\setlist[enumerate]{label=\textnormal{(\arabic*)}}
\crefname{condition}{condition}{conditions}
\crefname{formula}{formula}{formulas}
\apptocmd{\sloppy}{\hbadness 10000\relax}{}{}
\apptocmd{\sloppy}{\vbadness 10000\relax}{}{}
\definecolor{orcidcol}{HTML}{A6CE39}
\colorlet{Malte}{OliveGreen}
\colorlet{Philipp}{blue}
\renewcommand{\restriction}{\mathbin{\upharpoonright}}
\newcommand{\gbullet}{{\textcolor{gray}{\bullet}}}
\newcommand{\unitalsqcup}{\ensuremath\mathbin{\ooalign{$\sqcup$\crcr\hidewidth\tiny\raisebox{.2em}{$1$}\hidewidth}}}
\newcommand{\bigunitalsqcup}{\mathop{\ooalign{$\bigsqcup$\crcr\hidewidth\smaller\raisebox{-.05em}{$1$}\hidewidth}}}
\newcommand{\orcidlogo}{{\includegraphics[width=\fontcharht\font`l]{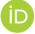}}}
\newcommand{\twofacedprod}[3][.7]{
  \setstackgap{L}{#1\baselineskip}\mathbin{\scriptsize{{% \raisebox{.7em}
    {
      \Vectorstack{{#2} {#3}}}}}}
\setstackgap{L}{\baselineskip}}
\newcommand{\ufprod}[2]{\mathop{{}_{#1}{\overrightarrow\Asterisk}_{#2} }}
\newcommand{\uttprod}[4]{
  \twofacedprod{ {}_{#1}\otimes_{#2}} {{}_{#3}\otimes_{#4}}
}
\newcommand{\uffprod}[4]{\twofacedprod[.9]{
    \ufprod{#1}{#2}}{\ufprod{#3}{#4}}
}
\newcommand{\ubprod}[2]{\mathop{{}_{#1}{\overleftarrow\Asterisk}_{#2} }} %deformed right free product
\newcommand{\ubfprod}[4]{\twofacedprod[.9]{
    \ufprod{#1}{#2}}{\ubprod{#3}{#4}}
 } 
\providecommand{\Asterisk}{\mathbin{\scalerel*{\ast}{\otimes}}}
\newcommand{\bool}{\mathbin{
    \scalerel*{\diamond}{\otimes}}   
  }
\newcommand{\boolboolprod}{\twofacedprod{\bool}{\bool}}
\def\Pg (#1,#2){
	\pgfpathcircle{\pgfpointxy{#1}{#2}}{\Pcolw em}
	\pgfsetfillcolor{lightgray}
	\pgfusepath{fill}
}
\def\Pgbullet(#1,#2){
  \pgfpathcircle{\pgfpointxy{#1}{#2}}{\Pcolw em}
  \pgfsetfillcolor{stroke,gray}
  \pgfusepath{fill}
}
\def\Pgs (#1,#2){
	\pgftransformshift{\pgfpointxy{#1}{#2}}
	\pgfpathmoveto{\pgfpoint{-\Pcolw em}{-\Pcolw em}}
	\pgfpathlineto{\pgfpoint{-\Pcolw em}{\Pcolw em}}
	\pgfpathlineto{\pgfpoint{\Pcolw em}{\Pcolw em}}
	\pgfpathlineto{\pgfpoint{\Pcolw em}{-\Pcolw em}}
	\pgfpathclose
	\pgfsetfillcolor{lightgray}
	\pgfusepath{fill}
	\pgftransformreset
}
\newcommand{\wsquare}{\qcol}
\newcommand{\bsquare}{\Qcol}
\newcommand{\Pbs}{\PQ}
\newcommand{\Pws}{\Pq}
\newcommand{\partxyoyoyxo}{\Partition{
    \Pblock 0.2 to 1.2:1,5
    \Pblock 0.2 to 0.8:2,3,4
    \Ppoint0.2 \Pb:2,3,5
    \Ppoint0.2 \Pw:1,4 }
}
\newcommand{\partyozxoyox}{\Partition{
    \Psingletons 0.2 to 0.6:2
    \Pblock 0.2 to 1.2:3,5
    \Pblock 0.2 to 0.9:1,4
    \Ppoint0.2 \Pb:1,3,4
    \Ppoint0.2 \Pw:2,5 }
}
\newcommand{\partxxoxoyxo}{\Partition{
    \Psingletons 0.2 to 0.8:4
    \Pblock 0.2 to 1.2:1,3,2,5
    \Ppoint0.2 \Pb:2,3,5
    \Ppoint0.2 \Pw:1,4 }
}
\newcommand{\partxyoyxo}{\Partition{
    \Pblock 0.2 to 1.2:1,4
    \Pblock 0.2 to 0.8:2,3
    \Ppoint0.2 \Pb:2,4
    \Ppoint0.2 \Pw:1,3 }
}
\newcommand{\partxyoxoxyo}{\Partition{
    \Pblock 0.2 to 1.2:1,3,4
    \Pblock 0.2 to 0.8:2,5
    \Ppoint0.2 \Pb:2,3,5
    \Ppoint0.2 \Pw:1,4 }
}
\newcommand{\partxyoxozyo}{\Partition{
    \Psingletons 0.2 to 0.6:4
    \Pblock 0.2 to 1.2:1,3
    \Pblock 0.2 to 0.9:2,5
    \Ppoint0.2 \Pb:2,3,5
    \Ppoint0.2 \Pw:1,4 }
}
\newcommand{\partxyoxoxzo}{\Partition{
    \Psingletons 0.2 to 0.8:2,5
    \Pblock 0.2 to 1.2:1,3,4
    \Ppoint0.2 \Pb:2,3,5
    \Ppoint0.2 \Pw:1,4 }
}
\newcommand{\partxyoxozwo}{\Partition{
    \Psingletons 0.2 to 0.8:2,4,5
    \Pblock 0.2 to 1.2:1,3
    \Ppoint0.2 \Pb:2,3,5
    \Ppoint0.2 \Pw:1,4 }
}
\newcommand{\nestbullet}{\Partition{ \Psingletons 0.2 to 1.2:1,3 \Psingletons 0.2 to 0.8:2 \Pline
    (1,1.2) (3,1.2) \Ppoint0.2 \Pg:1,3 \Ppoint0.2 \Pb:2 }}
\newcommand{\nestcirc}{\Partition{ \Psingletons 0.2 to 1.2:1,3 \Psingletons 0.2 to 0.8:2 \Pline
    (1,1.2) (3,1.2) \Ppoint0.2 \Pg:1,3 \Ppoint0.2 \Pw:2 }}
\newcommand{\nestcircbullet}{\Partition{ \Psingletons
    0.2 to 1.2:1,4 \Psingletons 0.2 to 0.8:2,3 \Pline (1,1.2) (4,1.2)
    \Ppoint0.2 \Pg:1,4 \Pline (2,0.8) (3,0.8) \Ppoint0.2 \Pw:2 \Ppoint0.2
    \Pb:3 }}
\newcommand{\crosscircbullet}{\Partition{
    \Psingletons 0.2 to 0.8:1,3
    \Psingletons 0.2 to 1.2:2,4
    \Pline (2,1.2) (4,1.2)
    \Pline (1,0.8) (3,0.8)
    \Ppoint0.2 \Pw:2
    \Ppoint0.2 \Pb:3
    \Ppoint0.2 \Pg:1,4
  }}
\newcommand{\crosscirccirc}{\Partition{
    \Psingletons
    0.2 to 0.8:1,3 \Psingletons 0.2 to 1.2:2,4 \Pline (2,1.2) (4,1.2) \Pline
    (1,0.8) (3,0.8) \Ppoint0.2 \Pw:2,3 \Ppoint0.2 \Pg:1,4
  }}
\newcommand{\crossbulletbullet}{\Partition{
    \Psingletons 0.2 to 0.8:1,3 \Psingletons 0.2 to 1.2:2,4 \Pline (2,1.2)
    (4,1.2) \Pline (1,0.8) (3,0.8) \Ppoint0.2 \Pb:2,3 \Ppoint0.2 \Pg:1,4
  }}
\newcommand{\sqpartxoyo}{\Partition{
    \Psingletons 0.2 to 1.2:1,2
    \Ppoint0.2 \Pbs:1,2 }
}
\newcommand{\sqpartxy}{\Partition{
    \Psingletons 0.2 to 1.2:1,2
    \Ppoint0.2 \Pws:1,2 }
}
\newcommand{\sqpartxyo}{\Partition{
    \Psingletons 0.2 to 1.2:1,2
    \Ppoint0.2 \Pbs:2 
    \Ppoint0.2 \Pws:1 }
}
\newcommand{\sqpartxyzyozox}{\Partition{
    \Pblock 0.2 to 1.2:1,6
    \Pblock 0.2 to 0.6:2,4
    \Pblock 0.2 to 0.9:3,5
    \Ppoint0.2 \Pgs:1,6
    \Ppoint0.2 \Pbs:4,5
    \Ppoint0.2 \Pws:2,3 }
}
\newcommand{\sqpartxyzxozy}{\Partition{
    \Pblock 0.2 to 1.2:1,4
    \Pblock 0.2 to 0.9:2,6
    \Pblock 0.2 to 0.6:3,5
    \Ppoint0.2 \Pgs:1,6
    \Ppoint0.2 \Pbs:4
    \Ppoint0.2 \Pws:2,3,5 }
}
\newcommand{\sqpartxyxyox}{\Partition{
    \Pblock 0.2 to 1.2:1,3,5
    \Pblock 0.2 to 0.8:2,4
    \Ppoint0.2 \Pgs:1,5
    \Ppoint0.2 \Pbs:4
    \Ppoint0.2 \Pws:2,3 }
}
\newcommand{\sqpartxyyoyx}{\Partition{
    \Pblock 0.2 to 1.2:1,5
    \Pblock 0.2 to 0.8:2,3,4
    \Ppoint0.2 \Pgs:1,5
    \Ppoint0.2 \Pbs:3
    \Ppoint0.2 \Pws:2,4 }
}
\newcommand{\sqpartxyyozozx}{\Partition{
    \Pblock 0.2 to 1.2:1,6
    \Pblock 0.2 to 0.8:2,3
    \Pblock 0.2 to 0.8:4,5
    \Ppoint0.2 \Pgs:1,6
    \Ppoint0.2 \Pbs:3,4
    \Ppoint0.2 \Pws:2,5 }
}
\newcommand{\sqpartxyxoyx}{\Partition{
    \Pblock 0.2 to 1.2:1,3,5
    \Pblock 0.2 to 0.8:2,4
    \Ppoint0.2 \Pgs:1,5
    \Ppoint0.2 \Pbs:3
    \Ppoint0.2 \Pws:2,4 }
}
\newcommand{\sqpartxyzzoyx}{\Partition{
    \Pblock 0.2 to 1.2:1,6
    \Pblock 0.2 to 0.9:2,5
    \Pblock 0.2 to 0.6:3,4
    \Ppoint0.2 \Pgs:1,6
    \Ppoint0.2 \Pbs:4
    \Ppoint0.2 \Pws:2,3,5 }
}
\newcommand{\sqpartxyzyzox}{\Partition{
    \Pblock 0.2 to 1.2:1,6
    \Pblock 0.2 to 0.6:2,4
    \Pblock 0.2 to 0.9:3,5
    \Ppoint0.2 \Pgs:1,6
    \Ppoint0.2 \Pbs:5
    \Ppoint0.2 \Pws:2,3,4 }
}
\newcommand{\sqpartxyxzyoz}{\Partition{
    \Pblock 0.2 to 1.2:1,3
    \Pblock 0.2 to 1.2:4,6
    \Pblock 0.2 to 0.8:2,5
    \Ppoint0.2 \Pgs:1,6
    \Ppoint0.2 \Pbs:5
    \Ppoint0.2 \Pws:2,3,4 }
}
\newcommand{\sqpartxyxozoyz}{\Partition{
    \Pblock 0.2 to 1.2:1,3
    \Pblock 0.2 to 1.2:4,6
    \Pblock 0.2 to 0.8:2,5
    \Ppoint0.2 \Pgs:1,6
    \Ppoint0.2 \Pbs:3,4
    \Ppoint0.2 \Pws:2,5 }
}
\newcommand{\sqnestbullet}{\Partition{ \Psingletons 0.2 to 1.2:1,3 \Psingletons 0.2 to 0.8:2 \Pline
    (1,1.2) (3,1.2) \Ppoint0.2 \Pgs:1,3 \Ppoint0.2 \Pbs:2 }}
\newcommand{\sqnestcirc}{\Partition{ \Psingletons 0.2 to 1.2:1,3 \Psingletons 0.2 to 0.8:2 \Pline
    (1,1.2) (3,1.2) \Ppoint0.2 \Pgs:1,3 \Ppoint0.2 \Pws:2 }}
\newcommand{\sqnestcircbullet}{\Partition{ \Psingletons
    0.2 to 1.2:1,4 \Psingletons 0.2 to 0.8:2,3 \Pline (1,1.2) (4,1.2)
    \Ppoint0.2 \Pgs:1,4 \Pline (2,0.8) (3,0.8) \Ppoint0.2 \Pws:2 \Ppoint0.2
    \Pbs:3 }}
\newcommand{\sqcrosscircbullet}{\Partition{
    \Psingletons 0.2 to 0.8:1,3
    \Psingletons 0.2 to 1.2:2,4
    \Pline (2,1.2) (4,1.2)
    \Pline (1,0.8) (3,0.8)
    \Ppoint0.2 \Pws:2
    \Ppoint0.2 \Pbs:3
    \Ppoint0.2 \Pgs:1,4
  }}
\newcommand{\sqcrosscirccirc}{\Partition{
    \Psingletons
    0.2 to 0.8:1,3 \Psingletons 0.2 to 1.2:2,4 \Pline (2,1.2) (4,1.2) \Pline
    (1,0.8) (3,0.8) \Ppoint0.2 \Pws:2,3 \Ppoint0.2 \Pgs:1,4
  }}
\newcommand{\sqcrossbulletbullet}{\Partition{
    \Psingletons 0.2 to 0.8:1,3 \Psingletons 0.2 to 1.2:2,4 \Pline (2,1.2)
    (4,1.2) \Pline (1,0.8) (3,0.8) \Ppoint0.2 \Pbs:2,3 \Ppoint0.2 \Pgs:1,4
  }}
\newtheorem{theorem}{Theorem}[section]
\newtheorem{lemma}[theorem]{Lemma}
\newtheorem{corollary}[theorem]{Corollary}
\theoremstyle{definition} % sentences appear in roman style (not italic)
\newtheorem{definition}[theorem]{Definition}
\newtheorem{remark}[theorem]{Remark}
\newtheorem{example}[theorem]{Example}
\newtheorem{observation}[theorem]{Observation}
\newtheorem{notation}[theorem]{Notation}
\theoremstyle{remark}
\title[Classification of multi-faced independences: combinatorial approach]{Towards a classification of multi-faced independences: \texorpdfstring{\\}{} a combinatorial approach}
\thanks{*The work of both authors was supported by German Research Foundation (DFG) grant no.\ 397960675. The work of MG was carried out as a postodoctoral researcher at Saarland University, during the tenure of an ERCIM `Alain Bensoussan' Fellowship Programme at NTNU Trondheim, as a guest researcher at Saarland University in the scope of the SFB-TRR 195, and as a postdoctoral scientific employee at University of Greifswald. The work of PV was partially carried out as a PhD student and scientific employee at University of Greifswald.
}
\author{Malte Gerhold$^{\MakeLowercase{a},*}$ \& Philipp Var\v{s}o$^{\MakeLowercase{b},*}$ }
\address{$^a$\href{https://orcid.org/0000-0003-4029-1108}{ 
   \orcidlogo\,0000-0003-4029-1108}\\Saarland University, Fachbereich Mathematik}
\address{$^b$\href{https://orcid.org/0000-0001-9199-2516}{ 
   \orcidlogo\,0000-0001-9199-2516}}
\date{}
\newsavebox{\hasseset}
\begin{document}

\begin{abstract}
  We determine a set of necessary conditions on a partition-indexed family of complex numbers to be the ``highest coefficients'' of a positive and symmetric multi-faced universal product; i.e.\ the product associated with a multi-faced version of noncommutative stochastic independence, such as bifreeness. The highest coefficients of a universal product are the weights of the moment-cumulant relation for its associated independence. We show that these conditions are \emph{almost} sufficient, in the sense that whenever the conditions are satisfied, one can associate a (automatically unique) symmetric universal product with the prescribed highest coefficients.  Furthermore, we give a quite explicit description of such families of coefficients, thereby producing a list of candidates that must contain all positive symmetric universal products. We discover in this way four (three up to trivial face-swapping) previously unknown moment-cumulant relations that give rise to symmetric universal products; to decide whether they are positive, and thus give rise to independences which can be used in an operator algebraic framework, remains an open problem.
\end{abstract}

\maketitle

\section{Introduction}
\label{sec:intro}

At the latest with Voiculescu's invention of \emph{freeness} \cite{Voi85}, it became apparent that the ``obvious'' extension of classical stochastic independence, \emph{tensor independence}, is not the only and not always the most suitable concept in inherently noncommutative situations. In fact, \emph{Boolean independence} (not yet under this name) has already featured much earlier in the work of von Waldenfels \cite{vWa73,vWa75}. Those ``noncommutative independences'' share many properties with classical stochastic independence and tensor independence. In particular, under the assumption of independence, mixed moments are uniquely determined and can be calculated from marginal moments (also giving rise to an associated convolution product for probability measures on the real line). Another interesting independence is \emph{monotone independence}, which was discovered by Muraki \cite{Mur01}; this is a non-symmetric independence relation. 

An extremely useful tool when dealing with random variables which have all moments are the corresponding \emph{cumulants}.
The theory of free cumulants, linearizing free additive convolution, was developed by Speicher \cite{Spe94}, see also the book by Nica and Speicher \cite{NiSp06}.\footnote{For a single variable, Voiculescu defined free cumulants and proved their uniqueness already in his seminal paper \cite{Voi85}.} Boolean cumulants were formalized by Speicher and Woroudi \cite{SpWo97}.
Understanding the monotone cumulants took a bit longer, many questions were answered by Hasebe and Saigo \cite{HaSa11}.
The problem in the monotone case is that independence is not in general characterized by vanishing of mixed cumulants.
This is directly related to the non-symmetric nature, as becomes apparent when interpreting moment-cumulant relations via exponential and logarithm maps, as is done in related but different settings by Manzel and Sch{\"u}rmann \cite{MaSc17} (Hopf algebraic) or Ebrahimi-Fard and Patras \cite{EFPa15} (shuffle-algebraic); non-zero mixed cumulants can appear in the Campbell-Baker-Hausdorff series. 

Since the work of Speicher \cite{Spe97}, Ben Ghorbal and Sch{\"u}rmann \cite{bGSc02}, and Muraki \cite{Mur02,Mur03}, we know that the five independence relations for noncommutative random variables, \emph{tensor}, \emph{free}, \emph{Boolean}, \emph{monotone} and \emph{antimonotone} independence, are indeed very special. For these independences, the joint distribution of independent random variables is obtained from the marginal distributions by means of a ``universal product'', i.e.\ a product operation which fulfills a number of natural conditions, including associativity and \emph{universality} (i.e.\ in a specific sense not dependent on the concrete realization of the noncommutative random variables) and a ``factorization for length 2''-condition; and they are the only ones with this property.\footnote{Speicher \cite{Spe97} proved that there are only three \emph{universal calculation rules for mixed moments} in the symmetric case. Ben Ghorbal and Sch{\"u}rmann \cite{bGSc02} axiomatized independences via universal products and showed equivalence to universal calculation rules. Muraki \cite{Mur02,Mur03} extended the results to the non-symmetric setting.} Replacing that ``factorization for length 2''-condition by a positivity condition, a decade later, Muraki \cite{Mur13} proved a similar result with a much simpler proof, while at the same time using a much better motivated assumption, namely that the product operation restricts to a product operation for states on augmented $*$-algebras.\footnote{In the purely algebraic context, i.e.\ without positivity, Muraki's classification was slightly extended by Gerhold and Lachs in \cite{GLa15}, showing that there is a non-symmetric deformation of Boolean independence.} This kind of positivity is also the right condition to study quantum L{\'e}vy processes on dual groups in the sense of Ben Ghorbal and Sch{\"u}rmann \cite{bGSc05}, see also \cite{ScVo14}, where \emph{Schoenberg correspondence} between convolution semigroups of states and conditionally positive generators is proved in this context. In 2014, Voiculescu \cite{Voi14} introduced a new nontrivial extension of free independence, \emph{bifreeness}, for sequences of pairs of random variables, or \emph{pairs of faces} as Voiculescu called the general underlying framework. Taking up on this idea, more examples of \emph{2-faced} or, more generally, \emph{multi-faced} independences have been discovered \cite{Liu19,Liu18p,GuSk19,GHS20,G23}. The general theory of \emph{multi-faced universal products} from which those independences can be obtained was established by Manzel and Sch{\"u}rmann \cite{MaSc17}. It turned out that not all of the examples fulfill the natural positivity condition. Positivity is still enough to assure Schoenberg correspondence in this generalized setting, see \cite{G21p}. In an effort to classify positive multi-faced universal products, two routes have been taken. In \cite{GHU23}, Gerhold, Hasebe, Ulrich completely classified 2-faced universal products which have a natural representation on the tensor product or the free product Hilbert space of the GNS spaces of the factors. In Var\v{s}o's PhD thesis \cite{Var21}, he proved that there are at most 12 two-faced universal products which fulfill additional assumptions of symmetry and a ``combinatorial'' moment cumulant relation (i.e.\ determined by a subset of all two-faced partitions, where more generally weights on two-faced partitions can appear).\footnote{In \cite{Var21}, it was also noticed for the first time the possibility that the moment cumulant relation of a positive universal product might not need to be of combinatorial form, which was indeed confirmed in \cite{GHU23} (cumulants are not discussed explicitly in \cite{GHU23}, but it is apparent that the universal products obtained as deformations can have non-0-1 highest coefficients).} In this article we present, simplify, and extend those results of \cite{Var21}. 

A single-faced independence can trivially be regarded as a two-faced independence, and every two-faced independence is a certain kind of mixture of two single-faced independences. However, neither do those two single-faced independences determine the two-faced independence, nor is it obvious that any combination of single-faced independences can be combined in any way to form a two-faced independence.\footnote{Note that the study of another kind of \emph{mixture} of single-faced independences was initiated by M{\l}otkowski \cite{Mlo04} and received again more attention after work Speicher and Wysozca\'{n}ski \cite{SpWy16} and Ebrahimi-Fard, Patras and Speicher \cite{EPS18} on the corresponding cumulants; this approach is closely related to \emph{graph products} of groups and the corresponding universal products are not associative binary operations.} The main result of this article is to present a family of two-faced symmetric universal products such that every positive symmetric two-faced universal product must belong to that family, we call them \emph{candidates}. This is achieved in three steps. First, we prove necessary conditions for a family of weights on ordered partitions to be the highest coefficients of a positive multi-faced universal product (\Cref{thm:properties-of-highest-coeffs});  second, we determine all permutation invariant weights (= weights on non-ordered partitions) which fulfill those properties (\Cref{cor:classification}), we call such weights here \emph{admissible}; third, we prove that admissible weights are always the highest coefficients of a (uniquely determined) symmetric multi-faced universal product (\Cref{thm:reconstruction}). The family of candidates consists of (identifying an independence with its underlying universal product, and disregarding the difference between a 2-faced independence and its image under swapping the faces)
\begin{itemize}
\item 2-faced continuous 1-parameter deformations of free, tensor and bifree independence (positivity is proved in \cite{GHU23}),
\item a tensor-free independence (positivity is not known),
\item a new free-free and a new tensor-tensor independence, different from the trivial ones, bifreeness, and their deformations (positivity is not known),
\item tensor-Boolean, free-Boolean and Boolean independence; positivity for those is also covered in \cite{GHU23}, for free-Boolean it was first shown by Liu \cite{Liu19} and for Boolean independence positivity is of course well-known.
\end{itemize}
We call the independences which are not realized in \cite{GHU23}, i.e.\ those whose positivity is yet unknown, \emph{exceptional}.

We prove many of the preliminary results for the general symmetric multi-faced case. \Cref{thm:properties-of-highest-coeffs}, where we find necessary conditions on weights to arise as highest coefficients of a universal product, is even formulated for not necessarily symmetric products and could be used as a starting point for a more general classification including multi-faced universal products based on monotone independence, such as for example bimonotone independence (of type II) as defined in \cite{G23,GHS20}.

It easily follows from the main result that there are no non-trivial positive and symmetric trace preserving universal products (\Cref{rem:trace-preserving}) and that tensor independence and bifreeness are the only two positive symmetric 2-faced independences which allow to define a convolution of probability measures on $\mathbb R^2$ (\Cref{rem:convolution-R2}).

Among our additional results, we characterize when a positive symmetric multi-faced universal product is unit preserving (\Cref{prop:unit-preserving}), i.e.\ when it can be defined consistently for arbitrary unital algebras (in the other cases, the product operation is only defined for linear functionals on augmented algebras). This is indeed the case for the three continuous families and the four (three up to swapping the faces) exceptional cases.  Furthermore, we establish a simplified mixed moment formula for the special \emph{combinatorial} case where the highest coefficients are only 0 or 1, so that the moment cumulant relation is simply governed by a specific set of partitions (\Cref{thm:combinatorial-mixed-moments}). 

The outline of the article is as follows. In \Cref{sec:prelims,sec:universal-products,sec:partitions}, we introduce the basic concepts, in particular multi-faced universal products and multi-faced partitions. In \Cref{sec:necessary-conditions} we prove the necessary conditions for a family of weights to be the highest coefficients of a positive multi-faced universal product (symmetric or not). In \Cref{sec:classification} we show that those necessary conditions allow us to obtain a concrete list of candidates for symmetric and positive two-faced universal products. In \Cref{sec:moment-cumulant-relations} we give an introduction to Manzel and Sch{\"u}rmann's cumulant theory, adapted to the relevant special case of symmetric multi-faced independences.
In \Cref{sec:reconstruction} we prove, using cumulants, that in the symmetric case the conditions exhibited in \Cref{sec:necessary-conditions} are sufficient to reconstruct a universal product in the algebraic sense (with a simplified formula in the combinatorial case), but it remains open whether these universal products are automatically positive. 
Finally, we characterize in \Cref{unit-preserving} which universal products in our list are unit preserving.
In \Cref{sec:outlook} we name four tasks which have to be completed in order to achieve a complete classification of positive multi-faced universal products.

A comparison between this article and corresponding results in Var\v{s}o's PhD thesis \cite{Var21} can be found in \Cref{sec:comparison}.

\section{Preliminaries and notation}
\label{sec:prelims}

We will have to deal a lot with tuples of all kinds, so we introduce some useful notation. Let $X$ and $Y$ be arbitrary sets. For any natural number $n$, denote by $[n]$ the set $\{1,\ldots, n\}$. For an $n$-tuple $\mathbf t=\bigl(\mathbf t(1),\ldots, \mathbf t(n)\bigr)\in X^n$ and a subset $I=\{i_1<\ldots < i_k\}\subset[n]$, we define the \emph{restricted tuple} $\mathbf t\restriction I:=\bigl(\mathbf t(i_1),\ldots,\mathbf t(i_k)\bigr)$. Two tuples $\mathbf t\in X^n, \mathbf s\in Y^n$ of the same length may be combined to form the tuple $\mathbf t\times \mathbf s\in (X\times Y)^n$ with $(\mathbf t\times \mathbf s)(i)=\bigl(\mathbf t(i),\mathbf s(i)\bigr)$, and conversely, every tuple in $(X\times Y)^n$ is of that form. The set of $n$-tuples of arbitrary length $n$ is denoted $X^*=\bigcup_{n\in\mathbb N_0}X^{n}$. When a set $X$ does not carry any multiplicative structure, we might use the \emph{word notation}, $\mathbf t(1)\cdots \mathbf t(n):=\bigl(\mathbf t(1),\ldots,\mathbf t(n)\bigr)\in X^{n}$. The entries of a tuple $\mathbf t$ might be written $t_i$ instead of $\mathbf t(i)$ from time to time; or we might use $\mathbf t$ as a shorthand for $(t_1,\ldots, t_n)$ without further comment when the $t_i$ have been around before.

An \emph{algebra} means a complex associative algebra, not necessarily unital. The \emph{free product} of algebras $A_1,A_2$ is denoted $A_1\sqcup A_2$, reminding of the fact that this is the coproduct in the category of algebras: for arbitrary algebra homomorphisms $h_i\colon A_i\to B$, there is a unique algebra homomorphism $h_1\sqcup h_2\colon A_1\sqcup A_2\to B$ with $h_1\sqcup h_2\restriction A_i=h_i$. We use the same symbol $\sqcup$ to denote the canonical homomorphism $h_1\sqcup h_2\colon A_1\sqcup A_2\to B_1\sqcup B_2$ when $h_i\colon A_i\to B_i$, it should always be clear from the context which codomain is meant.

For a vector space $V$, we denote by $T_0(V)=\bigoplus _{n\in\mathbb N}V^{\otimes n}$ the (non-unital) free algebra over $V$. We will identify $T_0(V_1\oplus V_2)=T_0(V_1)\sqcup T_0(V_2)$ without further commenting. Furthermore, $T_0(V_1)\oplus T_0(V_2)$ is identified with the corresponding subspace of $T_0(V_1\oplus V_2)$ and linear functionals $\psi$ on $T_0(V_1)\oplus T_0(V_2)$ are identified with linear functionals $T_0(V_1\oplus V_2)$ by extending them as the 0-functional to the canonical complement, i.e.\
\[\psi(v_1\otimes\cdots\otimes v_k):=
  \begin{cases}
    \psi(v_1\otimes\cdots\otimes v_k) & \text{if $\forall i:v_i\in V_1$  or  $\forall i:v_i\in  V_2$,}\\
    0 & \text{if $\exists i,j:v_i\in  V_1,v_j\in V_2$.}
  \end{cases}
\]
In particular, this convention applies to the direct sum of two linear functionals $\psi_i\colon T_0(V_i)\to\mathbb C$, i.e.\ we identify $\psi_1\oplus\psi_2$ with the linear functional on $T_0(V_1\oplus V_2)$ given by 
\begin{align}\label{eq:direct-sum-extended}
  \psi_1\oplus\psi_2(v_1\otimes\cdots\otimes v_k)=
  \begin{cases}
    \psi_1(v_1\otimes\cdots\otimes v_k) & \text{if $\forall i:v_i\in V_1$,}\\
    \psi_2(v_1\otimes\cdots\otimes v_k) & \text{if $\forall i:v_i\in  V_2$,}\\
    0 & \text{if $\exists i,j:v_i\in  V_1,v_j\in V_2$.}
  \end{cases}
\end{align}
The unital free algebra is denoted $T(V)=\bigoplus _{n\in\mathbb N_0}V^{\otimes n}$, and this unital algebra is the unitization of $T_0(V)$. 

For the rest of this article, if not explicitly mentioned otherwise, $\mathcal F$ denotes a fixed finite set, whose elements we call \emph{faces} or \emph{colors}. We could of course assume $\mathcal F=[m]$ for $m\in\mathbb N$, but since there will be a lot of integers around, we prefer to use more abstract symbols. We mostly use squared symbols such as $\wsquare,\bsquare$ to denote arbitrary elements of $\mathcal F$. If there are exactly two faces, we assume $\mathcal F=\{\wcol,\bcol\}$.    

A multi-faced (or $\mathcal F$-faced\footnote{We will usually write multi-faced instead of $\mathcal F$-faced. Nevertheless, use of the term always refers to the same fixed set of faces $\mathcal F$.}) algebra is an algebra $A$ that is freely generated by given subalgebras $A^\bsquare$, $\bsquare\in\mathcal F$ (the \emph{faces} of $A$), i.e.\ the canonical algebra homomorphism $\bigsqcup_{\bsquare\in\mathcal F} A^\bsquare\to A$ is an isomorphism; this is indicated by writing $A=\bigsqcup_{\bsquare\in\mathcal F} A^\bsquare$.  A \emph{multi-faced algebra homomorphism} is an algebra homomorphism $h\colon A\to B$ between multi-faced algebras $A,B$ with $h(A^\bsquare)\subset B^\bsquare$. We consider the free product of multi-faced algebras again a multi-faced algebra with faces $(A\sqcup B)^\bsquare:=A^\bsquare\sqcup B^\bsquare$.  Note that the free product of multi-faced algebras is the coproduct in the category $\mathrm{Alg}_{\mathcal F}$ of multi-faced algebras with multi-faced algebra homomorphisms, i.e.\ for every pair of multi-faced algebra homomorphisms $h_i\colon A_i\to B$ there is a unique multi-faced algebra homomorphism $h_1\sqcup h_2\colon A_1\sqcup A_2\to B$ restricting to $h_i$ on $A_i$, respectively for $i=1,2$. 

A multi-faced $*$-algebra is a multi-faced algebra with an involution such that each face is a $*$-subalgebra. Of course, the free product of multi-faced $*$-algebras is again a multi-faced $*$-algebra in the obvious way and the free product of multi-faced $*$-homomorphisms is a $*$-homomorphism.

We say that a linear functional $\varphi\colon A\to\mathbb C$ defined on a multi-faced $*$-algebra is a \emph{restricted state} if its unital extension to the unitization of $A$ is a state (or, equivalently, positive).

\section{Universal products}
\label{sec:universal-products}

\begin{definition}[Cf.\ {\cite[Rem.~3.4]{G21p}}]\label{def:universal-product}
  A \emph{multi-faced universal product} is a binary product operation for linear functionals on multi-faced algebras (with an a priori fixed set of faces $\mathcal F$) which associates with functionals $\varphi_1,\varphi_2$ on multi-faced algebras $A_1,A_2$, respectively, a functional $\varphi_1\odot\varphi_2$ on $A_1\sqcup A_2$ such that
  \begin{itemize}
  \item $(\varphi_1\circ h_1)\odot(\varphi_2\circ h_2)=(\varphi_1\odot\varphi_2)\circ (h_1\sqcup h_2)$ for all multi-faced algebra homomorphisms $h_i\colon B_i\to A_i$ (universality)
  \item $(\varphi_1\odot\varphi_2)\odot\varphi_3 = \varphi_1\odot(\varphi_2\odot\varphi_3)$ (associativity)
  \item $(\varphi_1\odot\varphi_2)\restriction A_i=\varphi_i$ (restriction property).
  \end{itemize}
  The product is called
  \begin{itemize}
  \item \emph{symmetric} if $\varphi_1\odot\varphi_2=\varphi_2\odot\varphi_1$,
  \item \emph{positive} if the product of restricted states on multi-faced $*$-algebras is a restricted state on the free product $*$-algebra.
  \end{itemize}
\end{definition}

Note that we made several implicit identifications between isomorphic free products in the last definition. For a more detailed discussion see \cite{G21p}.

Universal products have been invented to encode independences. In the single-faced case, this has been worked out by Ben Ghorbal and Sch{\"u}rmann \cite{bGSc02}. The multi-faced case is covered by \cite{MaSc17} together with the categorical considerations from \cite{Fra06} and \cite{GLS22}. In a nutshell, given a universal product $\odot$ and a linear functional $\Phi$ on an algebra  $\mathcal A$, algebra homomorphisms $j_\kappa\colon B_\kappa\to\mathcal A$, $\kappa\in[k]$, defined on multi-faced algebras $B_\kappa$, are called $\odot$-independent w.r.t\ $\Phi$ if
\[\Phi\circ (j_1\sqcup\cdots \sqcup j_k) = (\Phi\circ j_1) \odot \cdots  \odot (\Phi\circ j_k),\]
or, in other words, if the joint distribution of the noncommutative random variables $j_\kappa$ coincides with the universal product of their marginal distributions. This induces the usual definitions of independence for $\mathcal F$-tuples of elements or of subalgebras of $\mathcal A$. 
In the remainder of this article, we will not work with the independences themselves, but solely with the underlying universal products, so we refrain from giving more details here.

We will make extensive use of the ``Central Structural Theorem'' for universal products \cite[Theorem~4.2]{MaSc17}. Before we present a simplified version of it adapted to the special case of positive multi-faced universal products, we introduce some more notation and give an example.  

Let $A_1,\ldots, A_k$ be multi-faced algebras and $A=A_1\sqcup \cdots \sqcup A_k$ (i.e.\ we identify the $A_i$ with subalgebras of their free product). For $\mathbf s=\mathbf b\times \mathbf f\in ([k]\times \mathcal F)^n$, we denote
\[A^{\mathbf s}:=\left\{a_1\cdots a_n\in A : a_i\in A_{\mathbf b(i)}^{\mathbf f(i)} \right\}.\]
Note that the $A^{\mathbf s}$ are not necessarily pairwise disjoint.\footnote{Indeed, if $\mathbf s(i)=\mathbf s(i+1)$, then $A^{\mathbf s}\subseteq A^{\mathbf s \restriction \{1,\ldots, i-1,i+1,\ldots,n\}}$ because $A_{\mathbf b(i)}^{\mathbf f(i)}$ is a subalgebra of $A$. A typical way to deal with this is to only consider alternating sequences, i.e.\ demand $\mathbf s(i)\neq\mathbf s(i+1)$ for all $i\in [n-1]$. However, it does not cause problems to formulate the subsequent statements for all $A^{\mathbf s}$, so we decided to do so.} Elements of $[k]^n$ are referred to as \emph{block structures} and elements of $\mathcal F ^n$ are called \emph{face structures}.

For $\mathbf s=\mathbf b\times\mathbf f\in ([k]\times \mathcal F )^n$, put $\beta_\kappa(\mathbf s):=\{\ell\in[n]:\mathbf b(\ell)=\kappa\}$. We call a set partition $\pi$ of $[n]$  \emph{adapted to $\mathbf s$}, and write $\pi\prec \mathbf s$, if the following two conditions are met:
\begin{itemize}
\item each block $\beta\in\pi$ is contained in some $\beta_\kappa(\mathbf s)$; in other words, $\pi$ is a refinement of the set partition $\sigma=\{\beta_1(\mathbf s),\ldots ,\beta_k(\mathbf s)\}$ (to adhere strictly to the usual definition of set partition, empty blocks should be removed from $\sigma$)
\item if $\mathbf s(i)=\mathbf s(i+1)$, then $i,i+1$ belong to the same block of $\pi$. 
\end{itemize}
Note that, obviously, $\sigma$ is the maximal partition (w.r.t.\ refinement order) adapted to $\mathbf s$.

Given a multi-faced universal product $\odot$, we define its \emph{linearized part} as
\[\varphi_1\boxdot\cdots\boxdot\varphi_k(a):=\left.\frac{\partial^k}{\partial t_1\cdots \partial t_k}(t_1\varphi_1)\odot\cdots\odot(t_k\varphi_k)(a)\right\vert_{\mathbf{t}=0}\]
(that this expression is well-defined should be understood as part of the following theorem).

\begin{example}\label{exp:deformed-tensor}
  The deformed tensor product $\odot=\uttprod{}{\zeta}{}{\phantom{\zeta}}=\uttprod{\zeta}{\zeta}{1}{1}$ according to \cite[Proposition 5.10(1) and Example 5.7]{GHU23}\footnote{In the notation of \cite{GHU23}, $\wcol$ is face $(1)$ and $\bcol$ is face $(2)$.}, $\zeta\in\mathbb T$, can be calculated for arbitrary 2-faced algebras $A_\kappa$, linear functionals $\varphi_\kappa\colon A_\kappa\to\mathbb C$ ($\kappa\in\{1,2\}$), and elements $a=a^{\wcol}_1a_2^{\wcol}a_1^{\bcol}a_2^{\bcol}\in A^{1212\times \wcol\wcol\bcol\bcol}$ as follows, abbreviating $\langle b\rangle:=\varphi_\kappa(b)$ for $b\in A_\kappa$: 
  \begin{align*}
\MoveEqLeft\varphi_1 \odot\varphi_2(a) \\
    &=\langle a_{1}^{\wcol} \rangle \langle a_{2}^{\wcol}\rangle  \langle a_{1}^{\bcol}\rangle \langle a_{2}^{\bcol}\rangle + \bigl( \langle a_{1}^{\wcol}a_{1}^{\bcol}\rangle-\langle a_{1}^{\wcol}\rangle\langle a_{1}^{\bcol}\rangle\bigr)\langle a_{2}^{\wcol}\rangle \langle a_{2}^{\bcol}\rangle+ \langle a_{1}^{\wcol}\rangle\langle a_{1}^{\bcol}\rangle \bigl(\langle a_{2}^{\wcol}a_{2}^{\bcol}\rangle-\langle a_{2}^{\wcol}\rangle\langle a_{2}^{\bcol}\rangle\bigr) \\
    &\quad + \overline{\zeta} \bigl( \langle a_{1}^{\wcol}a_{1}^{\bcol}\rangle-\langle a_{1}^{\wcol}\rangle\langle a_{1}^{\bcol}\rangle\bigr)\bigl( \langle a_{2}^{\wcol}a_{2}^{\bcol}\rangle-\langle a_{2}^{\wcol}\rangle\langle a_{2}^{\bcol}\rangle\bigr)\\
    &=\overline\zeta\cdot \langle a_{1}^{\wcol}a_{1}^{\bcol}\rangle\langle a_{2}^{\bcol}a_{2}^{\wcol}\rangle
      + (1-\overline\zeta)\cdot \langle a_{1}^{\wcol}a_{1}^{\bcol}\rangle\langle a_{2}^{\bcol}\rangle\langle a_{2}^{\wcol}\rangle
      + (1-\overline\zeta)\cdot \langle a_{1}^{\wcol}\rangle\langle a_{1}^{\bcol}\rangle\langle a_{2}^{\bcol} a_{2}^{\wcol}\rangle
    \\
    &\quad - (1-\overline\zeta)\cdot \langle a_{1}^{\wcol}\rangle\langle a_{1}^{\bcol}\rangle\langle a_{2}^{\bcol}\rangle\langle a_{2}^{\wcol}\rangle
  \end{align*}
  Consequently, the linearized part is given by
  \[\varphi_1\boxdot\varphi_2(a) = \overline\zeta\cdot \varphi_1(a_{1}^{\wcol}a_{1}^{\bcol})\varphi_2( a_{2}^{\bcol}a_{2}^{\wcol}).\]  
\end{example}
  Note how the summands in the full expansion in \Cref{exp:deformed-tensor} correspond to partitions adapted to $\mathbf s$; the product element $a$ is divided into some sort of ``subproducts'' which are then evaluated in the appropriate $\varphi_\kappa$. This general pattern is made precise in the following theorem and allows to describe a universal product in terms of the complex coefficients appearing in each summand, which are independent of the involved linear functionals, algebras and algebra elements.

\begin{theorem}[{Adjusted and simplified from \cite[Th.~4.2, Rem.\ 4.3, 4.4]{MaSc17}}]\label{thm:MS:highest-coefficients}\ \newline  
  Let $\odot$ be a positive multi-faced universal product and $k\in\mathbb N$.  Then there are unique coefficients $\alpha^{\pi}_{\mathbf s}$, $\mathbf s\in([k]\times \mathcal F )^*$, $\pi\prec \mathbf s$, such that , for all
  linear functionals $\varphi_\kappa\colon A_\kappa\to \mathbb C$ on multi-faced algebras $A_\kappa$ $(\kappa\in[k])$ and all $a\in A^{\mathbf s}$,
  \begin{align}\label{eq:all-coefficients}
    \varphi_1\odot\cdots\odot\varphi_k(a)= \sum_{\pi\prec\mathbf s}\alpha_{\mathbf s}^{\pi}\cdot\prod_{\kappa\in[k]}\prod_{\substack{\beta \in\pi\\\beta \subset \beta _\kappa(\mathbf s)}}\varphi_\kappa\left(\mathop{\overrightarrow{\prod}}_{\ell\in \beta }
    a_\ell\right).
  \end{align}
  (The symbol $\mathop{\overrightarrow{\prod}}$ indicates that the product is to be taken in the same order as the factors $a_j$ appear in the product $a=a_1\cdots a_n$.)
  
  Putting $\alpha_{\mathbf s}:=\alpha_{\mathbf s}^{\sigma}$ ($\sigma$ the maximal partition adapted to $\mathbf s$), the linearized part is given by
  \begin{align}\label{eq:highest-coefficients}
    \varphi_1\boxdot\cdots\boxdot\varphi_k(a)= \alpha_{\mathbf s}\cdot\varphi_1\left(\mathop{\overrightarrow{\prod}}_{\mathbf b(\ell)=1}
    a_\ell\right)\cdots \varphi_k\left(\mathop{\overrightarrow{\prod}}_{\mathbf b(\ell)=k} a_\ell\right).
  \end{align}
  The $\alpha_\mathbf s^{\pi}$ are called \emph{coefficients} of $\odot$ and the $\alpha_{\mathbf s}$ are called \emph{highest coefficients} of $\odot$. 
\end{theorem}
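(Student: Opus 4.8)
The plan is to separate the genuinely new content of the statement — the linearized-part formula \eqref{eq:highest-coefficients} and the assertion that $\boxdot$ is well-defined — from the master expansion \eqref{eq:all-coefficients}, treating the existence of the coefficients $\alpha_{\mathbf s}^\pi$ as the core structural fact to be extracted from the axioms of \Cref{def:universal-product}. First I would set up the \emph{universal model}. Fix $k$ and a block--face structure $\mathbf s=\mathbf b\times\mathbf f\in([k]\times\mathcal F)^n$; for each $\kappa$ let $V_\kappa=\bigoplus_{\bsquare\in\mathcal F}V_\kappa^\bsquare$, where $V_\kappa^\bsquare$ has one basis vector $e_\ell$ for every $\ell$ with $\mathbf b(\ell)=\kappa$, $\mathbf f(\ell)=\bsquare$, and put $A_\kappa=T_0(V_\kappa)$. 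The generic word $g=e_1\otimes\cdots\otimes e_n$ lies in $A^{\mathbf s}$. For arbitrary functionals $\psi_\kappa$ on multi-faced algebras $B_\kappa$ and arbitrary $b_\ell\in B_{\mathbf b(\ell)}^{\mathbf f(\ell)}$, the assignment $e_\ell\mapsto b_\ell$ extends to multi-faced homomorphisms $h_\kappa\colon A_\kappa\to B_\kappa$, and universality yields $\psi_1\odot\cdots\odot\psi_k(b_1\cdots b_n)=(\psi_1\circ h_1)\odot\cdots\odot(\psi_k\circ h_k)(g)$. Thus the value on any word is a single universal function of the marginal moments $\psi_\kappa(b_{i_1}\cdots b_{i_j})$ with $\{i_1,\dots,i_j\}\subset\beta_\kappa(\mathbf s)$, and the problem reduces to analysing $\odot$ on the universal model, where these moments are free parameters.

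Second, I would extract the homogeneity. Since $\odot$ returns a linear functional, the value is linear in $a$, hence additive in each letter $a_\ell$; and applying universality to the face-preserving rescaling $e_\ell\mapsto t_\ell e_\ell$ shows that scaling the moment with index set $T$ by $\prod_{i\in T}t_i$ multiplies the value by $t_1\cdots t_n$. Together these force the value to be multilinear of multidegree $(1,\dots,1)$ in the generators, so every monomial in the moments that can occur uses each index exactly once, i.e.\ corresponds to a partition $\pi$ of $[n]$ refining $\sigma$ — precisely a $\pi\prec\mathbf s$ — which is the shape of \eqref{eq:all-coefficients}. The main obstacle lives here: one must still show that the universal function is a \emph{finite} linear combination of such monomials and that inside each block the factors appear in the induced order $\mathop{\overrightarrow{\prod}}_{\ell\in\beta}a_\ell$ rather than some permutation of it. Finiteness and order-preservation do not follow from universality alone; they are the heart of the Central Structural Theorem and are where I expect the real work, using associativity and the restriction property (the standing positivity hypothesis is what places us squarely in the setting of \cite{MaSc17}). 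I would attack order-preservation by induction on $n$, using associativity to split off the last letter so that the ambient left-to-right order of $a_1\cdots a_n$ is transported intact into each marginal evaluation.

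Third, uniqueness is the self-contained part. On $A_\kappa=T_0(V_\kappa)$ the values $\varphi_\kappa(e_{i_1}\otimes\cdots\otimes e_{i_j})$ are independent parameters, and distinct partitions $\pi\prec\mathbf s$ give distinct monomials in them — a partition is recovered from the block set appearing as its moment arguments — so these monomials are linearly independent and the coefficients $\alpha_{\mathbf s}^\pi$ in \eqref{eq:all-coefficients} are uniquely determined.

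Finally, the linearized part follows formally. Substituting $t_\kappa\varphi_\kappa$ scales each factor indexed by $\kappa$ by $t_\kappa$, so the $\pi$-summand acquires $\prod_\kappa t_\kappa^{n_\kappa(\pi)}$, where $n_\kappa(\pi)$ is the number of blocks of $\pi$ inside $\beta_\kappa(\mathbf s)$; in particular $(t_1\varphi_1)\odot\cdots\odot(t_k\varphi_k)(a)$ is a polynomial in $t_1,\dots,t_k$, which is exactly the well-definedness claimed in the theorem. The operator $\partial_{t_1}\cdots\partial_{t_k}\vert_{\mathbf t=0}$ annihilates every monomial except $t_1\cdots t_k$, and $\prod_\kappa t_\kappa^{n_\kappa(\pi)}=t_1\cdots t_k$ forces $n_\kappa(\pi)=1$ for all $\kappa$, i.e.\ $\pi=\sigma$. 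What survives is $\alpha_{\mathbf s}^\sigma=\alpha_{\mathbf s}$ times the maximal-partition term, which is \eqref{eq:highest-coefficients}.
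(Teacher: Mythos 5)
Your uniqueness argument and your derivation of \eqref{eq:highest-coefficients} from \eqref{eq:all-coefficients} are sound and essentially the paper's own: the paper proves uniqueness by constructing functionals with $\prod_\kappa\varphi_\kappa(\cdots)=\delta_{\pi,\rho}$ (your linear-independence-of-monomials phrasing is equivalent), and it obtains the linearized part by noting the $\sigma$-summand is the only one linear in each $\varphi_\kappa$, which is your $\partial_{t_1}\cdots\partial_{t_k}\vert_{\mathbf t=0}$ computation made implicit. The genuine gap is in the heart of the statement, the existence of the expansion \eqref{eq:all-coefficients}. Your proposed self-contained attack on order-preservation --- ``induction on $n$, using associativity to split off the last letter'' --- is not a viable route: associativity, universality and the restriction property do \emph{not} by themselves exclude wrong-ordered terms. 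This is precisely where positivity enters, and in the paper it enters only through the citation of \cite[Rem.~4.4]{MaSc17}, which shows that for a product of restricted states all wrong-ordered coefficients vanish. Your homogeneity argument cannot close this hole either: a wrong-ordered monomial such as $\varphi_1(a_3a_1)\varphi_2(a_2)$ has exactly the same multidegree $(1,\dots,1)$ as its right-ordered counterpart, so multidegree considerations cannot see the ordering; moreover, that argument tacitly assumes the universal function of the moments is a polynomial, which is itself part of what \cite[Th.~4.2]{MaSc17} delivers and cannot be extracted from universality plus rescaling.

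Second, even granting that you fall back on \cite{MaSc17} for the core structural fact (as the paper legitimately does, the theorem being ``adjusted and simplified'' from there), your proposal omits the actual adjustment the paper must perform: by \cite[Rem.~4.3]{MaSc17}, the formula of \cite[Th.~4.2]{MaSc17} applies only to \emph{alternating} tuples $\mathbf s$, i.e.\ those with $\mathbf s(i)\neq\mathbf s(i+1)$. For non-alternating $\mathbf s$ the coefficients must first be \emph{defined}, by merging repeated entries into an alternating tuple $\widetilde{\mathbf s}$ and setting $\alpha_{\mathbf s}^{\pi}:=\alpha_{\widetilde{\mathbf s}}^{\widetilde\pi}$, and the validity of \eqref{eq:all-coefficients} for such $\mathbf s$ must then be verified; the paper does this by an explicit universality computation with homomorphisms $h_\kappa\colon B_\kappa\to A_\kappa$ sending free generators $x_i$ to the subproducts $a_{r_{i-1}+1}\cdots a_{r_i}$. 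This extension step is the bulk of the paper's written proof and has no counterpart in your proposal; without it (or without proving the structural theorem from scratch, which your sketch does not do), the statement is established only for alternating $\mathbf s$.
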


\begin{proof}
 First assume that $\mathbf s\in ([k]\times\mathcal F )^n$ is \emph{alternating}, i.e.\ $\mathbf s(i)\neq\mathbf s(i+1)$ for $i=1,\ldots, n-1$.  By \cite[Rem.~4.3]{MaSc17}, the formula given in \cite[Th.~4.2]{MaSc17} can be applied.
  For a positive universal product,  \cite[Rem.~4.4]{MaSc17} implies that there is only one summand for each $\pi\prec \mathbf s$, corresponding to the ``right-ordered coefficient'' (i.e.\ the $a_j$ are multiplied in the same order in which they appear as factors in $a$) associated with $\pi$ and $\mathbf s$, denoted $\alpha_\mathbf s^{\pi}$ in this article.

If $\mathbf s$ is not alternating, then we define $\alpha_{\mathbf s}^{\pi}:=\alpha_{\widetilde{\mathbf s}}^{\widetilde\pi}$, where  $\widetilde{\mathbf s}$ is the alternating tuple obtained from $\mathbf s$ merging repeating entries into one, and $\widetilde\pi$ the set partition adapted to $\widetilde s$ induced by $\pi$ in the obvious way. By universality it follows that \eqref{eq:all-coefficients} extends to all $\mathbf s\in ([k]\times\mathcal F )^*$; indeed, if $\widetilde{\mathbf s}=\widetilde{\mathbf b}\times \widetilde{\mathbf f}$ has length $m$ and
  \[a_{1},\ldots ,a_{r_1}\in A_{\widetilde{\mathbf b}(1)}^{\widetilde{\mathbf f}(1)},\quad a_{r_1+1},\ldots ,a_{r_2}\in A_{\widetilde{\mathbf b}(2)}^{\widetilde{\mathbf f}(2)},\quad\ldots,\quad a_{r_{m-1}+1},\ldots ,a_{r_1}\in A_{\widetilde{\mathbf b}(m)}^{\widetilde{\mathbf f}(m)} \] then for the multi-faced algebras $B_\kappa$ ($\kappa\in[k]$) which are freely generated by $x_i\in B_{\widetilde{\mathbf b}(i)}^{\widetilde{\mathbf f}(i)}$ and multi-faced homomorphisms $h_\kappa \colon B_\kappa\to A_\kappa$ defined by
  \[B_{\widetilde{\mathbf b}(i)}^{\widetilde{\mathbf f}(i)}\ni x_i\mapsto a_{r_{i-1}+1}\cdots a_{r_i}\in A_{\widetilde{\mathbf b}(i)}^{\widetilde{\mathbf f}(i)}\quad \text{for $i=1,\ldots,m$}\quad (r_0:=0),\]
  one finds that, using universality for the first equality,
  \begin{align*}
    \varphi_1\odot\cdots\odot\varphi_k(a)
    &=(\varphi_1\circ h_1) \odot\cdots\odot (\varphi_k\circ h_k) (x_1\cdots x_m)\\
    &=  \sum_{\widetilde\pi\prec\widetilde{\mathbf s}}\alpha_{\widetilde{\mathbf s}}^{\widetilde \pi}\cdot\prod_{\kappa\in[k]}\prod_{\substack{\widetilde \beta \in\widetilde \pi\\\widetilde \beta \subset \beta _\kappa(\widetilde {\mathbf s})}}(\varphi_\kappa\circ h_{\kappa})\left(\mathop{\overrightarrow{\prod}}_{\widetilde \ell\in\widetilde \beta }
    x_{\widetilde \ell}\right)\\
    &= \sum_{\pi\prec\mathbf s}\alpha_{\mathbf s}^{\pi}\cdot\prod_{\kappa\in[k]}\prod_{\substack{\beta \in\pi\\\beta \subset \beta _\kappa(\mathbf s)}}\varphi_\kappa\left(\mathop{\overrightarrow{\prod}}_{\ell\in \beta }
    a_\ell\right).
  \end{align*}
  
  For each $\rho\prec \mathbf s$, one can easily construct  multi-faced algebras and linear functionals $\varphi_\kappa\colon A_\kappa\to\mathbb C$ and an element $a\in A^{\mathbf s}$ in such a way that
  \[\prod_{\kappa\in[k]}\prod_{\substack{\beta \in\pi\\\beta \subset \beta _\kappa(\mathbf s)}}\varphi_\kappa\left(\mathop{\overrightarrow{\prod}}_{\ell\in \beta } a_\ell\right)=\delta_{\pi,\rho},\]
  and, thus, $\alpha^{\rho}_{\mathbf s}=(\varphi_1\odot\cdots\odot\varphi_k)(a)$. This shows uniqueness of the coefficients.

  \Cref{eq:highest-coefficients} follows from \Cref{eq:all-coefficients} because the summand corresponding to the maximal partition $\sigma$ is the only one which is linear in each $\varphi_\kappa$. 
\end{proof}

Obviously, the family of coefficients determines the universal product. In fact, it follows from the cumulant theory developed in \cite{MaSc17} that the highest coefficients alone are already enough to determine the universal product. We will come back to this in \Cref{sec:moment-cumulant-relations}.

To end this section, we show that the highest coefficients can be recovered from the linearized part of a universal product using only linear functionals of a particularly well-behaved kind.

\begin{definition}
  A restricted state $\varphi\colon A\to\mathbb C$ on a multi-faced algebra $A$ is called \emph{trivially multi-faced} if for all  $\wsquare,\bsquare\in\mathcal F$ there exists a $*$-isomorphism $a^{\wsquare}\mapsto a^{\bsquare}\colon  A^{\wsquare}\to A^{\bsquare}$ with $\varphi (ab^{\wsquare}c)=\varphi(a b^{\bsquare} c)$ for all $b^{\wsquare}\in A^{\wsquare}$ and all $a,c$ in the unitization of $A$.
\end{definition}

\begin{lemma}\label{lem:trivially-multi-faced}
  For every $\mathbf s=\mathbf b\times\mathbf f\in ([k]\times\mathcal F )^*$, there are trivially multi-faced restricted states $\varphi_\kappa$ on multi-faced $*$-algebras $A_\kappa$ ($\kappa\in[k]$) and an element $a\in A^{\mathbf s}$ with
  $\varphi_\kappa\bigl(\mathop{\overrightarrow{\prod}}_{\mathbf b(\ell)=\kappa} a_\ell\bigr)=1$ for all $\kappa\in[k]$; in particular, for a positive multi-faced universal product  $\odot$ it follows that $\alpha_{\mathbf s}=\varphi_1\boxdot\cdots\boxdot\varphi_k(a)$.
\end{lemma}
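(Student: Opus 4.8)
The plan is to realize $\alpha_{\mathbf s}$ using states that cannot ``see'' the multi-faced structure at all, so that the evaluation collapses to a trivial single-faced computation. Concretely, for each $\kappa\in[k]$ I would take $A_\kappa$ to be the multi-faced $*$-algebra $\bigsqcup_{\bsquare\in\mathcal F}\mathbb C\,e_\bsquare$ whose $\bsquare$-face is a copy of $\mathbb C$ spanned by a self-adjoint idempotent $e_\bsquare$, and I would let $\varphi_\kappa$ be the restricted state whose unital extension is the character $\chi_\kappa$ determined on the unitization of $A_\kappa$ by $e_\bsquare\mapsto 1\in\mathbb C$ for every $\bsquare$. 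Since $\mathbb C$ is commutative, the universal property of the free product produces such a unital homomorphism $\chi_\kappa$; as the generators $e_\bsquare$ are self-adjoint and map to real numbers, $\chi_\kappa$ respects the involution, so $\chi_\kappa(x^*x)=|\chi_\kappa(x)|^2\ge 0$ and $\chi_\kappa$ is a state. Hence $\varphi_\kappa:=\chi_\kappa\restriction A_\kappa$ is indeed a restricted state.

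Next I would verify that each $\varphi_\kappa$ is trivially multi-faced. The required $*$-isomorphism $A_\kappa^\wsquare\to A_\kappa^\bsquare$ is simply the linear extension of $e_\wsquare\mapsto e_\bsquare$, which is manifestly a $*$-isomorphism of the two copies of $\mathbb C$. For $b^\wsquare=\lambda e_\wsquare$ with image $b^\bsquare=\lambda e_\bsquare$ and arbitrary $a,c$ in the unitization, multiplicativity of $\chi_\kappa$ gives $\chi_\kappa(a\,b^\wsquare c)=\chi_\kappa(a)\,\lambda\,\chi_\kappa(c)=\chi_\kappa(a\,b^\bsquare c)$, using $\chi_\kappa(e_\wsquare)=1=\chi_\kappa(e_\bsquare)$; this is exactly the defining identity for trivial multi-facedness. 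It is precisely here, in controlling $\varphi_\kappa(a\,b^\wsquare c)$ simultaneously for all $a,c$ in the unitization, that the proof is genuinely delicate, and choosing a \emph{multiplicative} functional is what makes this the only nontrivial point: the face-swapping identity reduces to the trivial fact that $\chi_\kappa$ takes the same value on $e_\wsquare$ and $e_\bsquare$.

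Finally I would take $a:=a_1\cdots a_n\in A^{\mathbf s}$ with $a_i:=e_{\mathbf f(i)}\in A_{\mathbf b(i)}^{\mathbf f(i)}$. For each $\kappa$, the product $\mathop{\overrightarrow{\prod}}_{\mathbf b(\ell)=\kappa}a_\ell$ is a word in the idempotents $e_{\mathbf f(\ell)}$ of $A_\kappa$, so the character $\chi_\kappa$ collapses it to $1$, giving $\varphi_\kappa\bigl(\mathop{\overrightarrow{\prod}}_{\mathbf b(\ell)=\kappa}a_\ell\bigr)=1$ (with the empty product, when $\beta_\kappa(\mathbf s)=\emptyset$, likewise handled by $\chi_\kappa(\mathbf 1)=1$). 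The ``in particular'' is then immediate: substituting these states and this $a$ into the linearized-part formula \eqref{eq:highest-coefficients} of \Cref{thm:MS:highest-coefficients} makes every factor on the right-hand side equal to $1$, leaving $\varphi_1\boxdot\cdots\boxdot\varphi_k(a)=\alpha_{\mathbf s}$.
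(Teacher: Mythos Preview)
Your proof is correct and is essentially identical to the paper's: the paper also takes $A_\kappa^{\bsquare}=\mathbb C$, $A_\kappa=\bigsqcup_{\bsquare\in\mathcal F}A_\kappa^{\bsquare}$, and $\varphi_\kappa=\bigsqcup_{\bsquare\in\mathcal F}\mathrm{id}$ (i.e.\ exactly your character $\chi_\kappa$ restricted to $A_\kappa$), with $a_\ell$ the unit of the appropriate copy of $\mathbb C$. You simply spell out in more detail why this functional is a state and why it is trivially multi-faced, which the paper asserts without elaboration.
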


\begin{proof}
  Define $A_\kappa^{\bsquare}:=\mathbb C$ and $A_\kappa:=\bigsqcup_{\bsquare\in\mathcal F}A_\kappa^{\bsquare}$. Then $\varphi_\kappa=\bigsqcup_{\bsquare\in\mathcal F}\mathrm{id}\colon A_\kappa\to \mathbb C$ is a state, in particular a restricted state, and trivially multi-faced. Put $a_{\kappa}^{\bsquare}:= 1$ for all $\kappa\in[k]$ and all $\bsquare\in\mathcal F$.
  Now it is easy to see that $\varphi_\kappa(a_{\kappa}^{\bsquare_1}\cdots a_{i}^{\bsquare_m})=1$ for all $m\in\mathbb N$ and $\bsquare_\mu\in\mathcal F$ ($\mu\in[m]$). With $a:=a_{\mathbf b(1)}^{\mathbf f(1)}\cdots a_{\mathbf b(n)}^{\mathbf f(n)}$ the first claim is obvious and the second claim follows from \Cref{thm:MS:highest-coefficients}.
\end{proof}

\section{Partitions}\label{sec:partitions}

In general, a \emph{multi-faced set} is a set $S$ together with a map ${\mathbf{f}}\colon S\to\mathcal F$, the \emph{face structure} of $S$. The subsets $S^\bsquare:={\mathbf{f}}^{-1}(\{\bsquare\})$ are called the faces of $S$. A multi-faced subset of $S$ is just a subset of the underlying set viewed as a multi-faced set with respect to the restricted face structure.

In this article, we only deal with multi-faced sets whose underlying set $S$ is finite and totally ordered; these properties are implicitly assumed whenever we write about multi-faced sets in the following. 

Any word ${\mathbf{f}}={\mathbf{f}}(1)\cdots {\mathbf{f}}(n)\in \mathcal F^*$ defines face structure on $[n]$, $k\mapsto {\mathbf{f}}(k)$, (which we identify with the word ${\mathbf{f}}$) thus turning $[n]$ into a multi-faced set, denoted by $[n]_{\mathbf f}$. Conversely, we associate with a multi-faced set $S=(\{s_1<\ldots< s_n\},{\mathbf{f}})$ the word $|S|:={\mathbf{f}}(s_1)\cdots {\mathbf{f}}(s_n)\in\mathcal F^*$. We choose this on first sight odd notation because the word ${\mathbf{f}}$ plays the same role as the number of elements of a set plays in the single-faced case in the moment-cumulant formulas we are aiming at.

Let $S$ be a multi-faced set and $\sim$ an equivalence relation such that
\begin{itemize}
\item the equivalence classes are intervals,
\item ${\mathbf{f}}$ is constant on equivalence classes. 
\end{itemize}
Then we understand the quotient $S/{\sim}$ as a multi-faced set with the induced total order and face map.
\begin{example}\label{ex:quotients}
  We briefly discuss the two situations that will appear several times in this article.
  \begin{enumerate}
  \item Let $\mathbf {\mathbf{f}}\in\mathcal F^n$ be a face  and $\sim$ the equivalence relation on $[n]$ that identifies two neighboring points $i,i+1$ in the same face, i.e.\ ${\mathbf{f}}(i)={\mathbf{f}}(i+1)$. In this case we write ${\mathbf{f}}/(i\sim i+1)$ for the quotient $[n]_{\mathbf{f}}/\sim$ and denote its elements $\ell$ instead of $\{\ell\}$ for the trivial equivalence classes of $\ell\in[n]\setminus\{i,i+1\}$ and $\{i,i+1\}$ for the two-element equivalence class of $i$ and $i+1$. 
  \item Let $S$ be a multi-faced set and $\sim$ the equivalence relation whose equivalence classes are the maximal intervals on which $\mathbf f$ is constant. We then call the quotient $S_{\mathrm{red}}:=S/{\sim}$ the \emph{reduction of $S$}. In the reduction, neighboring points will always have different faces, so that no further quotienting is possible.
  \end{enumerate}
\end{example}

A partition of a multi-faced set $S$ is a collection of multi-faced subsets whose underlying sets form a set partition. The set of all partitions of a multi-faced set $S$ is denoted $\mathcal P(S)$. An ordered partition of $S$ is a partition of $S$ together with a total order between the blocks. The set of all ordered partitions is denoted $\mathcal P_{<}(S)$.

For a word ${\mathbf{f}}\in\mathcal F^n$, we put $\mathcal P({\mathbf{f}}):=\mathcal P([n]_{\mathbf{f}})$ and $\mathcal P_<({\mathbf{f}}):=\mathcal P_<([n]_{\mathbf{f}})$. We also denote
\[\mathcal P:=\bigcup_{{\mathbf{f}}\in\mathcal F^*}\mathcal P({\mathbf{f}}),\quad \mathcal P_{<}:=\bigcup_{{\mathbf{f}}\in\mathcal F^*}\mathcal P_{<}({\mathbf{f}}).\] 

\begin{example}
  Let $\mathcal F=\{{\wcol},{\bcol}\}$ and 
  consider ${\mathbf{f}}={\wcol}{\bcol}{\bcol}{\wcol}{\bcol}\in\mathcal F^*$. Then  $\pi= \{\beta_1,\beta_2\}$ with $\beta_1=\{1,3,4\}, \beta_2=\{2,5\}$ is an element of $\mathcal P({\mathbf{f}})$ and we have $|\beta_1|={\wcol}{\bcol}{\wcol}, |\beta_2|={\bcol}{\bcol}$. This can be nicely drawn as an \emph{arc diagram}, $\pi=\partxyoxoxyo$. 
\end{example}

In the following we will not distinguish between a partition and its arc diagram. In this article, we mostly use arc-diagrams to denote partitions in $\mathcal P$, i.e.\ without a block-order; the height of the blocks is then completely arbitrary. For a partition in $\mathcal P_<$, the height of the block corresponds to the order between blocks. If the underlying set $S$ is not of the form $[n]_{\mathbf f}$ (typically because it was obtained as a quotient), we draw the diagram for the corresponding partition of $|S|$.

$\mathcal P({\mathbf{f}})$ is a partially ordered set by the order of reverse refinement. The maximum and minimum of $\mathcal P({\mathbf{f}})$ are denoted $1_{\mathbf{f}}$ and $0_{\mathbf{f}}$, respectively, i.e.\ $1_{\mathbf{f}}$ is the one-block partition and in $0_{\mathbf{f}}$ all blocks are singletons.

There is a canonical bijection between $\mathcal P(S/{\sim})$ and the set of $\pi\in\mathcal P(S)$ such that equivalent points of $S$ lie in the same block of $\pi$. 

For a multi-faced partition $\pi$, consider the equivalence relation $\sim$ defined on the underlying multi-faced set $S$ by
\begin{center}
  $s\sim t \mathrel{:\Longleftrightarrow}$ all $r\in S$ with $s\leq r\leq t$ have the same color and belong to the same block of $\pi$.
\end{center}
In other words, $\sim$ is the equivalence relation
 whose equivalence classes are the maximal intervals $I$ of $S$ which fulfill the following two properties:
\begin{itemize}
\item ${\mathbf{f}}$ is constant on $I$;
\item all elements of $I$ belong to the same block of $\pi$. 
\end{itemize}
We define the \emph{reduction of $\pi$} as the induced multi-faced partition $\pi_{\mathrm{red}}$ on $S/{\sim}$. For example, \[\left(\partxyoyoyxo\right)_{\mathrm{red}}=\partxyoyxo.\]
Then $\pi_{\mathrm{red}}$ will not have neighboring legs that are in the same face and in the same block. For $\pi\in\mathcal P_<$, the block order remains unchanged.

For a multi-faced set $S$, we define its mirror image $\overline S$ as the set with one element $\overline s$ for each $s\in S$ (so that $s\mapsto \overline s$ is a bijection) with the face structure $\overline{\mathbf f}(\overline s):=\mathbf f(s)$ and reversed order, i.e.\ $\overline s\leq \overline t\iff s\geq t$. For $\pi\in\mathcal P(S)$, we put $\overline\pi\in\mathcal P(\overline S)$ as the set partition with a block $\overline \beta=\{\overline s_1,\ldots, \overline s_n\}$ for each block  $\beta=\{s_1,\ldots, s_n\}\in\pi$. For example,
\[\overline{\wcol\bcol\bcol\wcol\bcol}=\bcol\wcol\bcol\bcol\wcol,\quad\overline{\left(\partxyoxozyo\right)}={\partyozxoyox}.\]
If $S=[n]_{\mathbf f}$ for $\mathbf f\in\mathcal F^n$, so that the underlying set is $[n]$, we use the convention that $\overline k:=n-k+1$ (i.e.\ we identify $\overline k$ with its image under under the unique strictly increasing map $\overline{[n]}\to[n]$); this has the effect that $\overline{[n]}$ is identified with $[n]$ and $\overline{[n]_{\mathbf f}}=[n]_{\overline {\mathbf f}}$ for $\overline{\mathbf f}=\overline{\mathbf f}(1) \cdots\overline{\mathbf f}(n)=\mathbf f(n) \cdots \mathbf f(1)$ the mirror image of $\mathbf f$. This is clearly in accordance the diagrammatic representation. If $\pi=\{\beta_1<\ldots <\beta_k\}\in \mathcal P_<$, then $\overline\pi$ is defined as before together with the (non-reversed!) block order $\overline\beta_1<\ldots<\overline\beta_k$.

Finally, we introduce a notation for uniting blocks. Let $\pi=\{\beta_1<\ldots <\beta_k\}\in\mathcal P_<(S)$ with blocks $\beta_i,\beta_{i+1}$ that are nearest neighbors for the order on $\pi$. Then we define $\pi_{\beta_i\smile \beta_{i+1}}:=\{\beta_1<\ldots< \beta_{i-1} < \beta_i\cup \beta_{i+1} < \ldots < \beta_k\}$. Similarly, for $\pi\in\mathcal P({\mathbf{f}})$ and arbitrary blocks $\beta_1,\beta_2\in\pi$, $\pi_{\beta_1\smile \beta_2}:=\pi\setminus\{\beta_1,\beta_2\}\cup \{\beta_1\cup \beta_2\}$. For example,
\[\left(\partxyoxozyo\right)_{\{1,3\}\smile\{2,5\}}=\partxxoxoyxo.\]

Let $\mathbf f_i\in\mathcal F^{m_i}$, $i\in[n]$, be face structures and $\mathbf f$ their \emph{concatenation}, i.e.\ $\mathbf f(m_1+\cdots + m_{i-1}+\ell)=\mathbf f_i(\ell)$ for all $i\in[n]$, $\ell\in[m_i]$. 
Given partitions $\pi_i\in\mathcal P(\mathbf f_i)$, we define their \emph{concatenation} as the partition $\pi\in\mathcal P(\mathbf f)$ which has for every block $\beta\in\pi_i$ with $i\in[n]$ a block $\widetilde \beta:=\{\ell:\ell+\sum_{j=1}^{i-1} m_j\in \beta\}$. Roughly speaking, $\pi$ restricts to $\pi_i$ on the legs corresponding to $\mathbf f_i$. For example, the concatenation of $\pi_1=\partxxoxoyxo$ and $\pi_2=\partyozxoyox$ is $\pi=\partxxoxoyxo\partyozxoyox$. We do not define here the concatenation of ordered partitions.

\section{Highest coefficients: necessary conditions}\label{sec:necessary-conditions}

\begin{definition}
  A family of complex numbers $\alpha=(\alpha_\pi)_{\pi\in\mathcal P_{<}}$ is called (family of) \emph{weights on ordered partitions}, a family $\alpha=(\alpha_\pi)_{\pi\in\mathcal P}$ is called (family of) \emph{weights on partitions}. Weights on (ordered) partitions are called \emph{monic} if $\alpha_\pi=1$ for every one-block partition.
\end{definition}

For a family of numbers
\[\alpha_{\mathbf s}:\mathbf s\in([k]\times \mathcal F)^n, k,n\in\mathbb N\]
(as it is for example obtained from a universal product by \Cref{thm:MS:highest-coefficients}) and $\pi=\{\beta_1<\ldots< \beta_k\}\in\mathcal P_<({\mathbf{f}})$ an ordered multi-faced partition with $k$ blocks, we define $\mathbf s_\pi\in ([k]\times \mathcal F)^n$ via $\mathbf s_\pi(\ell):=(\kappa,\bsquare)$ if $\ell\in \beta_\kappa$ and ${\mathbf{f}}(\ell)=\bsquare$ and put
\[\alpha_\pi:=\alpha_{\mathbf s_\pi}.\]
In this way, we associate with each universal product a family of weights on ordered partitions, and we say that the weights of a universal product are its highest coefficients. Note that such weights are always monic.

We say that weights on ordered partitions $\alpha$ are \emph{invariant under permutation of blocks} if
\[\alpha_{\{\beta_1<\ldots< \beta_k\}}=\alpha_{\{\beta_{1'}<\ldots<\beta_{k'}\}} \text{ for every permutation $\kappa\mapsto\kappa'$ of $[k]$}.\]
In this case, define $\alpha_\pi$ for a non-ordered partition $\pi=\{\beta_1,\ldots, \beta_k\}\in\mathcal P({\mathbf{f}})$ simply as the value $\alpha_{\{\beta_1<\ldots < \beta_k\}}$ for an arbitrary ordered partition with the same blocks as $\pi$. In this way, we can identify weights on partitions and weights on ordered partitions which are invariant under block permutation.

\begin{remark}
  It is easy to check that the weights $\alpha$ coming from a universal product according to \Cref{thm:MS:highest-coefficients} are invariant under permutation of blocks if and only if the universal product is symmetric. 
\end{remark}

The question we wish to answer is the following: under which conditions on the weights $\alpha$ is there a (positive) universal product $\odot$ with highest coefficients $\alpha$? The next theorem yields some necessary conditions. 

\begin{theorem}\label{thm:properties-of-highest-coeffs}
  Let $\odot$ be a positive multi-faced universal product. Then the highest coefficients fulfill:
  \begin{enumerate}[label=\textnormal{(\roman*)}]
  \item\label[condition]{it:highest-coeff:1-block-partitions} $\alpha_{1_{\mathbf{f}}}=1$ for all ${\mathbf{f}}\in\mathcal F^*$.
  \item\label[condition]{it:two-point-partition} $\alpha_{(\{\{1\}<\{2\}\},{\mathbf{f}})}=\alpha_{(\{\{2\}<\{1\}\},{\mathbf{f}})}=1$ for every ${\mathbf{f}}\in \mathcal F^2$.
  \item\label[condition]{it:highest-coeff:red} $\alpha_\pi=\alpha_{\operatorname{red}(\pi)}$.
  \item\label[condition]{it:highest-coeff:split} Suppose $\pi\in\mathcal P_<({\mathbf{f}})$ has blocks $\beta_1<\beta_2$ that are nearest neighbors for the order of $\pi$ and have neighboring legs in the same face, i.e.\ there exist $i\in \beta_1$, $j\in \beta_2$, $|i-j|=1$, ${\mathbf{f}}(i)={\mathbf{f}}(j)$.
    Then
    \[\alpha_\pi=\alpha_{\pi_{\beta_1\smile \beta_2}}\cdot \alpha_{\{\beta_1<\beta_2\}}\]
  \item\label[condition]{it:highest-coeff:change-color} $\alpha_\pi=\alpha_\sigma$ whenever $\pi$ and $\sigma$ only differ in the faces of extremal legs.
  \item\label[condition]{it:highest-coeff:symmetry} $\alpha_{\overline\pi}=\overline{\alpha_{\pi}}$.
  \end{enumerate}
\end{theorem}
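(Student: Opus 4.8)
The plan is to read every condition off the highest-coefficient formula \eqref{eq:highest-coefficients}, evaluating coefficients via \Cref{lem:trivially-multi-faced} on the ``trivial'' states $\varphi_\kappa=\bigsqcup_\bsquare\mathrm{id}\colon\bigsqcup_\bsquare\mathbb C\to\mathbb C$, whose generators are projections sent to $1$. Two elementary tools recur. First, for $t\in[0,1]$ and a restricted state $\varphi$, the rescaling $t\varphi$ is again a restricted state: its unital extension fixes $1$ and is positive, since $\varphi(a^*a)\ge|\varphi(a)|^2\ge t\,|\varphi(a)|^2$. Hence for $\mathbf t\in[0,1]^k$ the product $\omega_{\mathbf t}:=(t_1\varphi_1)\odot\cdots\odot(t_k\varphi_k)$ is a restricted state, and by \eqref{eq:all-coefficients} the map $\mathbf t\mapsto\omega_{\mathbf t}(a)$ is a polynomial in which the monomial $t_1\cdots t_k$ occurs only for the maximal partition, with coefficient $\alpha_{\mathbf s}\prod_\kappa\varphi_\kappa(\overrightarrow{\prod}_{\beta_\kappa}a_\ell)$. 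Thus any polynomial identity holding on $[0,1]^k$ can be read in its $t_1\cdots t_k$-coefficient as a statement about $\alpha_{\mathbf s}$. Second, $\omega_{\mathbf t}$ restricts to $t_\kappa\varphi_\kappa$ on $A_\kappa$, so Cauchy--Schwarz in the GNS space of its unital extension is available.

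\Cref{it:highest-coeff:1-block-partitions,it:highest-coeff:red,it:highest-coeff:symmetry} are then immediate. For \cref{it:highest-coeff:1-block-partitions} ($k=1$) the linearized part is just $\varphi_1$, so $\alpha_{1_{\mathbf f}}=1$ (the monic property already noted). For \cref{it:highest-coeff:red} one observes $\mathbf s_{\operatorname{red}(\pi)}=\widetilde{\mathbf s_\pi}$ (merging neighbouring legs of equal block and equal face is exactly merging repeated entries of $\mathbf s_\pi$), so $\alpha_\pi=\alpha_{\operatorname{red}(\pi)}$ holds by the very definition of $\alpha_{\mathbf s}$ for non-alternating $\mathbf s$ in \Cref{thm:MS:highest-coefficients}. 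For \cref{it:highest-coeff:symmetry}, a restricted state is Hermitian, so $\omega_{\mathbf t}(a^*)=\overline{\omega_{\mathbf t}(a)}$ for all $\mathbf t\in[0,1]^k$; since $a^*$ realizes the mirrored structure $\overline{\mathbf s}$ and $\varphi_\kappa(b^*)=\overline{\varphi_\kappa(b)}$, comparing $t_1\cdots t_k$-coefficients (with trivial states, all of whose moments equal $1$) yields $\alpha_{\overline{\mathbf s}}=\overline{\alpha_{\mathbf s}}$.

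The positivity input enters \cref{it:two-point-partition,it:highest-coeff:change-color}, both via the same projection trick. For \cref{it:two-point-partition} take $A_\kappa^\bsquare=\mathbb C$ with projection generator $p_\kappa$ and $\varphi_\kappa(p_\kappa)=1$; then $b:=p_1-1$ satisfies $bb^*=1-p_1$, so the restriction property gives $\omega(bb^*)=1-\varphi_1(p_1)=0$, whence Cauchy--Schwarz forces $\omega((p_1-1)p_2)=0$, i.e.\ $\omega(p_1p_2)=\omega(p_2)=1$; as only the maximal partition is adapted, $\omega(p_1p_2)=\alpha_{\mathbf s}$, so $\alpha_{\mathbf s}=1$ (the reversed order follows symmetrically, or from \cref{it:highest-coeff:symmetry}). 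For \cref{it:highest-coeff:change-color}, let $p,p'$ be the projection generators of the two faces $\bsquare,\bsquare'$ of the block containing the extremal leg; then $(p-p')(p-p')^*=p+p'-pp'-p'p$ lies in a single $A_\kappa$, and since all moments of the trivial states equal $1$ we get $\omega_{\mathbf t}((p-p')(p-p')^*)=t_\kappa(1+1-1-1)=0$. Placing $p-p'$ at the extremal (hence one-sided) position and applying Cauchy--Schwarz gives $\omega_{\mathbf t}((p-p')r)=0$ for the remaining factor $r$, so switching the face of the extremal leg alters no moment of $\omega_{\mathbf t}$; comparing $t_1\cdots t_k$-coefficients yields $\alpha_\pi=\alpha_\sigma$. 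Extremality is exactly what is needed: only for a boundary factor does $\omega_{\mathbf t}((p-p')(p-p')^*)=0$ control $\omega_{\mathbf t}((p-p')r)$.

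The main obstacle is \cref{it:highest-coeff:split}, which I would derive from associativity. Writing the product with the blocks in their order as $\varphi_{(1)}\odot\cdots\odot\varphi_{(k)}$ and bracketing the two order-consecutive blocks $\beta_1,\beta_2$ into $\psi:=\varphi_{(m)}\odot\varphi_{(m+1)}$ on $A':=A_{(m)}\sqcup A_{(m+1)}$, I view the result as a $(k-1)$-fold product whose block structure $\mathbf s'$ has $\beta_1\cup\beta_2$ as the single $A'$-block; its maximal partition is $\pi_{\beta_1\smile\beta_2}$, so its highest coefficient is $\alpha_{\pi_{\beta_1\smile\beta_2}}$. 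Expanding via \eqref{eq:all-coefficients} and extracting the part multilinear in all $k$ original functionals is where the hypothesis enters: the same-face adjacency $\mathbf f(i)=\mathbf f(j)$ makes $i,j$ a repeated entry of $\mathbf s'$, forcing them into one block of every adapted partition. A degree count then shows the only non-maximal sub-partition of $\beta_1\cup\beta_2$ that could contribute a multilinear term is the split $\beta_1\mid\beta_2$ --- precisely the one excluded by the repeat. Hence only the maximal term survives, contributing $\alpha_{\pi_{\beta_1\smile\beta_2}}$ times the $2$-fold highest coefficient $\alpha_{\{\beta_1<\beta_2\}}$ of $\psi$ on $\overrightarrow{\prod}_{\ell\in\beta_1\cup\beta_2}a_\ell$. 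The delicate point, which I expect to require the most careful bookkeeping, is verifying that the repeat annihilates \emph{all} competing terms (not merely the literal $\beta_1\mid\beta_2$ split) and that the two surviving factors are identified exactly with $\alpha_{\pi_{\beta_1\smile\beta_2}}$ and $\alpha_{\{\beta_1<\beta_2\}}$; a different-face adjacency genuinely leaves surviving cross terms, so the hypothesis cannot be dropped.
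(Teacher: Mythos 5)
Your proposal is correct, and on the substantive items it takes essentially the same route as the paper: for (iv) you use exactly the paper's argument (associativity to bracket the two order-consecutive functionals into $\psi$, then extraction of the part multilinear in all $k$ functionals, where the same-face adjacency makes the two legs a repeated entry of $\mathbf s'$ and hence forces them into one block of every adapted partition), and for (v) and (vi) you use the same Cauchy--Schwarz and hermiticity arguments with the trivially multi-faced states of \Cref{lem:trivially-multi-faced}; your extraction of the $t_1\cdots t_k$-coefficient of a polynomial on $[0,1]^k$ is just a rephrasing of the paper's mixed partial derivative at $\mathbf t=0$. Concerning the ``delicate point'' you flag in (iv): it is indeed fine, and your degree count already settles it --- a block of an adapted partition assigned to the merged algebra that meets both $\beta_1$ and $\beta_2$ contributes degree at least one in each of $t_m,t_{m+1}$, so multilinearity permits only either a single mixed block (which must then be all of $\beta_1\cup\beta_2$, since the blocks in that class partition it) or exactly two unmixed blocks (which must then be exactly $\beta_1$ and $\beta_2$); the repeated entry excludes the latter, and the surviving term is identified with $\alpha_{\pi_{\beta_1\smile\beta_2}}\cdot\alpha_{\{\beta_1<\beta_2\}}$ via \eqref{eq:highest-coefficients} applied to $\psi$. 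The one genuine divergence is (ii): the paper first invokes (v) to reduce to ${\mathbf f}(1)={\mathbf f}(2)$ and then quotes the single-faced result of Ben Ghorbal and Sch{\"u}rmann \cite{bGSc05}, whereas you prove it directly with the projection trick $b=p_1-1$, $bb^*=1-p_1$, $\tilde\omega(bb^*)=0$, and Cauchy--Schwarz. Your version is self-contained, works for arbitrary ${\mathbf f}\in\mathcal F^2$ without any reduction, and essentially inlines the cited argument; what the citation buys the paper is only brevity.
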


\begin{proof}
  Recall the definition of $\mathbf s_\pi\in([k]\times \mathcal F)^*$ for $\pi=(\beta_1<\ldots< \beta_k)\in\mathcal P_<$ from the beginning of this section.  By \Cref{lem:trivially-multi-faced}, we can express each coefficient $\alpha_\pi$ as
  \[\alpha_\pi=\varphi_1\boxdot\cdots\boxdot\varphi_k(a)\]
  with $a\in A^\pi:=A^{\mathbf s_\pi}$ and $(\varphi_1\otimes\cdots\otimes\varphi_k)(a_\pi)=1$, where $a_\pi:=\bigl(\mathop{\overrightarrow{\prod}}_{\ell\in \beta_1} a_\ell\bigr)\otimes\cdots\otimes\bigl(\mathop{\overrightarrow{\prod}}_{\ell\in \beta_k} a_\ell\bigr)$. We will freely use this notation in the rest of the proof.
  
  \ref{it:highest-coeff:1-block-partitions} follows from the restriction property in \Cref{def:universal-product}. \ref{it:highest-coeff:red} holds by definition of the non-reduced coefficients in the proof of  \Cref{thm:MS:highest-coefficients}. For \ref{it:highest-coeff:split} we have to carefully analyse the linearized universal product. If $\pi$ has neighboring blocks $\beta_1<\beta_2$ with neighboring legs in face $\bsquare\in \mathcal F$, then $a\in A^{\pi}$ implies that $a=a_1\cdots a_{r}a_{r+1}\cdots a_n$ with $a_{r}\in A_i^{\bsquare},a_{r+1}\in A_j^{\bsquare}$ with $|i-j|=1$. Without loss of generality, assume $j=i+1$. Then
  \begin{align*}
    \alpha_\pi
    &=\varphi_1\boxdot\cdots\boxdot\varphi_k (a)\\
    &=\left.\frac{\partial^k}{\partial t_1\cdots \partial t_k} \Bigl((t_1\varphi_1)\odot\cdots \odot\bigl((t_i\varphi_i)\odot(t_{i+1}\varphi_{i+1})\bigr)\odot\cdots\odot\ ( t_k\varphi_k )\Bigr)(a)\right\vert_{\mathbf t=0}.
  \end{align*}
  Evaluating the full coefficient formula, \Cref{eq:all-coefficients}, for the universal product of the $k-1$ functionals
  \[\psi_\ell:=
    \begin{cases}
      t_\ell\varphi_\ell& \text{if $\ell<i$,}\\
      t_i\varphi_i\odot t_{i+1}\varphi_{i+1}&\text{if $\ell=i$,}\\
      t_{\ell+1}\varphi_{\ell+1}&\text{if $\ell>i$}
    \end{cases}
  \]
  every summand will contain a factor $F=\psi_i(\cdots a_ra_{r+1}\cdots)$ because the two factors $a_r,a_{r+1}$ are from the same block and face and therefore have to be treated as one. Summands with more factors containing $\psi_i$ vanish in the linearization procedure. Therefore, we obtain
   \begin{align*}
     \MoveEqLeft\left.\frac{\partial^k}{\partial t_1\cdots \partial t_k} \Bigl((t_1\varphi_1)\odot\cdots \odot\bigl((t_i\varphi_i)\odot(t_{i+1}\varphi_{i+1})\bigr)\odot\cdots\odot\ ( t_k\varphi_k )\Bigr)(a)\right\vert_{\mathbf t=0}\\
     &=\alpha_{\pi_{\beta_1\smile \beta_2}}\left.\frac{\partial^2}{\partial t_i \partial t_{i+1}}  \bigl((t_i\varphi_i)\odot(t_{i+1}\varphi_{i+1})\bigr)(\cdots a_r a_{r+1}\cdots )\right\vert_{\mathbf t=0}\\
     &=\alpha_{\pi_{\beta_1\smile \beta_2}}\cdot \alpha_{\{\beta_1<\beta_2\}}
   \end{align*}
   as claimed

   So far, we have not made significant use of positivity (except that we assumed that wrong ordered coefficients vanish), but positivity is important to prove the remaining two properties.

   \ref{it:highest-coeff:symmetry} follows easily from the fact that positive functionals are hermitian and $a\in A^\pi$ if and only if $a^*\in A^{\overline \pi}$. All we have to do is choose some restricted states $\varphi_1,\ldots,\varphi_k$ and $a\in A^\pi$ with $\varphi_1\otimes\cdots\otimes\varphi_k(a_\pi)=1$, then $\varphi_1\otimes\cdots\otimes\varphi_k((a^*)_{\overline{\pi}})=1$ and we conclude  $\overline{\alpha_\pi}=\varphi_1\boxdot\cdots\boxdot\varphi_k(a^*)=\alpha_{\overline \pi}$.

   To show \ref{it:highest-coeff:change-color}, 
   assume that $\alpha_\pi=\varphi_1\boxdot\cdots\boxdot\varphi_k(a)$, $a=a_1^{\wsquare}a_2\cdots a_n$ with $a_1^{\wsquare}\in A^{\wsquare}_i$ and trivially multi-faced restricted states $\varphi_\kappa$, this is always possible by \Cref{lem:trivially-multi-faced}.
   Then $t_\kappa\varphi_\kappa$ is a restricted state for all $t_\kappa\leq1$, and
   \begin{multline*}
     \left|t_1\varphi_1\odot\cdots\odot t_k\varphi_k\bigl((a_1^{\wsquare}-a_1^{\bsquare})a_2\cdots a_n\bigr)\right|^2\\
     \leq t_i\varphi_i\bigl( (a_1^{\wsquare}-a_1^{\bsquare})^*(a_1^{\wsquare}-a_1^{\bsquare})\bigr)  (t_1\varphi_1\odot\cdots\odot t_k\varphi_k)\bigl((a_2\cdots a_n)^*(a_2\cdots a_n)\bigr)=0
   \end{multline*}
   where $a_1^{\bsquare}$ is the image of $a_1^{\wsquare}$ under the isomorphism $A_i^{\wsquare}\cong A_{i}^{\bsquare}$ making $\varphi_i$ trivially multi-faced. From this the statement for the first leg readily follows. For the corresponding statement for the last leg, we can either apply \ref{it:highest-coeff:symmetry} or perform an analogous computation. 

   Finally, let $\pi=(\{\{1\}<\{2\}\},{\mathbf{f}})$ or $\pi=(\{\{2\}<\{1\}\},{\mathbf{f}})$. By \ref{it:highest-coeff:change-color}, we can assume without loss of generality that ${\mathbf{f}}(1)={\mathbf{f}}(2)$. Therefore, \ref{it:two-point-partition} follows from the single-faced case, which is settled in \cite[Theorem 2.5]{bGSc05}\footnote{In the statement, Ben\,Ghorbal and Sch{\"u}rmann assume ``nondegenerateness'', but the proof does not use this assumption.}.
\end{proof}

\begin{remark}
  Note that the multi-faced universal products of bi-Boolean independence (defined by Gu and Skoufranis \cite{GuSk19}) and bi-monotone independence of type I (defined by Gu, Hasebe and Skoufranis \cite{GHS20}) are not positive. Their associated highest coefficients do not fulfill \ref{it:highest-coeff:symmetry}.
\end{remark}

For the rest of this article, we restrict ourselves to the symmetric case. As noted before, symmetry of the universal product is equivalent to invariance under block-permutation of its highest coefficients, and in this case we denote its highest coefficients $\alpha_\pi$ with $\pi\in\mathcal P$.

\begin{definition}\label{def:admissible}
  A family $\alpha=(\alpha_\pi)_{\pi\in\mathcal P}$ of complex numbers is called \emph{admissible weights} if the corresponding block-permutation invariant family $\alpha_{(\pi,<)}:=\alpha_\pi$ fulfills \textrm{(i)} -- \textrm{(vi)} in \Cref{thm:properties-of-highest-coeffs}; in particular, it fulfills
  \begin{enumerate}
  \item[(iv')]  Suppose $\pi\in\mathcal P({\mathbf{f}})$ has blocks $\beta_1\neq \beta_2$ with neighboring legs $i\in \beta_1$, $i+1\in \beta_2$ of the same face, ${\mathbf{f}}(i)={\mathbf{f}}(i+1)$.
    Then
    \[\alpha_\pi=\alpha_{\pi_{\beta_1\smile \beta_2}}\cdot \alpha_{\{\beta_1,\beta_2\}}.\]  
  \end{enumerate}
\end{definition}
\begin{definition}\label{def:admissible-partitions}
  A set of multi-faced partitions $\Pi\subset \mathcal P$ is called an \emph{admissible set of partitions} if
  \[\alpha_\pi:=
    \begin{cases}
      1&\text{if $\pi\in \Pi$,}\\
      0&\text{otherwise}
    \end{cases}
  \]
  defines admissible weights.\footnote{Note that this is closely related to the definition of a universal class of partitions in \cite{Var21}, but not completely equivalent; the difference is that an admissible set must always contain the partitions $\sqpartxyo$ for all ${\wsquare\bsquare}\in\mathcal F^2$.}
\end{definition}

\begin{notation}
  If $\pi$ is described by a certain arc-diagram $\mathrm{Diag}$, we will write $\alpha(\mathrm{Diag})$ instead of $\alpha_\pi$. Also, we will use a grey square (or circle in the 2-faced case) to indicate that the color of the extremal legs is arbitrary. For example $\alpha\left(\sqpartxyzyozox\right)=\alpha_\pi$ for $\pi=(\{\{1,6\},\{2,4\},\{3,5\}\},{\wsquare}{\wsquare}{\wsquare}{\bsquare}{\bsquare}{\bsquare})$ or any other $\pi$ with the same set partition and the same coloring of the non-extremal legs 2,3,4,5.
\end{notation}

\begin{observation}\label{obs:admissible-weights-and-sets}
  Let $(\alpha_\pi)_{\pi \in \mathcal{P}}$ be admissible weights. Then $\Pi_\alpha=\{\pi:\alpha_\pi\neq0\}$ is an admissible set of partitions. 
  There are, however, admissible families with $\alpha_\pi\notin\{0,1\}$ for some $\pi\in\mathcal P$. Indeed, \Cref{exp:deformed-tensor} in particular shows that, for $\alpha$ the highest coefficients of the deformed tensor product $\uttprod{}{\zeta}{}{\phantom{\zeta}}$ with $\zeta\neq 1$, one finds $\alpha(\crosscircbullet)=\overline\zeta\notin\{0,1\}$.
\end{observation}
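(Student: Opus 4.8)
The plan is to check that the $\{0,1\}$-valued family $\widetilde\alpha$ associated with $\Pi_\alpha$ via \Cref{def:admissible-partitions} --- that is, $\widetilde\alpha_\pi=1$ when $\alpha_\pi\neq 0$ and $\widetilde\alpha_\pi=0$ otherwise --- is itself admissible; by \Cref{def:admissible-partitions} this is exactly the assertion that $\Pi_\alpha$ is an admissible set of partitions. As $\widetilde\alpha$ is a family on non-ordered partitions, its extension to $\mathcal P_<$ is block-permutation invariant by construction, so it suffices to verify conditions \ref{it:highest-coeff:1-block-partitions}--\ref{it:highest-coeff:symmetry} of \Cref{thm:properties-of-highest-coeffs} for $\widetilde\alpha$.

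The single structural input I would use is that $\mathbb C$ has no zero divisors and that conjugation preserves vanishing: a product of complex numbers is nonzero exactly when each factor is, and $\overline z=0\iff z=0$. Every condition of \Cref{thm:properties-of-highest-coeffs} that asserts an equality of two coefficients transports at once to $\widetilde\alpha$, since equal numbers vanish simultaneously. Thus \ref{it:highest-coeff:1-block-partitions} and \ref{it:two-point-partition} hold because $\alpha$ equals $1\neq0$ on one-block and two-point partitions, forcing $\widetilde\alpha=1$ there; \ref{it:highest-coeff:red} and \ref{it:highest-coeff:change-color} follow from $\alpha_\pi=\alpha_{\operatorname{red}(\pi)}$ and $\alpha_\pi=\alpha_\sigma$, which give $\widetilde\alpha_\pi=\widetilde\alpha_{\operatorname{red}(\pi)}$ and $\widetilde\alpha_\pi=\widetilde\alpha_\sigma$; and \ref{it:highest-coeff:symmetry} follows from $\alpha_{\overline\pi}=\overline{\alpha_\pi}$, whence $\alpha_{\overline\pi}=0\iff\alpha_\pi=0$ and so $\widetilde\alpha_{\overline\pi}=\widetilde\alpha_\pi=\overline{\widetilde\alpha_\pi}$, the last equality because $\widetilde\alpha$ is real.

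The only condition with multiplicative content is the split relation, for which I would use the block-permutation invariant form \textrm{(iv')} recorded in \Cref{def:admissible}: whenever $\pi$ has distinct blocks $\beta_1,\beta_2$ with neighboring legs of equal face, $\alpha_\pi=\alpha_{\pi_{\beta_1\smile\beta_2}}\cdot\alpha_{\{\beta_1,\beta_2\}}$. Because $\mathbb C$ is an integral domain, the left-hand side is nonzero precisely when both right-hand factors are, so $\widetilde\alpha_\pi=\widetilde\alpha_{\pi_{\beta_1\smile\beta_2}}\cdot\widetilde\alpha_{\{\beta_1,\beta_2\}}$, which is \textrm{(iv')} for $\widetilde\alpha$. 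This is the one place where the indicator has to reproduce a genuine product rather than a plain equality, and I expect it to be the (still very mild) crux of the argument; everything else is bookkeeping.

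For the second assertion there is nothing more to prove: the deformed tensor product $\uttprod{}{\zeta}{}{\phantom{\zeta}}$ is a positive symmetric universal product (positivity by \cite{GHU23}), so \Cref{thm:properties-of-highest-coeffs} makes its highest coefficients admissible weights, while \Cref{exp:deformed-tensor} computes $\alpha(\crosscircbullet)=\overline\zeta$, which lies outside $\{0,1\}$ as soon as $\zeta\in\mathbb T\setminus\{1\}$.
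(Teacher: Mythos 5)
Your proposal is correct and is exactly the argument the paper leaves implicit (the statement is an \emph{Observation} with no written proof): the indicator family inherits conditions \textnormal{(i)}--\textnormal{(vi)} because equalities of coefficients transfer verbatim and the split relation \textnormal{(iv')} transfers since a product of complex numbers vanishes iff some factor does, while the second claim is immediate from \Cref{exp:deformed-tensor} together with positivity and symmetry of the deformed tensor product. Nothing is missing.
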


\begin{observation}\label{obs:operations-for-partitions}
  A set $\Pi\subset\mathcal P$ of partitions is admissible if and only if $\Pi$ contains the partitions 
  \begin{enumerate}[itemindent=.5em]
  \item[(P-i)]\label{it:P-i} $1_{\mathbf{f}}$ for all ${\mathbf{f}}\in\mathcal F^{*}$
  \item[(P-ii)]\label{it:P-ii} $\sqpartxyo$ for all ${\wsquare\bsquare}\in\mathcal F^2$
  \end{enumerate}
  and is closed under the following operations used in \cite{Var21}:
  \begin{enumerate}[itemindent=.5em]
  \item[(P-iii)] \emph{double} a leg, including its color 
  \item[(P-iii)'] \emph{merge} two neighboring legs of the same color in the same block into one
  \item[(P-iv)] \emph{unite} two blocks which have neighboring legs of the same color into one block, $\pi\mapsto \pi_{\beta_1\smile \beta_2}$
  \item[(P-iv)'] \emph{remember} a two-block partition formed by two blocks with neighboring legs of the same color, $\pi\mapsto \{\beta_1,\beta_2\}$
  \item[(P-iv)''] \emph{replace} a block of a partition from $\Pi$ by a two-block partition from $\Pi$ (of the same underlying multi-faced set as the original block) such that the blocks have neighboring legs of the same color, $(\pi_{\beta_1\smile \beta_2},\{\beta_1,\beta_2\})\mapsto \pi$
  \item[(P-v)] \emph{mirror} a partition, $\pi\mapsto \overline\pi$
  \item[(P-vi)] \emph{change color} of an extremal leg of a partition from $\Pi$
  \end{enumerate}
\end{observation}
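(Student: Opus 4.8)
The plan is to unwind the two definitions on which the statement rests and to observe that, for the \emph{indicator weights} they produce, every identity required of the weights collapses to a purely set-theoretic (membership) statement about $\Pi$. By \Cref{def:admissible-partitions}, the set $\Pi$ is admissible precisely when the $\{0,1\}$-valued family given by $\alpha_\pi:=1$ for $\pi\in\Pi$ and $\alpha_\pi:=0$ otherwise is a family of admissible weights; and by \Cref{def:admissible} this holds exactly when $\alpha$ satisfies conditions (i)--(vi), in particular (iv'), of \Cref{thm:properties-of-highest-coeffs}. So the proof is a dictionary translating each of those conditions, specialised to $\{0,1\}$-valued $\alpha$, into the corresponding base-point requirement or closure operation (P-i)--(P-vi), handled one condition at a time; assembling the six translations yields both directions of the equivalence at once.

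Most of the dictionary entries are immediate. Conditions (i) and (ii), i.e.\ $\alpha_{1_{\mathbf{f}}}=1$ and $\alpha_{\sqpartxyo}=1$ (the two admissible values of the length-two singleton partition agreeing in the block-permutation invariant setting), say exactly $1_{\mathbf{f}}\in\Pi$ and $\sqpartxyo\in\Pi$, which are (P-i) and (P-ii). The two ``symmetry'' conditions become involutive biconditionals: since $\alpha$ is real, (vi) reads $\alpha_{\overline\pi}=\alpha_\pi$, i.e.\ $\pi\in\Pi\iff\overline\pi\in\Pi$, and because $\overline{\overline\pi}=\pi$ this single biconditional is equivalent to closure under (P-v); likewise (v) reads $\pi\in\Pi\iff\sigma\in\Pi$ whenever $\sigma$ arises from $\pi$ by recolouring extremal legs, and since a recolouring and its reverse are both allowed, this is closure under (P-vi) (a general extremal recolouring being a composition of single-leg recolourings). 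Condition (iii), $\alpha_\pi=\alpha_{\operatorname{red}(\pi)}$, becomes $\pi\in\Pi\iff\operatorname{red}(\pi)\in\Pi$; here I would note that $\operatorname{red}(\pi)$ is reached from $\pi$ by finitely many merges (P-iii)' and, conversely, $\pi$ is recovered from $\operatorname{red}(\pi)$ by finitely many doublings (P-iii), re-expanding each reduced leg of a maximal same-colour same-block interval, so that closure under both operations is exactly this biconditional.

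The genuinely multi-part step is (iv'). For $\{0,1\}$-weights the relation $\alpha_\pi=\alpha_{\pi_{\beta_1\smile\beta_2}}\cdot\alpha_{\{\beta_1,\beta_2\}}$ is a product of two indicators, hence equals their conjunction, and thus reads
\[
  \pi\in\Pi\iff\bigl(\pi_{\beta_1\smile\beta_2}\in\Pi\ \text{and}\ \{\beta_1,\beta_2\}\in\Pi\bigr).
\]
I would then break this biconditional into its three constituent implications: the forward direction delivers both $\pi\in\Pi\Rightarrow\pi_{\beta_1\smile\beta_2}\in\Pi$ (uniting, (P-iv)) and $\pi\in\Pi\Rightarrow\{\beta_1,\beta_2\}\in\Pi$ (remembering, (P-iv)'), while the backward direction is precisely $\pi_{\beta_1\smile\beta_2},\{\beta_1,\beta_2\}\in\Pi\Rightarrow\pi\in\Pi$ (replacing, (P-iv)''); conversely the conjunction of these three implications restores the biconditional, so closure under (P-iv), (P-iv)', (P-iv)'' is equivalent to (iv').

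The point demanding the most care --- and the main obstacle --- is exactly ensuring that these one-step closure operations lose no information relative to the ``global'' weight identities. For (iii) this is the finite reachability of $\operatorname{red}(\pi)$ under alternating merging and doubling; for (iv') it is the elementary but essential observation that a product of two $\{0,1\}$-values carries precisely the content of the conjunction, so that the single weight equation is equivalent to exactly the three separate closure rules, neither more nor fewer. Once these two equivalences are verified, the remaining translations are routine and the proof is complete.
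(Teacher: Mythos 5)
Your proof is correct and coincides with the paper's (implicit) justification: the result is stated as an Observation without proof precisely because it follows by unwinding \Cref{def:admissible,def:admissible-partitions} and translating each of the conditions of \Cref{thm:properties-of-highest-coeffs}, specialised to $\{0,1\}$-valued weights, into the corresponding membership or closure statement --- exactly the dictionary you build, with (i)$\leftrightarrow$(P-i), (ii)$\leftrightarrow$(P-ii), (iii)$\leftrightarrow$(P-iii)/(P-iii)', (iv')$\leftrightarrow$(P-iv)/(P-iv)'/(P-iv)'', (v)$\leftrightarrow$(P-vi), (vi)$\leftrightarrow$(P-v). The two points you single out as needing care --- that $\operatorname{red}(\pi)$ is reachable from $\pi$ by finitely many merges and $\pi$ from $\operatorname{red}(\pi)$ by finitely many doublings (both leaving the reduction invariant), and that for indicator weights the product identity in (iv') is precisely the conjunction yielding the three separate closure rules --- are handled correctly and are indeed the only places where anything needs to be checked.
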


Given any partitions $\pi_1,\ldots, \pi_n\in \mathcal P$, we denote by $\langle \pi_1,\ldots,\pi_n\rangle$ the minimal admissible set of partitions that contains all $\pi_i$.  We say that $\langle \pi_1,\ldots,\pi_n\rangle$ is \emph{generated} by $\pi_1,\ldots, \pi_n$; note that $\langle \pi_1,\ldots,\pi_n\rangle$ indeed consists of those partitions in $\mathcal P$ which can be obtained in finitely many steps by applying the operations of \Cref{obs:operations-for-partitions} to the partitions $1_{\mathbf{f}}$ ($\mathbf f\in\mathcal F^*$), $\sqpartxyo$ ($\wsquare\bsquare\in\mathcal F^2$), and $\pi_1,\ldots, \pi_n$.

\section{Partial classification of symmetric positive independences}\label{sec:classification}

In this section we determine all admissible families $(\alpha_\pi)_{\pi\in\mathcal P}$.

\begin{definition}
  Let $\pi$ be a partition.
  \begin{itemize}
  \item A leg $\ell$ is called \emph{inner} if there exist legs $i<\ell<j$ and a block $\beta\in\pi$ with $i,j\in \beta$ and $\ell\notin \beta$. Otherwise it is called \emph{outer}.  
  \item Two legs $\ell,\ell'$ are called \emph{connected} if they lie in the same block or if there is a sequence of blocks $\ell\in \beta_1,\ldots, \beta_n\ni \ell'$ such that there is a crossing between $\beta_k$ and $\beta_{k+1}$. Roughly speaking, $\ell$ and $\ell'$ are connected if and only if one can move from $\ell$ to $\ell'$ going only along the lines of the diagram associated with $\pi$.
  \end{itemize}
\end{definition}

We start by describing some simple consequences of the defining properties of admissible families of coefficients.

\begin{lemma}\label{lem:admissible-weights-simple-properties}
  Let $(\alpha_\pi)_{\pi\in\mathcal P}$ be admissible weights.
  \begin{enumerate}
  \item\label{it:interval-coefficients} $\alpha_\pi=1$ for all interval partitions $\pi$.
  \item\label{it:concat} Let $\pi$ be the concatenation of $\pi_1,\ldots,\pi_n$. Then $\alpha_\pi=\prod \alpha_{\pi_i}$.
  \item\label{it:splitting} $\alpha_\pi=\alpha_\sigma$ when $\sigma$ is obtained by replacing one leg $\ell$ by two copies and splitting the block $\beta\ni \ell$ into $\beta_1$ and $\beta_2$, where $\beta_1$ contains the first copy and all legs of $\beta$ smaller than $\ell$ and $\beta_2$ contains the second copy and all legs of $\beta$ larger than $\ell$. We say that $\sigma$ is obtained by \emph{splitting} $\beta$ at $\ell$.
  \item \label{it:collapse-outer-legs}
    $\alpha_\pi=\alpha_\sigma$ when $\sigma$ is obtained replacing an arbitrary number of connected outer legs by a single outer leg of arbitrary color. We call this process \emph{collapsing} the outer legs. 
  \end{enumerate}
\end{lemma}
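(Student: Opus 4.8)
The plan is to establish the four statements in turn, deducing each from the admissibility axioms (i)--(vi) of \Cref{thm:properties-of-highest-coeffs} together with the multiplicativity relation (iv$'$) of \Cref{def:admissible}, and using the earlier parts as I go. The single technique doing most of the work is \emph{shrinking}: recolour an \emph{extremal} leg by (v) so that it matches a neighbour, then merge the two by reduction invariance (iii). Since (v) only recolours extremal legs, everything must be arranged so that the leg to be recoloured is leftmost or rightmost. The linchpin is the special case of part~(1) for \emph{two-block} interval partitions, which must be proved \emph{without} (iv$'$): applied to a two-block partition $\{\beta_1,\beta_2\}$, the relation (iv$'$) is vacuous, since $\{\beta_1,\beta_2\}_{\beta_1\smile\beta_2}=1_{\mathbf f}$ and the auxiliary two-block partition coincides with $\pi$ itself, so it degenerates to $\alpha_\pi=1\cdot\alpha_\pi$. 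Instead I argue by induction on the number of legs: for $\{B_1,B_2\}$ with, say, $|B_1|\ge 2$, recolour the leftmost leg to the colour of its right neighbour (both in $B_1$) by (v) and merge them by (iii); this strictly shortens the partition while preserving the weight, the base case being the two-point partition, which has weight $1$ by (ii). If instead $|B_1|=1<|B_2|$, shrink symmetrically from the right.

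Granting this, part~(1) for a general interval partition $\{B_1,\dots,B_k\}$ follows by induction on $k$: shrink $B_1$ to a singleton as above (preserving the weight and the block count), then recolour the now-extremal single leg of $B_1$ to match the first leg of $B_2$ and apply (iv$'$), obtaining $\alpha_\pi=\alpha_{\pi_{B_1\smile B_2}}\cdot\alpha_{\{B_1,B_2\}}$, where the first factor is an interval partition with $k-1$ blocks (weight $1$ by induction) and the second is a two-block interval partition (weight $1$ by the linchpin). Part~(3) is then immediate: in the split partition $\sigma$ the two copies $\ell',\ell''$ of the doubled leg are adjacent, equal in colour, and lie in $\beta_1,\beta_2$, so (iv$'$) gives $\alpha_\sigma=\alpha_{\sigma_{\beta_1\smile\beta_2}}\cdot\alpha_{\{\beta_1,\beta_2\}}$; here $\sigma_{\beta_1\smile\beta_2}$ reduces to $\pi$ by (iii) (the two copies merge back), while $\{\beta_1,\beta_2\}$ restricts, on the legs of $\beta$, to a two-block interval partition, hence has weight $1$ by part~(1).

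For the concatenation formula (2) it suffices, by iteration, to treat $\pi=\pi_1\cdot\pi_2$, which I would handle by a lexicographic induction on (number of blocks of $\pi_1$, number of legs of $\pi_1$). Recolour the leftmost leg $1$ of $\pi_1$ (extremal both in $\pi_1$ and in $\pi_1\cdot\pi_2$) to the colour of leg $2$ by (v). If legs $1,2$ lie in the same block, merge them by (iii): this shortens $\pi_1$, and applying the same move to $\pi_1$ alone preserves both $\alpha_{\pi_1\cdot\pi_2}$ and $\alpha_{\pi_1}$, so the inductive hypothesis applies. If legs $1,2$ lie in different blocks $\beta,\beta'$ of $\pi_1$, apply (iv$'$) to $\pi_1\cdot\pi_2$; this merges $\beta,\beta'$ inside the left factor, lowering its block count, so by induction $\alpha_{(\pi_1)_{\beta\smile\beta'}\cdot\pi_2}=\alpha_{(\pi_1)_{\beta\smile\beta'}}\cdot\alpha_{\pi_2}$, and combining with (iv$'$) applied to $\pi_1$ alone, namely $\alpha_{\pi_1}=\alpha_{(\pi_1)_{\beta\smile\beta'}}\cdot\alpha_{\{\beta,\beta'\}}$, yields $\alpha_{\pi_1\cdot\pi_2}=\alpha_{\pi_1}\alpha_{\pi_2}$. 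The base cases, where $\pi_1$ is empty or a single leg, are immediate resp.\ handled by splitting the single leg off with (iv$'$), the residual two-block interval factor being $1$ by part~(1).

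Finally, part~(4) I would reduce to the elementary move of collapsing two \emph{adjacent} connected outer legs into one, iterated along a contiguous run. The key structural observation is that two neighbouring outer legs can only be connected if they lie in the same block: a crossing partner would have to straddle the cut between them, and would then nest over one of the two, making it inner. Thus the elementary collapse merges two adjacent same-block legs regardless of colour; when the colours agree this is just (iii), and the genuinely new content is the colour change, which I would extract by first using part~(3) to split the ambient block around the pair, then using that the legs are \emph{outer} to factor the surrounding diagram as a concatenation via part~(2), reducing to a short partition in which the relevant leg has become extremal so that (v) frees its colour. The main obstacle lies exactly here: making precise the notion of \emph{connected outer legs} and verifying that every admissible collapse is realisable by these local moves --- in particular that ``outer'' always supplies the concatenation factorisation and that ``connected'' excludes the crossing configurations for which the weight need not be preserved. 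By comparison, the only other delicate point is the degeneracy of (iv$'$) on two-block partitions, which is what forces the independent recolour-and-reduce proof of the linchpin grounded in the two-point identity (ii).
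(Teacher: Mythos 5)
Your parts (1)--(3) follow the paper's own proof essentially step for step: the same recolour-an-extremal-leg-and-merge shrinking of two-block interval partitions down to the two-point partition of condition (ii), the same induction via (iv') for general interval partitions and for concatenations (your lexicographic bookkeeping on $\pi_1$ replaces the paper's induction on $|\pi|$ together with a mirror-image case for $\pi_2$ --- a cosmetic difference), and the identical one-line splitting argument; your remark that (iv') degenerates on two-block partitions, forcing an independent base case, is only implicit in the paper. The genuine divergence is part (4). The paper decomposes $\pi$ into irreducible concatenation factors and collapses outer legs only by recolouring the \emph{first or last} leg of each factor and merging it into its neighbour, leaning on the assertion that connected outer legs lie in the same block. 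Your route is local: you show that \emph{any} outer leg can be recoloured, by splitting its block at that leg (part (3)), factoring the split partition as a concatenation at the cut (part (2)), recolouring the two now-extremal copies by (v), and reassembling; a collapse is then just (iii). This buys real generality: your method reaches runs of same-block outer legs in the middle of an irreducible factor, e.g.\ legs $3,4$ in $\{\{1,3,4,6\},\{2\},\{5\}\}$, which the paper's boundary-only procedure never touches, and it does not rely on the paper's parenthetical claim, which is false as literally stated (legs $1$ and $5$ of $\{\{1,3\},\{2,5\},\{4\}\}$ are connected outer legs in different blocks). Moreover, the ``main obstacle'' you flag is settled by one observation you already half-state: a block with legs strictly on both sides of the cut at an outer leg (or between two adjacent outer legs lying in different blocks) would make that leg inner; hence no block straddles such a cut, the partition factors there as a concatenation, and no crossing chain can pass the cut. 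This single fact simultaneously supplies the factorisation your recolouring needs and the same-block property for adjacent connected outer legs (the chain case, which your one-line ``straddle'' justification omits, follows because blocks lying on opposite sides of a cut cannot cross). With that spelled out, your proof of (4) is complete and, as written, more robust than the paper's.
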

\begin{proof}\ 
  \begin{enumerate}[beginpenalty=10000]
  \item This is easily proved by induction. For a two-block interval partition $\pi$, we can consecutively change color of the extremal legs and merge them with their neighboring legs until we reach $\alpha_\pi=\alpha\left(\sqpartxoyo\right)=1$. It is worth noting that for this step we needed to change the color of both extremal legs.

    Assume that the statements holds for $(n-1)$-block interval partitions and let $\pi=(\{\beta_1,\ldots, \beta_n\},{\mathbf{f}})$ be an interval partition with $n>2$ blocks. Starting similar as before, we can without loss of generality assume that $\beta_1=\{1\}$ and ${\mathbf{f}}(1)={\mathbf{f}}(2)$. In that case, we find that $\alpha_\pi=\alpha_{\pi_{\beta_1\smile \beta_2}}\cdot\alpha_{\{\beta_1, \beta_2\}}=1$.  
  \item Clearly, it is enough to prove the claim for $n=2$. We prove the claim by induction on the number of blocks $|\pi|$. If $|\pi|=2$, then $|\pi_1|=|\pi_2|=1$ and the three partitions are interval partitions, in particular $\alpha_{\pi}=1=\alpha_{\pi_1}\alpha_{\pi_2}$. If $|\pi|>2$, then $|\pi_1|>1$ or $|\pi_2|>1$. In case $|\pi_1|>2$, let  $1\in \beta_1\in \pi_1$. We can assume without loss of generality that $2\in \beta_2$ belongs to a different block $\beta_1\neq \beta_2\in\pi_1$ and ${\mathbf{f}}(1)={\mathbf{f}}(2)$; if those conditions are not met, it does not change the coefficient to change the color of the first leg to match the color of the second leg and merge them into one until we are in the described situation. Now we find $\alpha_{\pi_1}=\alpha_{{\pi_1}_{\beta_1\smile \beta_2}}\cdot \alpha_{\{\beta_1,\beta_2\}}$ and $\alpha_{\pi}=\alpha_{{\pi}_{\beta_1\smile \beta_2}}\cdot \alpha_{\{\beta_1,\beta_2\}}$. Of course, $|{\pi}_{\beta_1\smile \beta_2}|=|\pi|-1$, so we may assume that the statement holds for ${\pi}_{\beta_1\smile \beta_2}$ which is the concatenation of ${\pi_1}_{\beta_1\smile \beta_2}$ and $\pi_2$. Altogether, \[\alpha_{\pi}=\alpha_{{\pi}_{\beta_1\smile \beta_2}}\cdot \alpha_{\{\beta_1,\beta_2\}}=\alpha_{{\pi_1}_{\beta_1\smile \beta_2}}\cdot\alpha_{\pi_2}\cdot\alpha_{\{\beta_1,\beta_2\}}=\alpha_{\pi_1}\alpha_{\pi_2}.\]
    If $|\pi_2|>1$, we argue analogously, but we have to change the color of the last leg. 
\item We have $\alpha_\pi=\alpha_\sigma\alpha_{\{\beta_1,\beta_2\}}$, and $\alpha_{\{\beta_1,\beta_2\}}=1$ by (1).
\item Decompose $\pi$ into a concatenation of \emph{irreducible} $\pi_1,\ldots,\pi_n$, i.e.\ no $\pi_i$ can be deconcatenated any further. By \Cref{it:concat}, $\alpha_\pi= \prod \alpha_{\pi_i}$. Note that every outer leg of $\pi$ is the outer leg of some $\pi_i$ and that connected outer legs are necessarily in the same block.  For each $\pi_i$, the outer legs can be collapsed by iteratively changing the face of the first or last leg to match the face of its successor or predecessor, respectively, and merging the legs using the fact that the weights don't change when we reduce the partition (\Cref{it:highest-coeff:red} in \Cref{thm:properties-of-highest-coeffs}). After collapsing the outer legs that way, the faces of the outer legs can be changed once more in such a way that the concatenation of the obtained partitions $\sigma_i$ is $\sigma$. It follows, using again \Cref{it:concat}, that $\alpha_\sigma = \prod \alpha_{\sigma_i}=\prod \alpha_{\pi_i}=\alpha_\pi$. 
\end{enumerate}

\end{proof}

It is worth noting that, in the proof of \Cref{it:concat}, we need invariance of the coefficients under changing the faces of both extremal legs. For example, the weights associated with bi-Boolean independence defined in \cite{GuSk19} do not share this property. 

\begin{lemma}\label{lem:coincide-on-2block}
  Two admissible families coincide if and only if they coincide on 2-block partitions. 
\end{lemma}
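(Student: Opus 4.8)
The implication that two coinciding admissible families also coincide on $2$-block partitions is trivial, so the content is the converse: assuming two admissible families $\alpha,\alpha'$ agree on every $2$-block partition, I would show $\alpha_\pi=\alpha'_\pi$ for all $\pi\in\mathcal P$. The plan is a well-founded induction on the pair $(b(\pi),\ell(\pi))$, where $b(\pi)$ is the number of blocks and $\ell(\pi)$ the number of legs of $\pi$, ordered lexicographically. The base cases are immediate: if $b(\pi)=1$ then $\alpha_\pi=\alpha'_\pi=1$ by \Cref{it:highest-coeff:1-block-partitions}, and if $b(\pi)=2$ the two values agree by hypothesis. For the inductive step with $b(\pi)=n\ge 3$, I would first replace $\pi$ by $\operatorname{red}(\pi)$: by \Cref{it:highest-coeff:red} this changes neither $\alpha_\pi$ nor $\alpha'_\pi$, preserves the number of blocks, and does not increase the number of legs, so I may assume $\pi$ is reduced.

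The engine of the induction is to inspect the two leftmost legs $1$ and $2$. If they lie in distinct blocks $\beta_1\neq\beta_2$, I would use \Cref{it:highest-coeff:change-color} to recolor the extremal leg $1$ so that $\mathbf f(1)=\mathbf f(2)$; this leaves the block structure (and the weights) untouched, so $1\in\beta_1$ and $2\in\beta_2$ now carry the same face. Condition (iv') of \Cref{def:admissible} then yields
\[
\alpha_\pi=\alpha_{\pi_{\beta_1\smile\beta_2}}\cdot\alpha_{\{\beta_1,\beta_2\}},
\]
with the verbatim identity holding for $\alpha'$. Here $\pi_{\beta_1\smile\beta_2}$ has $n-1$ blocks and $\{\beta_1,\beta_2\}$ is a $2$-block partition, so both factors coincide for $\alpha$ and $\alpha'$ by the induction hypothesis and the standing assumption, giving $\alpha_\pi=\alpha'_\pi$.

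The remaining case — legs $1$ and $2$ in the same block $\beta$ — is the one delicate point, since (iv') does not apply to two legs sharing a block. Because $\pi$ is reduced, these legs necessarily differ in face, so recoloring leg $1$ to match leg $2$ (\Cref{it:highest-coeff:change-color}) makes them agree in both face and block; a single reduction step (\Cref{it:highest-coeff:red}) then merges them into one leg, producing $\pi''$ with the same $n$ blocks but $\ell(\pi)-1$ legs and with $\alpha_\pi=\alpha_{\pi''}$ (and likewise for $\alpha'$). The induction hypothesis gives $\alpha_{\pi''}=\alpha'_{\pi''}$, hence $\alpha_\pi=\alpha'_\pi$. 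Finally I would check well-foundedness: the first case strictly drops the block count, the second keeps it fixed while strictly dropping the leg count, and since $\ell(\pi)\ge b(\pi)$ one cannot remain in the second case forever (at minimal leg count every block is a singleton, forcing legs $1,2$ into distinct blocks). I expect the main obstacle to be precisely this same-block case and the bookkeeping that the recoloring-and-merging genuinely terminates; once the lexicographic measure is set up, everything else is a direct application of the admissibility axioms.
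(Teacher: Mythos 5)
Your proposal is correct and follows essentially the same route as the paper's proof: recolor the first leg to match the second, merge them if they share a block, and otherwise split off a $2$-block partition via condition (iv'), peeling down to the $2$-block case. The only difference is presentational — you organize the argument as a well-founded induction on the lexicographic pair (block count, leg count), whereas the paper phrases it as an iterated procedure; the underlying steps and the admissibility conditions invoked are identical.
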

\begin{proof}
  Assume that $(\alpha_\pi),(\beta_\pi)$ are admissible families with $\alpha_\sigma=\beta_\sigma$ for all 2-block partitions $\sigma$. By definition, the value on 1-block partitions is 1. Given an $n$-block partition $\pi$ with $n>2$, we alternatingly
  \begin{itemize}
  \item change the color of the first leg to match the color of the second leg, cf.\ \ref{it:highest-coeff:change-color},
  \item combine the first two legs into one if they belong to the same block, cf.\ \ref{it:highest-coeff:red},
  \end{itemize}
  to obtain a partition $\tilde\pi$ such that the first two legs of $\pi$ have the same color but belong to different blocks $\beta_1,\beta_2$. Then  $\alpha_\pi=\alpha_{\tilde\pi}$,  $\beta_\pi=\beta_{\tilde\pi}$ by definition of admissible weights. Using \ref{it:highest-coeff:split}, we then have $\alpha_\pi=\alpha_{\pi_{\beta_1\smile \beta_2}}\cdot \alpha_{\{\beta_1,\beta_2\}}$ and $\beta_\pi=\beta_{\pi_{\beta_1\smile \beta_2}}\cdot \beta_{\{\beta_1,\beta_2\}}$, where $\pi_{\beta_1\smile \beta_2}$ is an $(n-1)$-block partition and ${\{\beta_1,\beta_2\}}$ is a 2-block partition. We can iterate the procedure until we obtain $\alpha_\pi,\beta_\pi$ as products of coefficients of the same sequence of 2-block partitions, thus proving the claim. 
\end{proof}

\begin{corollary}\label{lem:coincide-on-2block4leg}
  Two admissible families coincide if and only if they coincide on 2-block partitions of at most four legs.
\end{corollary}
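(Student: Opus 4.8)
The plan is to combine \Cref{lem:coincide-on-2block} with an induction that drives the number of legs of a $2$-block partition down to at most four. By \Cref{lem:coincide-on-2block}, two admissible families agree everywhere as soon as they agree on all $2$-block partitions, so it suffices to show that the value $\alpha_\sigma$ of an arbitrary $2$-block partition $\sigma=\{\beta_1,\beta_2\}$ can be written as a product of values on $2$-block partitions of at most four legs, where \emph{every} equality used holds for an arbitrary admissible family (so that agreement on the small partitions propagates to $\alpha_\sigma$). I would induct on the number of \emph{inner} legs of $\sigma$. First I normalise: using \Cref{it:highest-coeff:red} I reduce $\sigma$, so that no two neighbouring legs share both their block and their colour; and using \Cref{it:concat} together with \Cref{it:interval-coefficients} I may assume $\sigma$ is indecomposable, since a nontrivial concatenation already splits $\sigma$ into factors with strictly fewer legs. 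For the base case I claim that a reduced indecomposable $2$-block partition with at most two inner legs has, after collapsing its outer legs by \Cref{it:collapse-outer-legs}, at most four legs; in particular one with no inner legs is an interval partition, of value $1$ by \Cref{it:interval-coefficients}.

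For the inductive step assume $\sigma$ is reduced, indecomposable, and has $r\ge 3$ inner legs. I pick an inner leg $\ell$, say $\ell\in\beta$, and apply \Cref{it:splitting} to split $\beta$ at $\ell$: this leaves $\alpha_\sigma$ unchanged but produces a $3$-block partition $\sigma'$ in which the two copies of $\ell$ lie in neighbouring blocks. I then locate an adjacency of two \emph{distinct} blocks of $\sigma'$ whose legs can be made to share a colour (freeing the colour of an extremal leg via \Cref{it:highest-coeff:change-color} where needed) and apply the factorisation relation (iv$'$) of \Cref{def:admissible}, i.e.\ the admissible analogue of \Cref{it:highest-coeff:split}. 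The aim is to arrange the remembered factor $\{\beta_1',\beta_2'\}$ to be a $4$-leg crossing, or a $3$- or $4$-leg nesting, hence a $2$-block partition of at most four legs, while the merged remainder $\sigma'_{\beta_1'\smile\beta_2'}$, after a further reduction by \Cref{it:highest-coeff:red}, is again a $2$-block partition but with exactly one fewer inner leg. This yields $\alpha_\sigma=\alpha_{\{\beta_1',\beta_2'\}}\cdot\alpha_{\tau}$ with $\tau$ having $r-1$ inner legs, and the induction hypothesis closes the argument.

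The delicate point, and the main obstacle, is precisely this inductive step: for an \emph{arbitrary} reduced indecomposable $2$-block partition with at least three inner legs one must exhibit a choice of inner leg to split \emph{and} of adjacency to factor along, such that the remembered factor genuinely has at most four legs while the merged remainder strictly drops in inner-leg count. The naive attempts fail, because factoring along the adjacency created by the split itself, or across the two extremal ``outer/inner'' adjacencies of a nesting, merely reunites the split block and returns the useless identity $\alpha_\sigma=\alpha_\sigma$; the split point and the factoring adjacency must be chosen transversally to one another. I expect a uniform argument by distinguishing the local pattern at $\ell$ (nesting-type versus crossing-type), peeling a $4$-leg nesting in the former case and a $4$-leg crossing in the latter, as the two representative reductions of $\{1,5\},\{2,3,4\}$ and $\{1,3,5\},\{2,4\}$ already suggest. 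The essential bookkeeping is that it is the \emph{inner}-leg count that decreases by one at each step, whereas the total number of legs need not decrease; getting this monotone quantity to behave in full generality is what makes the induction go through.
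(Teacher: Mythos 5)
Your reduction via \Cref{lem:coincide-on-2block}, your normalisations (reduction, indecomposability, collapsing), and your base case are all sound: in a reduced indecomposable $2$-block partition the outer legs form a prefix and a suffix, each lying in a single block, so collapsing leaves $2+r$ legs, $r$ the number of inner legs. The gap is the inductive step, which you yourself flag as unproved --- and it is not merely delicate: in the strict form you require it is false. Consider the fully crossing partition $\pi=\{\{1,3,5,7\},\{2,4,6,8\}\}$ with $\wcol$-legs at the odd and $\bcol$-legs at the even positions; it is reduced, indecomposable, and has six inner legs. Split either block at any of its inner legs. The resulting $3$-block partition has exactly three adjacencies of \emph{distinct} blocks along which (iv$'$) of \Cref{def:admissible} can be applied: the pair of copies created by the split (factoring there re-unites the split block and returns the trivial identity, as you observe), and the two extremal adjacencies $1$--$2$ and $7$--$8$ (usable after changing the face of the extremal leg by \ref{it:highest-coeff:change-color}). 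Factoring along an extremal adjacency remembers a factor consisting of one piece of the split block together with the \emph{entire} other block, hence a factor with at least five legs (at least six unless the split is at leg $2$ or leg $7$). So no choice of split point and factoring adjacency peels off a $2$-block factor with at most four legs, and the uniform case analysis you hope for (``peel a $4$-leg nesting or a $4$-leg crossing'') cannot exist.

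The paper's proof succeeds by dropping exactly the constraint that kills your induction: it inducts on the \emph{total} number of legs and allows the remembered factor to be large. After the same normalisation (legs $1,2$ in different blocks with a common face, say $1,3\in\beta$; the case $2,3\in\beta$ is symmetric), either all legs after leg $3$ lie in $\gamma$, in which case they are outer and collapse to yield a $4$-leg partition, or one splits $\beta$ at leg $3$, obtaining $\beta_1=\{1,3\}$, and factors along the $1$--$2$ adjacency: $\alpha_\pi=\alpha_\sigma\cdot\alpha_\tau$, where $\tau=\{\beta_1,\gamma\}$ has strictly fewer legs than $\pi$ (though possibly many more than four), and the remainder $\sigma$, whose first three legs lie in one block and collapse, has exactly $n-1$ legs. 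Strong induction applied to \emph{both} factors then terminates at $2$-block partitions with at most four legs; in the example above, the first step peels off the $6$-leg factor $\{\{1,3\},\{2,4,6,8\}\}$, which only in a second round collapses to a $4$-leg crossing. If you replace your requirement ``the remembered factor has at most four legs and the remainder loses one inner leg'' by ``both factors have strictly fewer legs than $\pi$'', your argument becomes precisely the paper's proof.
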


\begin{proof}
  Suppose that $\pi=\{\beta,\gamma\}$ has more than $4$ legs and that the third leg lies in $\beta$. Without loss of generality, we assume that the first leg and the second leg belong different blocks but the same face; if they would belong to the same block, they could be collapsed and the face of the first leg can simply be adapted to that of the second leg without changing the coefficient. Without loss of generality assume that $1\in \beta$. If all legs after the third leg belong to $\gamma$, they are necessarily outer and can be collapsed to reach a partition with four legs. If there is at least one leg from $\beta$ after the third leg, then splitting $\beta$ at the third leg yields a partition $\tilde\pi=\{\beta_{1},\beta_{2},\gamma\}$ where $\beta_{1}=\{1,3\}$ has two legs and $\beta_{2}=\{3'\}\cup (\beta\setminus\{1,3\})$ has exactly one leg less than $\beta$; here $3'$ is the copy of $3$ obtained from splitting such that $3<3'$. Now, $\alpha_\pi=\alpha_{\tilde\pi}=\alpha_{\tilde\pi_{\beta_1\smile \gamma}}\alpha_{\{\beta_1,\gamma\}}$. Obviously, $\tau:=\{\beta_1,\gamma\}$ has strictly less legs than $\pi$. Since the first three legs of $\tilde\pi_{\beta_1\smile \gamma}$ belong to the same block, after collapsing those three legs, we get a partition $\sigma$ with $\alpha_\sigma=\alpha_{\tilde\pi_{\beta_1\smile \gamma}}$ which has one leg less than $\pi$ (one leg more from the splitting are overcompensated by two legs less from collapsing). All in all, $\alpha_\pi=\alpha_\tau\alpha_\sigma$, where both, $\tau$ and $\sigma$ are two-block partitions with a strictly smaller number of legs than $\pi$. This procedure can be iterated until $\alpha_\pi$ is expressed as a product of only 2-block partitions with at most 4 legs.
\end{proof}

\begin{definition}
  We introduce shorthand notations for the \emph{basic coefficients}, where ${\wsquare},{\bsquare}\in\mathcal F$: 
   \begin{align*}
    \nu_{\wsquare}&:=\alpha\left(\sqnestcirc\right), &\xi_{\wsquare}&:=\alpha\left(\sqcrosscirccirc\right),& \nu_{\wsquare\bsquare}&:=\alpha\left(\sqnestcircbullet\right), &\xi_{\wsquare\bsquare}&:=\alpha\left(\sqcrosscircbullet\right)
   \end{align*}
   (Note that $\xi_{\wsquare\wsquare} =\xi_{\wsquare}$, obviously, and $\nu_{\wsquare\wsquare}=\nu_{\wsquare}$, because we can merge neighboring legs of the same face.)
\end{definition}

\begin{corollary}
  Two admissible families coincide if and only if they have the same basic coefficients.
\end{corollary}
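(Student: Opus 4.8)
The plan is to reduce the statement to a finite check via the two preceding results. One direction is immediate: if the families agree on all partitions, they agree in particular on the handful of diagrams defining the basic coefficients. So the whole content lies in the converse, and here the natural tool is \Cref{lem:coincide-on-2block4leg}, which already tells us that two admissible families coincide as soon as they agree on every two-block partition with at most four legs. Thus I would only need to verify that, for an admissible family, the value on any such partition is completely determined by the basic coefficients $\nu$ and $\xi$.

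Since there are only $S(2,2)+S(3,2)+S(4,2)=1+3+7=11$ set partitions of at most four points into exactly two blocks, I would simply run through them, sorted by their arc-diagram shape. The interval partitions (including the two-leg partition $\{\{1\},\{2\}\}$) all carry the value $1$, by \Cref{it:two-point-partition} and \Cref{lem:admissible-weights-simple-properties} part~(1); this clears the two-leg case and all interval shapes on three and four legs. The remaining shapes are nestings and crossings.

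For the nestings I would argue as follows. The three-leg nesting $\{\{1,3\},\{2\}\}$ is, after using \Cref{it:highest-coeff:change-color} to normalize the colors of the two extremal legs, exactly the defining diagram of $\nu_{\mathbf f(2)}$. Each four-leg partition of type $(3,1)$ with an inner singleton, namely $\{\{1,3,4\},\{2\}\}$ and $\{\{1,2,4\},\{3\}\}$, has the two legs of its three-element block that sit at an end of the diagram as connected outer legs; collapsing them by \Cref{lem:admissible-weights-simple-properties} part~(4) returns a three-leg nesting, so these again equal $\nu_{\mathbf f(2)}$ and $\nu_{\mathbf f(3)}$. The four-leg nesting $\{\{1,4\},\{2,3\}\}$, after normalizing the extremal colors, is the diagram of $\nu_{\mathbf f(2)\mathbf f(3)}$. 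Finally, the unique four-leg crossing $\{\{1,3\},\{2,4\}\}$ has extremal legs $1$ and $4$ whose colors are irrelevant by \Cref{it:highest-coeff:change-color}, so its value is $\xi_{\mathbf f(2)\mathbf f(3)}$.

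Having expressed every two-block partition with at most four legs through the basic coefficients, equality of those coefficients forces equality on all such partitions, and \Cref{lem:coincide-on-2block4leg} then yields the coincidence of the two families. No step is a serious obstacle; the only point requiring care is the bookkeeping of which legs are \emph{inner} versus \emph{outer} in the $(3,1)$ four-leg shapes, to be sure that the collapse in \Cref{lem:admissible-weights-simple-properties} part~(4) genuinely lands on the three-leg nesting and does not accidentally absorb the inner leg.
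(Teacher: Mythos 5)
Your proof is correct and follows essentially the same route as the paper's: both reduce via \Cref{lem:coincide-on-2block4leg} to two-block partitions with at most four legs, and then observe that each such partition is either an interval partition (coefficient $1$) or reduces, by changing the colors of extremal legs and collapsing/merging outer legs, to one of the diagrams defining the basic coefficients. The paper states this reduction in one sentence; you have merely made the enumeration of the eleven shapes explicit, and your bookkeeping of inner versus outer legs in the $(3,1)$ cases is accurate.
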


\begin{proof}
  A two-block partition with at most four legs is either an interval partition (in which case its coefficient is 1) or it can be reduced by changing color and combining legs to one of the partitions that define the basic coefficients.
\end{proof}

\begin{lemma}\label{lem:relations-for-basic-coefficients}
  We have the following relations between the basic coefficients for all ${\wsquare},{\bsquare}\in\mathcal F$:
  \begin{enumerate}
  \item $\nu_{\wsquare}^2=\nu_{\wsquare},\xi_{\wsquare}^2=\xi_{\wsquare}$
    , i.e.\ $\nu_{\wsquare},\xi_{\wsquare}\in\{0,1\}$,
  \item $t\nu_\wsquare=t$ for $t\in\{\nu_{\wsquare\bsquare},\xi_{\wsquare},\xi_{\wsquare\bsquare}\}$, i.e.\ $\nu_{\wsquare}=0\implies \nu_{\wsquare\bsquare}=\xi_{\wsquare}=\xi_{\wsquare\bsquare}=0$,
  \item $|t|^2t=t$ for $t\in\{\nu_{\wsquare\bsquare},\xi_{\wsquare\bsquare}\}$, i.e.\ $\nu_{\wsquare\bsquare},\xi_{\wsquare\bsquare}\in\{0\}\cup\mathbb T$,
  \item $\nu_{\wsquare\bsquare}\xi_{\wsquare}=\xi_{\wsquare\bsquare}\xi_\wsquare
$, i.e.\ $\xi_{\wsquare}=1\implies \nu_{\wsquare\bsquare}=\xi_{\wsquare\bsquare}$,
  \item $\nu_{\wsquare\bsquare}\xi_{\wsquare\bsquare}=\nu_{\wsquare\bsquare}\xi_{\wsquare\bsquare}\xi_{\wsquare} $, i.e.\ $\xi_{\wsquare}=0\implies \nu_{\wsquare\bsquare}=0$ or $\xi_{\wsquare\bsquare}=0$.
  \end{enumerate}
\end{lemma}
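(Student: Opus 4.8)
All five relations have the same shape: a product of basic coefficients equals another such product. I would prove each by exhibiting one (occasionally two) partition $\pi$ whose weight $\alpha_\pi$ can be evaluated in two different ways, by repeatedly applying the factorisation property~\ref{it:highest-coeff:split} (equivalently (iv$'$) of \Cref{def:admissible}): whenever two blocks carry neighbouring legs of the same colour, replace them by the product of the weight of the merged partition and the weight of the isolated two-block partition. Since admissible weights are invariant under block permutation, \emph{any} two blocks with neighbouring same-colour legs may be paired, so the result must not depend on the order in which the pairings are performed; equating two such evaluations yields the identity. Throughout I would freely use that interval partitions have weight $1$ (\Cref{it:interval-coefficients}), that splitting a block at a leg leaves the weight unchanged (\Cref{it:splitting}), multiplicativity under concatenation (\Cref{it:concat}), and invariance under reduction~\ref{it:highest-coeff:red} and under recolouring extremal legs~\ref{it:highest-coeff:change-color}; for the modulus relations I would additionally invoke the conjugation symmetry $\alpha_{\overline\pi}=\overline{\alpha_\pi}$ of~\ref{it:highest-coeff:symmetry}.

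\textbf{Idempotency (1).} For $\nu_\wsquare^2=\nu_\wsquare$ I would take the monochromatic $\pi=\{\{1,4\},\{2\},\{3\}\}$, an arc over two nested singletons. Uniting the two inner singletons contributes the factor $\alpha(\{\{2\},\{3\}\})=1$ and leaves $\{\{1,4\},\{2,3\}\}$, which reduces to $\nu_\wsquare$, so $\alpha_\pi=\nu_\wsquare$; uniting the arc with one singleton instead yields two copies of $\nu_\wsquare$, so $\alpha_\pi=\nu_\wsquare^2$, whence $\nu_\wsquare\in\{0,1\}$. The crossing case is harder, since no single pairing shortens a crossing to $\xi_\wsquare$. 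I would first record the auxiliary identity $\alpha(\{\{1,3,5\},\{2,4\}\})=\xi_\wsquare^2$: splitting the three-leg block turns this five-leg crossing into $\{\{1,3\},\{2,5\},\{4,6\}\}$, and uniting the first two (crossing) blocks of the latter gives $\xi_\wsquare\cdot\xi_\wsquare$. Then, on the all-crossing $\Theta=\{\{1,4\},\{2,5\},\{3,6\}\}$, uniting $\{1,4\}$ with $\{2,5\}$ gives $\xi_\wsquare^2$, while uniting $\{1,4\}$ with $\{3,6\}$ leaves (after reduction) exactly $\{\{1,3,5\},\{2,4\}\}$ and so gives $\xi_\wsquare\cdot\xi_\wsquare^2=\xi_\wsquare^3$. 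Hence $\xi_\wsquare^2=\xi_\wsquare^3$, forcing $\xi_\wsquare\in\{0,1\}$, i.e. $\xi_\wsquare^2=\xi_\wsquare$.

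\textbf{Absorption (2) and the mixed relations (3)--(5).} Relation (2), $t\nu_\wsquare=t$, I would obtain by a split-then-reunite argument: starting from the partition defining $t$, split one block at a leg of colour $\wsquare$ to create a nested $\wsquare$-singleton (weight unchanged by \Cref{it:splitting}); reuniting that singleton with the arc above it contributes the factor $\nu_\wsquare$ while the remainder reduces back to $t$. For example $\{\{1,5\},\{2\},\{3,4\}\}$ (split off from $\nu_{\wsquare\bsquare}$) evaluates to both $\nu_{\wsquare\bsquare}$ and $\nu_\wsquare\cdot\nu_{\wsquare\bsquare}$; the cases $t=\xi_\wsquare$ and $t=\xi_{\wsquare\bsquare}$ are analogous. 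Relations (4) and (5) follow the same two-evaluations scheme on suitably two-coloured three- and four-block partitions, where one pairing exhibits the factor $\nu_{\wsquare\bsquare}$ (a mixed nesting) and another exhibits $\xi_{\wsquare\bsquare}$ (a mixed crossing), the common remaining factor being $\xi_\wsquare$ (respectively $\nu_{\wsquare\bsquare}\xi_{\wsquare\bsquare}$). For the modulus relation (3), $|t|^2t=t$, I would run the argument of (1) on a two-coloured copy of $\Theta$ and its auxiliary longer crossing, choosing the colouring so that the three crossing factors read as $\xi_{\wsquare\bsquare}$, its mirror image $\xi_{\bsquare\wsquare}$, and $\xi_{\wsquare\bsquare}$; since $\alpha_{\overline\pi}=\overline{\alpha_\pi}$ gives $\xi_{\bsquare\wsquare}=\overline{\xi_{\wsquare\bsquare}}$, the longer crossing then contributes $|\xi_{\wsquare\bsquare}|^2$, and comparing the two pairing orders yields $|\xi_{\wsquare\bsquare}|^2\xi_{\wsquare\bsquare}=\xi_{\wsquare\bsquare}$; the nesting case is identical with $\nu$ in place of $\xi$.

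\textbf{Main obstacle.} The routine direction is the nesting/absorption part, where a factor-$1$ interval step peels off the required coefficient cleanly. The genuine difficulty is the crossing idempotency and its modulus refinement (3): unlike a nesting, a two-block crossing cannot be shortened to the basic partition $\xi_\wsquare$ by any single pairing, because uniting two interval-separated blocks always produces a \emph{longer} crossing. The crux is therefore the auxiliary computation identifying such a longer crossing with $\xi_\wsquare^2$ (respectively $|\xi_{\wsquare\bsquare}|^2$), which is what ultimately forces $\xi_\wsquare^2=\xi_\wsquare^3$. For relation (3) the extra subtlety is to two-colour the legs of $\Theta$ and of the auxiliary crossing so that the reductions still merge the intended legs \emph{and} the mirror symmetry produces the conjugate factor; keeping these colourings mutually consistent is the most delicate bookkeeping in the proof.
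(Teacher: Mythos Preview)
Your overall strategy---evaluate a well-chosen partition in two different ways using the factorisation rule~(iv$'$)---is exactly the paper's approach, and your treatments of~(1) and~(2) are correct; in fact your direct argument for $\xi_\wsquare^2=\xi_\wsquare$ via $\Theta=\{\{1,4\},\{2,5\},\{3,6\}\}$ is more explicit than the paper, which simply observes that~(1) follows from the later items (e.g.\ $\xi_\wsquare\nu_\wsquare=\xi_\wsquare$ from~(2) and $\nu_\wsquare\xi_\wsquare=\xi_\wsquare^2$ from~(4) with $\bsquare=\wsquare$).

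There is, however, a genuine gap in your sketch for~(3) in the crossing case. Running the $\Theta$-argument with a two-colouring requires \emph{both} pairing orders to be available, i.e.\ the legs at positions $4,5$ (for the ``product of two'' side) and at positions $3,4$ (for the ``product of three'' side) must each be same-coloured; this forces $c_3=c_4=c_5$, and the remaining colour freedom is only at the extremal legs, so the argument collapses back to the monochromatic case and yields nothing beyond~(1). The paper avoids this by \emph{not} using an all-crossing test partition: it takes $\{\{1,4\},\{2,6\},\{3,5\}\}$ with inner colours $\wsquare,\wsquare,\bsquare,\wsquare$, so that one pairing (of $\{2,6\}$ with the nested $\{3,5\}$) contributes a \emph{nesting} factor $\nu_\wsquare$, which~(2) then removes, while the other pairing contributes $\xi_{\wsquare\bsquare}$ times the auxiliary $|\xi_{\wsquare\bsquare}|^2$. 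Thus the key idea you are missing for~(3) is to route the ``short'' side through $\nu_\wsquare$ rather than through another crossing; your nesting case actually does this implicitly (a triple nesting naturally produces a $\nu_\wsquare$ factor), which is why it works there but not for $\xi$. The same device---arranging one evaluation to peel off a $\nu_\wsquare$---is also what makes~(4) and~(5) go through cleanly in the paper.
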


\begin{proof}\ 
  \begin{enumerate}[beginpenalty=10000]
  \item This follows as in the single-faced case, see \cite{Spe97}. Alternatively, this follows easily as a special case $\wsquare=\bsquare$ from the items below.
  \item Consider $\sqnestcircbullet$. Split the inner $\wsquare$-leg and merge it's copy with the outer block to obtain $\alpha\left(\sqnestcircbullet\right)=\alpha\left(\sqnestcircbullet\right)\alpha\left(\sqnestcirc\right)$. The other cases work analogously. 
  \item First note that $\alpha\left(\sqpartxyyoyx\right)=\alpha\left(\sqpartxyyozozx\right)=|\nu_{\wsquare\bsquare}|^2$. This leads to
    \begin{align*}
      |\nu_{\wsquare\bsquare}|^2\nu_{\wsquare\bsquare}=\alpha\left(\sqpartxyyoyx\right)\nu_{\wsquare\bsquare}=\alpha\left(\sqpartxyzzoyx\right)=\nu_{\wsquare\bsquare}\nu_\wsquare=\nu_{\wsquare\bsquare}.
    \end{align*}
    Similarly, $\alpha\left(\sqpartxyxoyx\right)=\alpha\left(\sqpartxyxozoyz\right)=|\xi_{\wsquare\bsquare}|^2$ and hence
    \begin{align*}
      |\xi_{\wsquare\bsquare}|^2\xi_{\wsquare\bsquare}
      = \alpha\left(\sqpartxyxoyx\right) \xi_{\wsquare\bsquare}
      = \alpha\left(\sqpartxyzxozy\right)
      = \xi_{\wsquare\bsquare}\nu_\wsquare=\xi_{\wsquare\bsquare}.
    \end{align*}
  \item This follows from
    \begin{align*}
      \nu_{\wsquare\bsquare}\xi_{\wsquare}=\alpha\left(\sqpartxyzyzox\right)=\alpha\left(\sqpartxyxyox\right)\nu_{\wsquare\wsquare}=\alpha\left(\sqpartxyxzyoz\right)\nu_{\wsquare}=\xi_{\wsquare\bsquare}\xi_\wsquare\nu_\wsquare=\xi_{\wsquare\bsquare}\xi_\wsquare.
    \end{align*}
  \item Reusing parts of the calculation above, we find
    \begin{equation*}
      \nu_{\wsquare\bsquare}\xi_{\wsquare\bsquare}=\alpha\left(\sqpartxyzyozox\right)=\alpha\left(\sqpartxyxyox\right)\nu_{\wsquare\bsquare}=\nu_{\wsquare\bsquare}\xi_{\wsquare\bsquare}\xi_\wsquare. \qedhere 
    \end{equation*}
  \end{enumerate}
\end{proof}

\begin{corollary}\label{cor:basic-partitions}
  Two admissible sets of partitions coincide if and only if they 
  have the same intersection with $\bigl\{\sqnestcirc,\sqnestbullet,\sqnestcircbullet,\sqcrosscirccirc,\sqcrossbulletbullet,\sqcrosscircbullet:\wsquare,\bsquare\in\mathcal F\bigr\}$.
  
  Furthermore, for an admissible set $\Pi$ we have the following implications:
    \begin{enumerate}
  \item  If $\Pi$ contains at least one of the partitions $\sqnestcircbullet$, $\sqcrosscirccirc$, $\sqcrosscircbullet$, then it contains $\sqnestcirc$. 
  \item If $\Pi$ contains at least one of the partitions $\sqnestcircbullet$, $\sqcrossbulletbullet$, $\sqcrosscircbullet$, then it contains $\sqnestbullet$. 
  \item If $\Pi$ contains two of the basic partitions $\sqnestcircbullet,\sqcrosscirccirc,\sqcrosscircbullet$, then $\Pi$ contains all partitions with faces from $\{\wsquare,\bsquare\}$.
  \item If $\Pi$ contains two of the basic partitions $\sqnestcircbullet,\sqcrossbulletbullet,\sqcrosscircbullet$, then $\Pi$ contains all partitions with faces from $\{\wsquare,\bsquare\}$.
  \end{enumerate}
\end{corollary}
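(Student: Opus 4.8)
The plan is to reduce everything to the preceding corollary (two admissible families coincide iff they share all basic coefficients $\nu_{\wsquare},\xi_{\wsquare},\nu_{\wsquare\bsquare},\xi_{\wsquare\bsquare}$, $\wsquare,\bsquare\in\mathcal F$) together with the arithmetic relations of \Cref{lem:relations-for-basic-coefficients}. For an admissible \emph{set} every weight is $0$ or $1$, so all basic coefficients lie in $\{0,1\}$, and the value $1$ of a basic coefficient is the same as membership of the corresponding basic partition in $\Pi$. Matching the single-face nests $\sqnestcirc,\sqnestbullet$ with the $\nu_{\wsquare}$, the single-face crossings $\sqcrosscirccirc,\sqcrossbulletbullet$ with the $\xi_{\wsquare}$, and the two-faced diagrams $\sqnestcircbullet,\sqcrosscircbullet$ with $\nu_{\wsquare\bsquare},\xi_{\wsquare\bsquare}$, the six listed diagrams (as $\wsquare,\bsquare$ range over $\mathcal F$) index exactly the basic coefficients. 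The first assertion is then immediate: two admissible sets coincide iff they have the same basic coefficients iff they meet the six-diagram family in the same partitions.

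For the implications I would translate each statement into the language of basic coefficients and invoke \Cref{lem:relations-for-basic-coefficients}. Implication~(1) is precisely the contrapositive of relation~(2) of that lemma: $\nu_{\wsquare}=0$ forces $\nu_{\wsquare\bsquare}=\xi_{\wsquare}=\xi_{\wsquare\bsquare}=0$, so if any of $\sqnestcircbullet,\sqcrosscirccirc,\sqcrosscircbullet$ lies in $\Pi$ then $\nu_{\wsquare}=1$, i.e.\ $\sqnestcirc\in\Pi$. Implication~(2) is implication~(1) applied to the pair $(\bsquare,\wsquare)$, once one observes that for $0/1$-valued (hence real) coefficients the mirror symmetry (\Cref{it:highest-coeff:symmetry} in \Cref{thm:properties-of-highest-coeffs}) gives $\nu_{\bsquare\wsquare}=\overline{\nu_{\wsquare\bsquare}}=\nu_{\wsquare\bsquare}$ and $\xi_{\bsquare\wsquare}=\xi_{\wsquare\bsquare}$; thus the mixed coefficients do not depend on the order of the two faces, and the hypotheses of the two implications carry the same data.

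For implication~(3) the strategy is to show that the hypothesis (two of $\nu_{\wsquare\bsquare},\xi_{\wsquare},\xi_{\wsquare\bsquare}$ equal to $1$) already forces \emph{all six} basic coefficients over $\{\wsquare,\bsquare\}$ to equal $1$, whence $\Pi$ agrees with the all-ones admissible family on $\{\wsquare,\bsquare\}$ by the first assertion. First, the third of the three coefficients is forced: if $\nu_{\wsquare\bsquare}=\xi_{\wsquare}=1$ then relation~(4) gives $\xi_{\wsquare\bsquare}=\nu_{\wsquare\bsquare}=1$; if $\xi_{\wsquare}=\xi_{\wsquare\bsquare}=1$ then relation~(4) gives $\nu_{\wsquare\bsquare}=\xi_{\wsquare\bsquare}=1$; and if $\nu_{\wsquare\bsquare}=\xi_{\wsquare\bsquare}=1$ then relation~(5) (whose left side $\nu_{\wsquare\bsquare}\xi_{\wsquare\bsquare}$ is nonzero) gives $\xi_{\wsquare}=1$. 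Hence in all cases $\nu_{\wsquare\bsquare}=\xi_{\wsquare}=\xi_{\wsquare\bsquare}=1$. Implications~(1) and~(2) then yield $\nu_{\wsquare}=\nu_{\bsquare}=1$, and the last coefficient $\xi_{\bsquare}$ comes from the face-swapped relation~(5), $\nu_{\bsquare\wsquare}\xi_{\bsquare\wsquare}=\nu_{\bsquare\wsquare}\xi_{\bsquare\wsquare}\xi_{\bsquare}$: by mirror symmetry $\nu_{\bsquare\wsquare}=\xi_{\bsquare\wsquare}=1$, so the prefactor is nonzero and $\xi_{\bsquare}=1$. All six basic coefficients over $\{\wsquare,\bsquare\}$ thus equal $1$, so $\Pi$ contains every partition with faces in $\{\wsquare,\bsquare\}$. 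Implication~(4) is implication~(3) with $\wsquare$ and $\bsquare$ exchanged, the conclusion being symmetric in the two faces.

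The main obstacle I anticipate is the last coefficient $\xi_{\bsquare}$: relations~(4) and~(5) are phrased asymmetrically through $\xi_{\wsquare}$, so the ``other'' single-face crossing is not directly reachable and one must pass to the face-swapped relation~(5) and use mirror symmetry to know a priori that $\nu_{\bsquare\wsquare}$ and $\xi_{\bsquare\wsquare}$ are nonzero. A secondary point requiring care is the reduction in the conclusion of~(3) when $\mathcal F$ has more than two faces: one should note that the operations of \Cref{obs:operations-for-partitions} never introduce a new face, so the membership of a partition whose faces lie in $\{\wsquare,\bsquare\}$ is determined by the basic coefficients over those two faces alone, and the first assertion may be applied ``two faces at a time''.
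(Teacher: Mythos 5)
Your proposal is correct and takes essentially the same route the paper intends: the paper states this corollary without a separate proof, as an immediate consequence of the preceding corollary (admissible families are determined by their basic coefficients, which for an admissible set are the indicator values of the six basic diagrams) together with the relations of \Cref{lem:relations-for-basic-coefficients}, and your write-up is exactly that translation. Your additional care with mirror symmetry (identifying $\nu_{\bsquare\wsquare}=\nu_{\wsquare\bsquare}$ and $\xi_{\bsquare\wsquare}=\xi_{\wsquare\bsquare}$ for $0/1$-valued weights) and with applying the reduction ``two faces at a time'' when $\mathcal F$ has more than two faces correctly fills in details the paper leaves implicit.
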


\begin{definition}\label{def:admissible-sets-concrete}
  A 2-faced partition $\pi$ is called
  \begin{itemize}
  \item \emph{interval partition} if
    all legs are outer or, equivalently, if all its blocks are intervals;
    $\mathrm{I_{\wcol\bcol}}$ denotes the set of all interval partitions,
  \item \emph{noncrossing} if for all $i<j<k<\ell$ and blocks $\beta, \gamma\in\pi$,
    \[i,k\in \beta,\ j,\ell\in \gamma\implies \beta=\gamma;\]
    $\mathrm {NC_{\wcol\bcol}}$  denotes the set of all noncrossing partitions,
  \item \emph{binoncrossing} if for all $i<j<k<\ell$ and blocks $\beta\neq \gamma\in\pi$,
    \begin{align*}
      i,k\in \beta,\ j,\ell\in \gamma\implies {\mathbf{f}}(j)\neq{\mathbf{f}}(k),\\
      i,\ell\in \beta,\ j,k\in \gamma \implies {\mathbf{f}}(j)={\mathbf{f}}(k);
    \end{align*}
  \item \emph{interval-noncrossing} if it is noncrossing and all $\wcol$-legs are outer;
    $\mathrm{I_\wcol NC_\bcol}$ denotes the set of all interval-noncrossing partitions,
  \item \emph{noncrossing-interval} if it is interval-noncrossing after swapping the colors $\wcol$ and $\bcol$; 
    $\mathrm{NC_\wcol I_\bcol}$ denotes the set of all noncrossing-interval partitions,  
  \item \emph{interval-arbitrary} if
    all $\wcol$-legs are outer; $\mathrm{I_\wcol A_\bcol}$ denotes the set of all interval-arbitrary partitions,
  \item  \emph{arbitrary-interval} if it is interval-arbitrary after swapping the colors $\wcol$ and $\bcol$; 
    $\mathrm{A_\wcol I_\bcol}$ denotes the set of all arbitrary-interval partitions,
  \item \emph{noncrossing-arbitrary} if
    every block that contains an inner $\wcol$-leg is monochrome and does not cross any other block, i.e.\  for all legs $i,j,k,\ell$ and all blocks $\beta\neq\gamma\in\pi$, 
    \[j,k\in\beta,\ i,\ell\in\gamma,\ i<k<\ell,\ \mathbf f(k)=\wcol\implies i<j<\ell,\ \mathbf f(j)=\wcol;\] 
$\mathrm{NC_\wcol A_\bcol}$ denotes the set of all noncrossing-arbitrary partitions,
  \item  \emph{arbitrary-noncrossing} if it is noncrossing-arbitrary after swapping the colors $\wcol$ and $\bcol$; 
    $\mathrm{A_\wcol NC_\bcol}$ denotes the set of all arbitrary-interval partitions,  
  \item \emph{pure noncrossing} 
    if it is noncrossing and all inner blocks are monochrome;
    $\mathrm{pNC_{\wcol\bcol}}$ denotes the set of all pure noncrossing partitions,
  \item \emph{pure crossing} if
    connected inner legs have the same color;
    $\mathrm{pC_{\wcol\bcol}}$ denotes the set of all pure noncrossing partitions,
  \item \emph{arbitrary} without any conditions; the set of all bipartitions is also denoted $\mathrm{A_{\wcol\bcol}}$.
  \end{itemize}
\end{definition}

\begin{theorem}\label{thm:classification-two-faced-admissible-sets} There are exactly 12 admissible sets of 2-faced partitions (9 if we identify a set with the one obtained by simply swapping the two colors), namely those given in \Cref{def:admissible-sets-concrete}. \Cref{fig:hasseset} displays their respective containment by means of a Hasse diagram and gives minimal generating sets of 2-block partitions.
\end{theorem}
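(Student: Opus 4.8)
The plan is to reduce the whole statement to a finite check on the six \emph{basic coefficients} and then to realize each surviving possibility by one of the sets of \Cref{def:admissible-sets-concrete}. By \Cref{cor:basic-partitions} an admissible set $\Pi$ is completely determined by its intersection with the basic partitions. In the two-faced case this intersection is recorded by the eight numbers $\nu_\wcol,\nu_\bcol,\xi_\wcol,\xi_\bcol,\nu_{\wcol\bcol},\nu_{\bcol\wcol},\xi_{\wcol\bcol},\xi_{\bcol\wcol}\in\{0,1\}$, but mirror symmetry cuts this down: since $0$--$1$ valued weights are real, \ref{it:highest-coeff:symmetry} gives $\alpha_{\overline\pi}=\alpha_\pi$, and as mirroring fixes each monochrome basic partition while interchanging the two orientations of the mixed nesting and of the mixed crossing, we get $\nu_{\wcol\bcol}=\nu_{\bcol\wcol}$ and $\xi_{\wcol\bcol}=\xi_{\bcol\wcol}$. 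Thus I would set up a bijection between admissible sets and those $\{0,1\}$-tuples $(\nu_\wcol,\nu_\bcol,\xi_\wcol,\xi_\bcol,\nu_{\wcol\bcol},\xi_{\wcol\bcol})$ that satisfy the constraints of \Cref{lem:relations-for-basic-coefficients} together with the implications in \Cref{cor:basic-partitions}, and then simply count them.

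I would organise the enumeration by the monochrome data. By \Cref{lem:relations-for-basic-coefficients} specialised to a single colour one has $\xi_\wsquare\le\nu_\wsquare$, so each colour's pair $(\nu,\xi)$ is one of $(0,0),(1,0),(1,1)$, which I label \emph{Boolean}, \emph{free}, \emph{tensor}; this gives nine colour-combinations, and for each I read off the admissible values of the mixed pair $(\nu_{\wcol\bcol},\xi_{\wcol\bcol})$ from \Cref{lem:relations-for-basic-coefficients} and \Cref{cor:basic-partitions}. Four regimes occur: if \emph{at least one} colour is Boolean, then $\nu_{\wcol\bcol}=\xi_{\wcol\bcol}=0$ is forced (one value each, covering five combinations); if \emph{both} are free, the only surviving constraint is that $\nu_{\wcol\bcol},\xi_{\wcol\bcol}$ are not both $1$, giving the three values $(0,0),(1,0),(0,1)$; if one is free and one tensor, then $\xi$ of the tensor colour $=1$ forces $\nu_{\wcol\bcol}=\xi_{\wcol\bcol}$ while $\xi$ of the free colour $=0$ forbids both being $1$, leaving only $(0,0)$; and if both are tensor, then $\nu_{\wcol\bcol}=\xi_{\wcol\bcol}$ with both $(0,0)$ and $(1,1)$ allowed. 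Summing yields $1+1+1+1+3+1+1+1+2=12$ tuples (nine after colour-swap identification), so there are \emph{at most} twelve admissible sets.

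To see that each tuple is actually attained — and to name it — I would, for every set in \Cref{def:admissible-sets-concrete}, first verify directly that it is closed under the operations of \Cref{obs:operations-for-partitions} (so that it is genuinely admissible), and then compute its basic coefficients from its defining combinatorial condition. For instance $\mathrm{pNC_{\wcol\bcol}}$ has tuple $(1,1,0,0,0,0)$, its only forbidden basic partitions being the monochrome and mixed crossings and the \emph{bichromatic} inner nesting $\sqnestcircbullet$; $\mathrm{NC_{\wcol\bcol}}$ differs only by admitting $\sqnestcircbullet$ (tuple $(1,1,0,0,1,0)$), $\mathrm{biNC_{\wcol\bcol}}$ only by admitting $\sqcrosscircbullet$ (tuple $(1,1,0,0,0,1)$), $\mathrm{pC_{\wcol\bcol}}$ is $(1,1,1,1,0,0)$, and $\mathrm{A_{\wcol\bcol}}$ is the all-ones tuple; the Boolean/free/tensor hybrids $\mathrm{I_\wcol NC_\bcol}$, $\mathrm{I_\wcol A_\bcol}$, $\mathrm{NC_\wcol A_\bcol}$ and their colour-swaps fill in the remaining tuples. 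Matching the twelve named sets to the twelve tuples, and invoking \Cref{cor:basic-partitions} to conclude that a named admissible set with a given basic trace is \emph{the} admissible set with that trace, establishes the bijection and hence the exact count. This middle step is the main obstacle: the arithmetic of the enumeration is light, but correctly reading off the basic coefficients of all twelve named sets and checking closure under every operation for each is where essentially all the combinatorial work, and the risk of error, lies.

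Finally, for the Hasse diagram and the generating sets I would use that every admissible set equals the admissible set generated by its own basic partitions: one has $\Pi\supseteq\langle\operatorname{basic}(\Pi)\rangle$, both sides are admissible with the same basic trace, so they agree by \Cref{cor:basic-partitions}. Consequently $\Pi_1\subseteq\Pi_2$ if and only if $\operatorname{basic}(\Pi_1)\subseteq\operatorname{basic}(\Pi_2)$, i.e.\ if and only if the six-tuples compare componentwise, and the covering relations of this finite poset are exactly those of \Cref{fig:hasseset}. The minimal generating sets are then verified by computing admissible closures: $\langle\sqnestcircbullet\rangle$ forces $\nu_\wcol=\nu_\bcol=1$ by \Cref{cor:basic-partitions} and nothing further, giving $\mathrm{NC_{\wcol\bcol}}$; likewise $\langle\sqnestcirc,\sqnestbullet\rangle=\mathrm{pNC_{\wcol\bcol}}$, $\langle\sqcrosscirccirc\rangle=\mathrm{A_\wcol I_\bcol}$, $\langle\sqnestcirc\rangle=\mathrm{NC_\wcol I_\bcol}$, and $\langle\,\rangle=\mathrm{I_{\wcol\bcol}}$, with minimality clear since each listed generator is a basic partition whose coefficient is not produced by the admissible closure of the others.
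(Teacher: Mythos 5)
Your proposal is correct and follows the same overall strategy as the paper's proof: reduce the classification to the basic coefficients via \Cref{cor:basic-partitions}, enumerate the possible $0$--$1$ values using \Cref{lem:relations-for-basic-coefficients}, and realize each surviving possibility by one of the sets of \Cref{def:admissible-sets-concrete}. The differences are in how the two halves are discharged. For admissibility of the twelve named sets, you propose a direct combinatorial check of closure under all operations of \Cref{obs:operations-for-partitions}; the paper instead gets eight of the twelve for free by invoking \Cref{thm:properties-of-highest-coeffs} together with the known positivity of the Boolean, free, tensor, bifree, free-Boolean and tensor-Boolean products (and their colour swaps), so that only $\mathrm{pNC_{\wcol\bcol}}$, $\mathrm{pC_{\wcol\bcol}}$ and $\mathrm{NC_\wcol A_\bcol}$ (up to swapping colours) require the direct verification. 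Your route is self-contained but carries exactly the verification burden you identify as the risk point; the paper's is shorter but leans on the literature. Conversely, your write-up is more explicit than the paper's where the counting happens: the paper compresses the entire enumeration into the statement that the theorem ``follows from'' \Cref{cor:basic-partitions}, whereas you organize it by monochrome type (Boolean/free/tensor per colour), use mirror symmetry plus realness of $0$--$1$ weights to cut the eight basic coefficients down to six, and arrive at the count $1+1+1+1+3+1+1+1+2=12$; I checked this case analysis against \Cref{lem:relations-for-basic-coefficients} and it is correct (in the free--tensor case note that relation (4) is applied to the tensor colour and then transported by the mirror identification, which your symmetry step justifies). Finally, your argument for the Hasse diagram and minimal generating sets --- every admissible set equals the admissible closure of its own basic partitions, so containment of admissible sets is decided componentwise on basic traces --- is sound and supplies detail that the paper's proof only implicitly delegates to \Cref{fig:hasseset}.
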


\begin{figure}
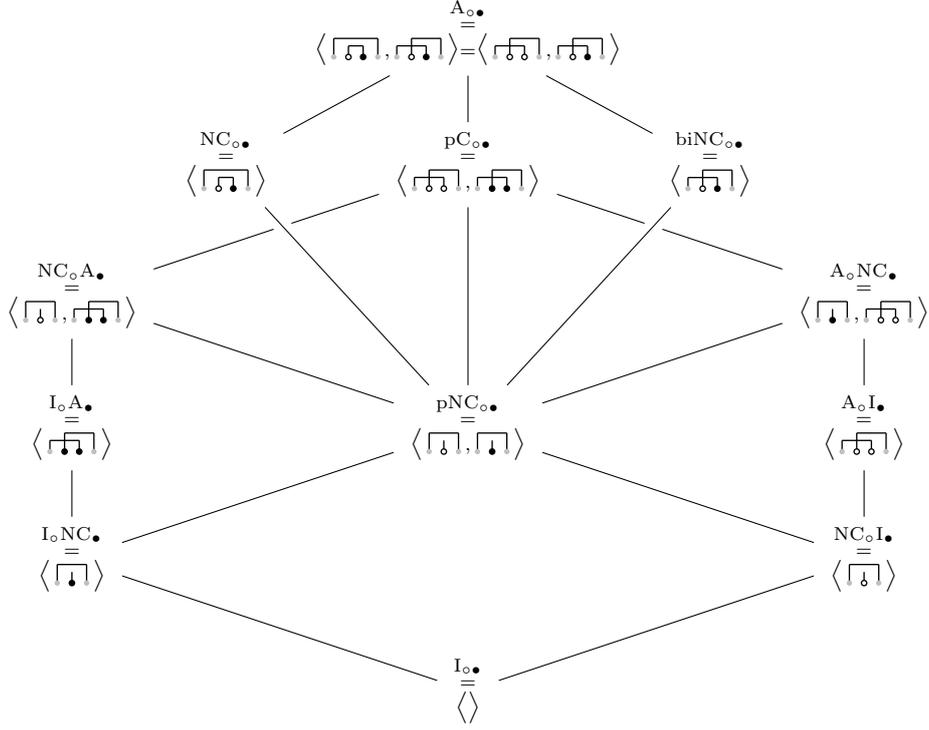

    \centering 
    \usebox{\hasseset} 
    \caption{Hasse diagram of all two-colored admissible sets of partitions.} 
    \label{fig:hasseset}
\end{figure}  

\begin{proof}
  We know that a set obtained from a positive symmetric 2-faced universal product is automatically admissible. 
  Of course, swapping the two colors turns an admissible set into an admissible set. This helps to settle admissibility of a large number of sets in the diagram:
  \begin{itemize}
  \item The sets $\mathrm{I_{\wcol\bcol}}, \mathrm{NC_{\wcol\bcol}}, \mathrm{A_{\wcol\bcol}}$ are the sets of interval, noncrossing, and all partitions (ignoring the colors), and thus are known to come from the trivially two-faced Boolean, free and tensor universal product, respectively. Swapping the colors does not change these sets of partitions.
  \item The set $\mathrm{NC_\wcol I_\bcol}$ is the set of noncrossing-interval partitions, which originates from free-Boolean independence \cite{Liu19}. Swapping the colors leads to the set $\mathrm{I_\wcol NC_\bcol}$.  
  \item The set $\mathrm{A_\wcol I_\bcol}$ comes from tensor-Boolean independence \cite{GHU23}. Swapping the colors leads to the set $\mathrm{I_\wcol A_\bcol}$.
  \item The set $\mathrm{biNC}$ is the set of binoncrossing partitions, it comes from bifree independence \cite{CNS15,Voi14}. Swapping the colors does not change the set.
  \end{itemize}
  We are left with the sets of pure crossing and pure noncrossing partitions and with the sets of noncrossing-arbitrary and arbitrary-noncrossing partitions, where again by swapping the colors it is enough to deal with the noncrossing-arbitrary ones. All properties are easily verified.

  The theorem now follows from the fact that each admissible set is uniquely determined by which basic two-block partitions have nonzero coefficients, and from the implications in \Cref{cor:basic-partitions}. 
\end{proof}

\begin{corollary}\label{cor:classification}
  Let $\odot$ be a positive symmetric 2-faced universal product. Then the admissible set of partitions
  \[\Pi_\odot:=\{\pi\in\mathcal P:\alpha_\pi\neq 0\}\]
  is one of the 12 given in \Cref{def:admissible-sets-concrete}. Furthermore:
  \begin{itemize}
  \item If $\Pi_\odot\in\{\mathrm{NC_\wcol A_\bcol},\mathrm{A_\wcol NC_\bcol},\mathrm{pNC_{\wcol\bcol}},\mathrm{pC_{\wcol\bcol}}\}$, then the highest coefficients of $\odot$ are given by the indicator function of $\Pi_\odot$, and $\odot$ does not coincide with any of the positive symmetric two-faced universal product given in \cite[Propositions 5.13 and 6.19]{GHU23}.
  \item In all other cases, $\odot$ does coincide with one of the positive symmetric two-faced universal product given in \cite[Propositions 5.13 and 6.19]{GHU23}; more concretely,
    \begin{itemize}
    \item if $\Pi_\odot=\mathrm{A}_{\wcol\bcol}$, then $\odot=\uttprod{}{\zeta}{}{\phantom{\zeta}}$ with $\zeta=\overline{\xi_{\wcol\bcol}}=\overline{\nu_{\wcol\bcol}}$ is a deformed tensor product,
    \item if $\Pi_\odot=\mathrm{NC}_{\wcol\bcol}$, then $\odot=\uffprod{}{\zeta}{}{\phantom{\zeta}}$ with $\zeta=\overline{\nu_{\wcol\bcol}}$ is a deformed free product,
    \item if $\Pi_\odot=\mathrm{biNC}_{\wcol\bcol}$, then $\odot=\ubfprod{}{\zeta}{}{\phantom{\zeta}}$ with $\zeta=\overline{\nu_{\wcol\bcol}}$ is a deformed bifree product,
    \item if $\Pi_\odot=\mathrm{I}_{\wcol}\mathrm{A}_{\bcol}$, then $\odot=\twofacedprod{\bool}{\otimes}$ is the Boolean-tensor product, 
    \item if $\Pi_\odot=\mathrm{A}_{\wcol}\mathrm{I}_{\bcol}$, then $\odot=\twofacedprod{\otimes}{\bool}$ is the tensor-Boolean product,
    \item if $\Pi_\odot=\mathrm{I}_{\wcol}\mathrm{NC}_{\bcol}$, then $\odot=\twofacedprod{\bool}{\Asterisk}$ is the Boolean-free product, 
    \item if $\Pi_\odot=\mathrm{NC}_{\wcol}\mathrm{I}_{\bcol}$, then $\odot=\twofacedprod{\Asterisk}{\bool}$ is the free-Boolean product,
    \item if $\Pi_\odot=\mathrm{I}_{\wcol\bcol}$, then $\odot=\boolboolprod$ is the Boolean product.
    \end{itemize}
  \end{itemize}
\end{corollary}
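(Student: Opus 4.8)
The first assertion is a bookkeeping step. Since $\odot$ is positive and symmetric, \Cref{thm:properties-of-highest-coeffs} shows that its highest coefficients satisfy \ref{it:highest-coeff:1-block-partitions}--\ref{it:highest-coeff:symmetry}, and symmetry makes them invariant under block permutation; hence they form an admissible family in the sense of \Cref{def:admissible}. By \Cref{obs:admissible-weights-and-sets} the support $\Pi_\odot=\{\pi:\alpha_\pi\neq0\}$ is an admissible set, so by \Cref{thm:classification-two-faced-admissible-sets} it must be one of the twelve sets of \Cref{def:admissible-sets-concrete}.

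For the refined statement I would work entirely with the basic coefficients $\nu_\wcol,\nu_\bcol,\xi_\wcol,\xi_\bcol,\nu_{\wcol\bcol},\nu_{\bcol\wcol},\xi_{\wcol\bcol},\xi_{\bcol\wcol}$, which by the corollary stating that an admissible family is determined by its basic coefficients fix the whole family $\alpha$, and, via the cumulant theory of \cite{MaSc17} recalled in \Cref{sec:moment-cumulant-relations} and re-established as \Cref{thm:reconstruction}, the product $\odot$ itself. The first task is to split the twelve cases according to whether a \emph{mixed} basic partition ($\sqnestcircbullet$ or $\sqcrosscircbullet$, or a colour swap thereof) lies in $\Pi_\odot$. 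Inspecting \Cref{def:admissible-sets-concrete} shows this happens only for $\mathrm A_{\wcol\bcol}$ (both present), $\mathrm{NC}_{\wcol\bcol}$ (only the nesting $\sqnestcircbullet$) and $\mathrm{biNC}_{\wcol\bcol}$ (only the crossing $\sqcrosscircbullet$); for the remaining nine sets every mixed basic partition is excluded, so $\nu_{\wcol\bcol}=\nu_{\bcol\wcol}=\xi_{\wcol\bcol}=\xi_{\bcol\wcol}=0$.

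In these nine ``rigid'' cases the only possibly nonzero basic coefficients are the monochrome $\nu_\wcol,\nu_\bcol,\xi_\wcol,\xi_\bcol$, which lie in $\{0,1\}$ by \Cref{lem:relations-for-basic-coefficients}(1); hence every nonzero basic coefficient equals $1$ and $\alpha$ is the indicator function of $\Pi_\odot$. For the four exceptional supports this already gives the first bullet: $\alpha=\mathbf 1_{\Pi_\odot}$, and since a coincidence of universal products forces equality of supports while every product of \cite[Propositions~5.13 and 6.19]{GHU23} has one of the eight non-exceptional supports (as recorded in the proof of \Cref{thm:classification-two-faced-admissible-sets}), $\odot$ cannot coincide with any of them. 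For the five rigid non-exceptional supports the same indicator family is, by construction, the family of highest coefficients of the corresponding un-deformed product of \cite{GHU23} (Boolean, Boolean--tensor, Boolean--free, and their colour swaps), so determination of $\odot$ by its highest coefficients identifies the two.

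In the three deformable cases I would feed the surviving mixed coefficient into \Cref{lem:relations-for-basic-coefficients}: by (3) it lies on $\mathbb T$, while the monochrome coefficients are pinned down by which nestings and crossings the support contains. Using (4) for $\mathrm A_{\wcol\bcol}$ (there $\xi_\wcol=1$ forces $\nu_{\wcol\bcol}=\xi_{\wcol\bcol}$) and property \ref{it:highest-coeff:symmetry} to relate swapped coefficients by complex conjugation (so $\xi_{\bcol\wcol}=\overline{\xi_{\wcol\bcol}}$, and likewise for $\nu$), one finds that $\alpha$ depends on a single unimodular parameter $\zeta$, read off as the conjugate of whichever mixed basic coefficient survives on $\Pi_\odot$ (namely $\overline{\nu_{\wcol\bcol}}$ for $\mathrm A_{\wcol\bcol}$ and $\mathrm{NC}_{\wcol\bcol}$, and the analogous unimodular coefficient in the binoncrossing case). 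Comparing with the value $\alpha(\crosscircbullet)=\overline\zeta$ of the deformed tensor product, extracted in \Cref{obs:admissible-weights-and-sets} from \Cref{exp:deformed-tensor} (and the analogous values for the deformed free and bifree products), shows that $\odot$ and the matching deformed product share all basic coefficients, hence all highest coefficients, hence coincide. The main obstacle is precisely this last matching: one must have the basic coefficients of the deformed free and bifree products from \cite{GHU23} available explicitly in order to align the parameter $\zeta$, one must invoke the nontrivial fact that the highest coefficients alone determine a universal product, and one has to verify that the swapped coefficients $\nu_{\bcol\wcol},\xi_{\bcol\wcol}$ are forced by \ref{it:highest-coeff:symmetry} to be consistent with a single $\zeta$ rather than two independent unimodular parameters.
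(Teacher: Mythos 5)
Your proof is correct and follows essentially the same route as the paper's: both pass from positivity and symmetry to an admissible family, invoke \Cref{thm:classification-two-faced-admissible-sets}, \Cref{lem:relations-for-basic-coefficients}, and the fact that an admissible family is determined by its basic coefficients, and then identify $\odot$ with a product from \cite{GHU23} via determination by highest coefficients (\Cref{thm:unique-highest-coefficients}). The only differences are organizational --- the paper splits cases by whether all basic coefficients lie in $\{0,1\}$ rather than by whether $\Pi_\odot$ contains a mixed basic partition --- together with your extra care in the deformable cases, where you correctly observe that for $\Pi_\odot=\mathrm{biNC}_{\wcol\bcol}$ the surviving unimodular coefficient is $\xi_{\wcol\bcol}$ (the statement's $\zeta=\overline{\nu_{\wcol\bcol}}$ there is a slip, since $\nu_{\wcol\bcol}=0$ in that case), a detail the paper's proof compresses into ``by comparison of the basic coefficients''.
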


\begin{proof}
  If $\odot$ is a positive symmetric universal product, then its highest coefficients form an admissible family of weights. If all the basic coefficients are 0 or 1, the family must be given by the indicator function of one of the admissible sets of partitions and all except the mentioned four are identified as positive products in \cite{GHU23}:
  \begin{itemize}
  \item $\mathrm{A}_{\wcol\bcol}$ corresponds to the tensor product
  \item $\mathrm{NC}_{\wcol\bcol}$ corresponds to the free product
  \item $\mathrm{biNC}_{\wcol\bcol}$ corresponds to the bifree product
  \item $\mathrm{I}_{\wcol}\mathrm{A}_{\bcol}$ and $\mathrm{A}_{\wcol}\mathrm{I}_{\bcol}$  corresponds to the Boolean-tensor and tensor-Boolean product, respectively
  \item $\mathrm{I}_{\wcol}\mathrm{NC}_{\bcol}$ and $\mathrm{NC}_{\wcol}\mathrm{I}_{\bcol}$  corresponds to the Boolean-free and free-Boolean product, respectively
  \item $\mathrm{I}_{\wcol\bcol}$ corresponds to the Boolean product
  \end{itemize}
  If one of the basic coefficients is not 0 or 1,  \Cref{lem:relations-for-basic-coefficients}  leaves only three possibilities, in each of which the universal product has been found to be positive in \cite{GHU23}:
  \begin{itemize}
  \item $\nu_{\wcol\bcol}=\xi_{\wcol\bcol}=q\in\mathbb T\setminus\{1\}$, in this case all other basic coefficients are forced to be equal to 1; by comparison of the basic coefficients, the corresponding universal product is the deformed tensor product with $\zeta=\overline q$,
  \item $\nu_{\wcol\bcol}=q\in\mathbb T\setminus\{1\}$, $\xi_{\wcol\bcol}=0$; in this case, the product must coincide with the deformed free product with $\zeta=\overline q$,
  \item $\nu_{\wcol\bcol}=0$, $\xi_{\wcol\bcol}=q\in\mathbb T\setminus\{1\}$; in this case, the product must coincide with the deformed bifree product with $\zeta=\overline q$.\qedhere
  \end{itemize}
\end{proof}

\begin{remark}\label{rem:trace-preserving}
  A remarkable property of freeness is that the free product of traces is again a trace. We cannot expect such a behaviour for any non-trivial multi-faced independence. Indeed, this would force the highest coefficients to be invariant under cyclic permutations, and since we may change the color of the first leg, we could change the color of every leg without changing the coefficient.   
\end{remark}

\begin{remark}\label{rem:convolution-R2}
  Bifreeness allows to define a convolution for probability measures on $\mathbb R^2$. This comes from the fact that for bifree pairs $(a_1^{\wcol},a_2^{\bcol}), (b_1^{\wcol},b_2^{\bcol})$ one always has commutativity of $a_1^{\wcol}$ with $b_2^{\bcol}$ and of $a_2^{\bcol}$ with $b_1^{\wcol}$. Consequently, $a_1^{\wcol}+b_1^{\wcol}$ commutes with $a_2^{\bcol}+b_2^{\bcol}$ whenever $a_1^{\wcol}$, $a_2^{\bcol}$ commute and $b_1^{\wcol},b_2^{\bcol}$ commute. If independent variables in different faces commute, one must have $\xi_{\wcol\bcol}=1$, which is only the case for tensor and bifree independence. 
\end{remark}

\begin{remark}
  There are other interesting symmetric two-faced universal products which are not positive, for example the bi-Boolean product. It seems very well possible to do a classification under slightly relaxed conditions, only assuming that one is allowed to change the color of the first leg and not assuming any mirror symmetry (recall that we used changing the color on both sides to show that highest coefficients for all interval partitions are 1). However, it is not clear how to motivate those properties when one does not aim for positivity. For the construction of a universal product in the algebraic sense (see \Cref{sec:reconstruction}), \Cref{it:highest-coeff:change-color,it:highest-coeff:symmetry} of \Cref{thm:properties-of-highest-coeffs} are not necessary at all. 
\end{remark}

\section{Moment-cumulant relations}\label{sec:moment-cumulant-relations}

A key tool in the proofs of the subsequent sections are cumulants. In this section, we adapt the theory of cumulants developed in \cite{MaSc17} to our special case of symmetric multi-faced independences.

\begin{observation}\label{obs:moment-cumulant-relation}
  Let $\alpha$ be a family of weights such that $\alpha_\pi$ is invertible for every one-block partition. For every family of \emph{moments}, $m=(m_{\mathbf{f}})_{{\mathbf{f}}\in \mathcal F^*}\in\mathbb C^{\mathcal F^*}$, there is a unique family of \emph{$\alpha$-cumulants}, $c=(c_{\mathbf{f}})_{{\mathbf{f}}\in \mathcal F^*}\in\mathbb C^{\mathcal F^*}$ such that 

  \begin{align}\label{eq:sf-moment-cumulant-relation}
    m_{\mathbf{f}}=\sum_{\pi\in \mathcal P_<({\mathbf{f}})} \frac{1}{|\pi|!} \alpha_\pi \prod_{\beta\in\pi} c_{|\beta|};
  \end{align}
  indeed, existence and uniqueness of the $c_{\mathbf{f}}$ follows by a standard induction argument. Obviously, the cumulants also determine the moments.

  If $\alpha$ is invariant under permutation of blocks, then the formula simplifies to
  \begin{align}\label{eq:sf-symmetric-moment-cumulant-relation}
    m_{\mathbf{f}}=\sum_{\pi\in \mathcal P({\mathbf{f}})} \alpha_\pi \prod_{\beta\in\pi} c_{|\beta|}.
  \end{align}

  There is no problem extending formulas \eqref{eq:sf-moment-cumulant-relation} and \eqref{eq:sf-symmetric-moment-cumulant-relation} to a multivariate situation. To this end, we think of the (multivariate) moments and cumulants as linear functionals $m,c\colon A\to\mathbb C$, where $A=\mathbb C\langle x_{i}^\bsquare:\bsquare\in\mathcal F, i\in I^\bsquare\rangle$ is a multi-faced polynomial algebra with (possibly) several indeterminates $x_{i}^\bsquare$, $i\in I^{\bsquare}$, for each face $\bsquare \in\mathcal F$. For a monomial $X=x_{\mathbf i(1)}^{{\mathbf{f}}(1)}\cdots x_{\mathbf i(n)}^{{\mathbf{f}}(n)}$ and a subset $\beta=\{\ell_1<\ldots < \ell_r\}\subset[n]$, let $X\restriction \beta$ denote the monomial $x_{\mathbf i(\ell_1)}^{{\mathbf{f}}(\ell_1)}\cdots x_{\mathbf i(\ell_n)}^{{\mathbf{f}}(\ell_n)}$. Cumulants are then defined by the relations
  \begin{align}
    m(X)&=\sum_{\pi\in\mathcal P_<({\mathbf{f}})}\frac{1}{|\pi|!}\alpha_\pi\prod_{\beta\in \pi} c(X\restriction \beta),\label{eq:non-symmetric_moment-cumulant_relation}\\
    m(X)&=\sum_{\pi\in\mathcal P({\mathbf{f}})}\alpha_\pi\prod_{\beta\in \pi} c(X\restriction \beta),\label{eq:symmetric_moment-cumulant_relation}
  \end{align}
  respectively. In case each $I^{\bsquare}$ is a one-element set, writing $x^{\bsquare}$ for the indeterminates, formulas \eqref{eq:non-symmetric_moment-cumulant_relation} and \eqref{eq:symmetric_moment-cumulant_relation} are recovered by setting $m_{\mathbf f}:=m(x^{\mathbf f(1)}\cdots x^{\mathbf f(n)})$ and $c_{\mathbf f}:=c(x^{\mathbf f(1)}\cdots x^{\mathbf f(n)})$ for $\mathbf f\in\mathcal F^n$.
\end{observation}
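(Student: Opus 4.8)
The plan is to establish existence and uniqueness of the $\alpha$-cumulants by a triangular induction on the length $n=|{\mathbf{f}}|$ of the word, exploiting that the one-block partition $1_{\mathbf{f}}$ is the unique partition of $[n]$ whose associated product of cumulants involves a cumulant of full length $n$. First I would isolate, on the right-hand side of \eqref{eq:sf-moment-cumulant-relation}, the summand indexed by $1_{\mathbf{f}}$: since $|1_{\mathbf{f}}|=1$, this summand equals $\alpha_{1_{\mathbf{f}}}\,c_{\mathbf{f}}$. Any other $\pi\in\mathcal P_<({\mathbf{f}})$ has at least two blocks, so each of its blocks $\beta$ satisfies $|\beta|<n$ (here $|\beta|$ is the face word of the block, whose length equals the cardinality of $\beta$). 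Consequently every cumulant factor $c_{|\beta|}$ occurring outside the $1_{\mathbf{f}}$-summand is indexed by a strictly shorter word. Rearranging \eqref{eq:sf-moment-cumulant-relation} then yields
\[
c_{\mathbf{f}} = \alpha_{1_{\mathbf{f}}}^{-1}\Bigl(m_{\mathbf{f}} - \sum_{\substack{\pi\in\mathcal P_<({\mathbf{f}})\\ \pi\neq 1_{\mathbf{f}}}} \frac{1}{|\pi|!}\,\alpha_\pi\prod_{\beta\in\pi} c_{|\beta|}\Bigr),
\]
which is meaningful precisely because $\alpha_{1_{\mathbf{f}}}$ is invertible by hypothesis.

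The induction then proceeds cleanly. In the base case $n=1$, the set $\mathcal P_<({\mathbf{f}})$ consists only of $1_{\mathbf{f}}$, so the relation reads $m_{\mathbf{f}}=\alpha_{1_{\mathbf{f}}}\,c_{\mathbf{f}}$ and forces $c_{\mathbf{f}}=\alpha_{1_{\mathbf{f}}}^{-1}m_{\mathbf{f}}$. For the inductive step I assume $c_{\mathbf{g}}$ is already uniquely determined by the moments for every word $\mathbf{g}$ with $|\mathbf{g}|<n$; then every cumulant appearing in the sum above is fixed, so the displayed formula both defines $c_{\mathbf{f}}$ and exhibits it as the unique value compatible with \eqref{eq:sf-moment-cumulant-relation}. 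This simultaneously gives existence and uniqueness, and the converse statement (that cumulants determine moments) is immediate, as the moments are written out explicitly by \eqref{eq:sf-moment-cumulant-relation}.

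For the symmetric simplification \eqref{eq:sf-symmetric-moment-cumulant-relation}, I would note that when $\alpha$ is invariant under permutation of blocks, the $|\pi|!$ distinct orderings of a fixed underlying set partition all carry the same weight $\alpha_\pi$ and the same (order-independent) product $\prod_{\beta}c_{|\beta|}$; grouping them cancels the factor $\tfrac{1}{|\pi|!}$ and converts the sum over $\mathcal P_<({\mathbf{f}})$ into the sum over $\mathcal P({\mathbf{f}})$. The multivariate relations \eqref{eq:non-symmetric_moment-cumulant_relation} and \eqref{eq:symmetric_moment-cumulant_relation} are obtained by literally the same induction, now run on the number of letters $n$ of the monomial $X$, using that for $\pi\neq 1_{\mathbf{f}}$ each restricted monomial $X\restriction\beta$ is strictly shorter.

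There is no genuine obstacle here: this is the standard triangularity / Möbius-inversion induction, and the only points demanding care are the two bookkeeping observations that the one-block partition is the sole source of a full-length cumulant factor and that invertibility of $\alpha_{1_{\mathbf{f}}}$ is exactly what permits the division. The mildest subtlety is the convention at length $0$ (the empty word), which I would either exclude from the recursion or handle by the usual convention that the empty product equals $1$, so that it plays no role in the inductive mechanism.
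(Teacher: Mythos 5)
Your proposal is correct and is precisely the ``standard induction argument'' that the paper invokes without spelling out: isolating the one-block summand (whose weight is invertible by hypothesis), recursing on word length, cancelling the $\frac{1}{|\pi|!}$ factor against the $|\pi|!$ orderings in the block-permutation-invariant case, and running the same induction on monomial length for the multivariate version. Nothing is missing, and your handling of the base case and the invertibility hypothesis matches the paper's intent exactly.
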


\begin{definition}
  An \emph{algebraic probability space} is a pair $(\mathcal A,\Phi)$, where $\mathcal A$ is an algebra and $\Phi\colon \mathcal A\to\mathbb C$ is a linear functional.
\end{definition}

\begin{definition}\label{def:moments-cumulants}
  Let $(\mathcal A,\Phi)$ be an algebraic probability space and $\alpha=(\alpha_\pi)_{\pi\in\mathcal P_{(<)}}$ a family of weights on (ordered) multi-faced partitions. For a family $\mathbf a=(a_i^\bsquare:\bsquare\in\mathcal F,i\in I^\bsquare)\subset \mathcal A$, put $j_{\mathbf a}\colon \mathbb C\langle x_{i}^\bsquare:\bsquare\in\mathcal F, i\in I^\bsquare\rangle\to \mathcal A, x_i^\bsquare\mapsto a_i^\bsquare$. We define its \emph{moments} by
  \[m_{\mathbf a}(X):=\Phi(j_{\mathbf a}(X))\]
  and its \emph{$\alpha$-cumulants} $c_{\mathbf a}(X)$ according to the moment-cumulant relations \eqref{eq:non-symmetric_moment-cumulant_relation} or \eqref{eq:symmetric_moment-cumulant_relation}, respectively.
\end{definition}

\begin{definition}\label{def:exp-log}
  Fix monic weights $(\alpha_\pi)_{\pi\in\mathcal P}$.
  Let $V=\bigoplus_{\bsquare\in\mathcal F}V^{\bsquare}$ be a vector space with a direct sum decomposition into subspaces according to the faces. Recall that $T_0(V)=\bigoplus _{n\in\mathbb N} V^{\otimes n}=\bigsqcup_{\bsquare\in\mathcal F} T_0(V^{\bsquare})$ denotes the (non-unital) free algebra over $V$ and $T(V):= \bigoplus _{n\in\mathbb N_0}V^{\otimes n}=\mathbb C 1\oplus T_0(V)$ its unitization, the free unital algebra over $V$. On the dual space $T_0(V)'=\{\varphi\colon T_0(V)\to\mathbb C \text{ linear}\}$ we define for $x_i\in V^{{\mathbf{f}}(i)}$
  \[\exp_\alpha(\psi)(x_1\cdots x_n):=\sum_{\pi\in \mathcal P({\mathbf{f}}) } \alpha_\pi\psi^{\otimes |\pi|}(x_\pi)\]
  where for $\pi=\{\beta_1,\ldots, \beta_n\}$ we put $x_{\beta_k}:=\mathop{\overrightarrow{\prod}}_{i\in \beta_k} x_i$ and $x_\pi:=x_{\beta_1}\otimes\cdots\otimes x_{\beta_n}$. Then $\exp_\alpha$ is a bijection. We denote the inverse simply as $\log_\alpha$, which can be calculated recursively,
  \[\log_{\alpha}(\varphi)(x_1\cdots x_n)=\varphi(x_1\cdots x_n)
    -\sum_{\substack{\pi\in \mathcal P({\mathbf{f}})\\|\pi|>1}} \alpha_\pi\log_{\alpha}(\varphi)^{\otimes |\pi|}(x_\pi).\]
  Note that often $\exp_\alpha$ and $\log_\alpha$ are interpreted as bijections between linear functionals on $T(V)$ vanishing on 1 and unital linear functionals on $T(V)$ by extending the linear functionals from $T_0(V)$ to $T(V)$ accordingly (i.e.\ $\psi$ and $\log_\alpha(\varphi)$ are extended by annihilating the unit, while $\exp_\alpha(\psi)$ and $\varphi$ are extended as unital maps).

  We use the following conventions.
  \begin{itemize}
  \item If the weights $\alpha$ come from a universal product $\odot$, we write $\exp_\odot:=\exp_\alpha$ and $\log_\odot:=\log_\alpha$. 
  \item If $(x_i^{\bsquare})_{i\in I^{\bsquare}}$ form a basis of $V^{\bsquare}$, we identify $T(V)$ and $T_0(V)$ with the noncommutative (unital or non-unital) polynomial algebras $\mathbb C\langle{x_i^{\bsquare}:i\in I^{\bsquare},\bsquare\in\mathcal F}\rangle$ and $\mathbb C\langle{x_i^{\bsquare}:i\in I^{\bsquare},\bsquare\in\mathcal F}\rangle_0$, respectively.
  \end{itemize}
\end{definition}

\begin{definition}\label{def:hat}
  Let $A$ be a multi-faced algebra and $\varphi\colon A\to\mathbb C$ a linear functional. We define $\hat A:=T_0\Bigl(\bigoplus_{\bsquare\in\mathcal F }A^\bsquare\Bigr)$ and $\hat\varphi:=\varphi\circ\mu$, where $\mu\colon T_0\Bigl(\bigoplus_{\bsquare\in\mathcal F }A^\bsquare\Bigr)\to A$ is the canonical homomorphism.
\end{definition}

\begin{observation}\label{obs:log-cumulants}
  Let $\alpha$ be monic weights. Let furthermore $\varphi\colon A\to\mathbb C$ be a linear functional on a multi-faced algebra $A$ and $\mathbf a=(a_i^{\bsquare}\in A^{\bsquare}:i\in I^{\bsquare},\bsquare\in\mathcal F)$ a family of elements. With the notations from the previous definitions, for $X=x_{i_1}^{\bsquare_1}\otimes\cdots\otimes x_{i_n}^{\bsquare_n}$, it holds that 
  \[\hat\varphi(a_{i_1}^{\bsquare_1}\otimes\cdots\otimes a_{i_n}^{\bsquare_n})=m_{\mathbf a}(x_{i_1}^{\bsquare_1}\otimes\cdots\otimes x_{i_n}^{\bsquare_n}) \quad\text{and}\quad\log_\alpha\hat\varphi (a_{i_1}^{\bsquare_1}\otimes\cdots\otimes a_{i_n}^{\bsquare_n})=c_{\mathbf a}(x_{i_1}^{\bsquare_1}\otimes\cdots\otimes x_{i_n}^{\bsquare_n}).\]
\end{observation}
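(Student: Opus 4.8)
The plan is to reduce both identities to the single algebra homomorphism $\Theta\colon \mathbb C\langle x_i^{\bsquare}\rangle_0 = T_0(W)\to \hat A = T_0\bigl(\bigoplus_{\bsquare\in\mathcal F} A^{\bsquare}\bigr)$ induced by the face-preserving linear map $\theta\colon x_i^{\bsquare}\mapsto a_i^{\bsquare}$ (so $W^{\bsquare}$ has basis $(x_i^{\bsquare})_i$ and $\theta(W^{\bsquare})\subseteq A^{\bsquare}$), i.e.\ $\Theta=T_0(\theta)$. On monomials $\Theta(x_{i_1}^{\bsquare_1}\otimes\cdots\otimes x_{i_n}^{\bsquare_n})=a_{i_1}^{\bsquare_1}\otimes\cdots\otimes a_{i_n}^{\bsquare_n}$. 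First I would observe that $\hat\varphi\circ\Theta = m_{\mathbf a}$: the maps $j_{\mathbf a}$ and $\mu\circ\Theta$ are algebra homomorphisms $T_0(W)\to A$ agreeing on generators, hence $j_{\mathbf a}=\mu\circ\Theta$, so that $m_{\mathbf a}=\varphi\circ j_{\mathbf a}=(\varphi\circ\mu)\circ\Theta=\hat\varphi\circ\Theta$ by \Cref{def:hat}. Evaluating at $X=x_{i_1}^{\bsquare_1}\otimes\cdots\otimes x_{i_n}^{\bsquare_n}$ yields the first equality.

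For the second equality I would first identify $c_{\mathbf a}=\log_\alpha(m_{\mathbf a})$ by matching the defining sum of $\exp_\alpha$ in \Cref{def:exp-log} term by term with the symmetric moment-cumulant relation \eqref{eq:symmetric_moment-cumulant_relation}: both have the shape $\sum_{\pi\in\mathcal P(\mathbf f)}\alpha_\pi\prod_{\beta\in\pi}(\,\cdot\,)(X\restriction\beta)$, so the prescription ``moments $=\exp_\alpha(\text{cumulants})$'' is precisely the relation defining $c_{\mathbf a}$ from $m_{\mathbf a}$ (the monicity of $\alpha$ makes the one-block term the identity, and $\exp_\alpha$ is a bijection). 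Hence $c_{\mathbf a}=\log_\alpha(m_{\mathbf a})=\log_\alpha(\hat\varphi\circ\Theta)$.

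The crux is then the naturality of $\exp_\alpha$, and hence of $\log_\alpha$, under $\Theta$: for any functional $\phi$ on $\hat A$ one has $\exp_\alpha(\phi\circ\Theta)=\exp_\alpha(\phi)\circ\Theta$. This holds because $\Theta$ is multiplicative, so it commutes with forming the block products $x_\beta=\mathop{\overrightarrow{\prod}}_{i\in\beta}x_i$ that appear in the formula for $\exp_\alpha$, and because $\Theta$ preserves faces, so the index set $\mathcal P(\mathbf f)$ of the defining sum is unchanged under $\Theta$. Since $\exp_\alpha$ is a bijection, applying it to $\log_\alpha(\psi)\circ\Theta$ and using the naturality of $\exp_\alpha$ returns $\psi\circ\Theta$, whence $\log_\alpha(\psi\circ\Theta)=\log_\alpha(\psi)\circ\Theta$. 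Taking $\psi=\hat\varphi$ and combining the three steps gives
\[\log_\alpha(\hat\varphi)\circ\Theta=\log_\alpha(\hat\varphi\circ\Theta)=\log_\alpha(m_{\mathbf a})=c_{\mathbf a},\]
and evaluating at $X$ yields the second equality.

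I expect the only genuinely delicate point to be bookkeeping rather than a real obstacle: one must keep straight which copy of $\exp_\alpha/\log_\alpha$ (over $W$ or over $\bigoplus_{\bsquare\in\mathcal F} A^{\bsquare}$) is meant at each occurrence and verify that the block products and face structures are truly respected by $\Theta$. As a fully elementary alternative avoiding the naturality statement, the second equality can be proved directly by induction on $n$: the recursive formula for $\log_\alpha$ in \Cref{def:exp-log} and the inverted form of \eqref{eq:symmetric_moment-cumulant_relation} have identical shape, and the first equality together with the induction hypothesis applied to the proper sub-monomials $X\restriction\beta$ (which satisfy $|\beta|<n$ whenever $|\pi|>1$) matches the two expansions term by term.
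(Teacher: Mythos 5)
Your proposal is correct. The paper gives no proof of this statement at all — it is an \emph{Observation}, meant to follow by unwinding \Cref{def:moments-cumulants,def:exp-log,def:hat} — and your argument supplies exactly that unwinding: the identification $m_{\mathbf a}=\hat\varphi\circ\Theta$ via the universal property of $T_0(W)$, the reading of \eqref{eq:symmetric_moment-cumulant_relation} as $m_{\mathbf a}=\exp_\alpha(c_{\mathbf a})$ (so $c_{\mathbf a}=\log_\alpha(m_{\mathbf a})$ by bijectivity), and the naturality $\log_\alpha(\psi\circ\Theta)=\log_\alpha(\psi)\circ\Theta$, which is the same computation as the paper's own \Cref{obs:exp-log-universal}, there carried out for $\hat h$ induced by a multi-faced algebra homomorphism and here for $\Theta=T_0(\theta)$ induced by the face-preserving assignment $x_i^{\bsquare}\mapsto a_i^{\bsquare}$.
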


\begin{observation}\label{obs:exp-log-universal}
  Let $h\colon B\to A$ be an $\mathcal F$-faced homomorphism between $\mathcal F$-faced algebras $B,A$ and define $\hat h \colon \hat B\to \hat A$ as the unique algebra homomorphism with $\hat h(b) =h(b)$ for all $b\in B^{\bsquare}$, $\bsquare\in\mathcal F$. Automatically, $\hat h$ is an $\mathcal F$-faced homomorphism and fulfills $\mu_A\circ\hat h=h\circ \mu_B$. For $\varphi\colon A\to\mathbb C$ a linear functional, it follows that
  \[\widehat{\varphi\circ h}= \varphi\circ h \circ \mu_B= \varphi\circ\mu_A\circ \hat h=\hat\varphi \circ\hat h. \]
  Therefore, given monic weights $(\alpha_\pi)_{\pi\in\mathcal P}$, one finds that
  \[\exp_\alpha(\widehat {\varphi\circ h})=(\exp_\alpha \hat\varphi)\circ\hat h,\quad \log_\alpha(\widehat {\varphi\circ h})=(\log_\alpha \hat\varphi)\circ\hat h.\]
\end{observation}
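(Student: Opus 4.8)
The plan is to verify the two structural assertions about $\hat h$ first, and then to isolate the single nontrivial ingredient for the exponential/logarithm identities: a naturality property of $\exp_\alpha$ under precomposition with $\hat h$.

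First I would check that $\hat h$ is $\mathcal F$-faced and that $\mu_A\circ\hat h=h\circ\mu_B$. Since $\hat B=T_0\bigl(\bigoplus_{\bsquare}B^\bsquare\bigr)$ is freely generated by the generators $b\in B^\bsquare$, both claims reduce to a computation on generators. By definition $\hat h(b)=h(b)$, and $h$ being $\mathcal F$-faced gives $h(b)\in A^\bsquare$, so $\hat h$ maps the generators of the face $T_0(B^\bsquare)$ into $T_0(A^\bsquare)$, whence $\hat h$ is $\mathcal F$-faced; in particular, a monomial $b_1\cdots b_n$ with $b_i\in B^{{\mathbf{f}}(i)}$ is sent to $h(b_1)\cdots h(b_n)$, a monomial with the \emph{same} face word ${\mathbf{f}}$. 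For $\mu_A\circ\hat h=h\circ\mu_B$, both sides are algebra homomorphisms $\hat B\to A$, so it suffices to compare them on a generator $b\in B^\bsquare$, where $\mu_A(\hat h(b))=\mu_A(h(b))=h(b)=h(\mu_B(b))$. The displayed chain $\widehat{\varphi\circ h}=(\varphi\circ h)\circ\mu_B=\varphi\circ(\mu_A\circ\hat h)=\hat\varphi\circ\hat h$ is then immediate from the defining relation $\hat\psi=\psi\circ\mu$ together with this compatibility.

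The crux is the naturality statement: for \emph{every} linear functional $\psi$ on $\hat A$ one has $\exp_\alpha(\psi\circ\hat h)=(\exp_\alpha\psi)\circ\hat h$. I would prove this by evaluating both sides on a monomial $X=b_1\cdots b_n\in\hat B$ with $b_i\in B^{{\mathbf{f}}(i)}$, which suffices by linearity. Writing $y_i:=h(b_i)\in A^{{\mathbf{f}}(i)}$, the homomorphism property of $\hat h$ gives $\hat h(X)=y_1\cdots y_n$ and, for any block $\beta$, $\hat h\bigl(\mathop{\overrightarrow{\prod}}_{i\in\beta}b_i\bigr)=\mathop{\overrightarrow{\prod}}_{i\in\beta}y_i$. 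Because $X$ and $\hat h(X)$ carry the same face word ${\mathbf{f}}$, the defining sum of $\exp_\alpha$ runs over the \emph{same} index set $\mathcal P({\mathbf{f}})$ on both sides; comparing term by term and using $(\psi\circ\hat h)\bigl(\mathop{\overrightarrow{\prod}}_{i\in\beta}b_i\bigr)=\psi\bigl(\mathop{\overrightarrow{\prod}}_{i\in\beta}y_i\bigr)$ yields the equality. This is where the only genuine content lies; the point to keep straight is that $\hat h$ preserving face words is exactly what makes the two partition sums indexed identically, and beyond that the argument is pure bookkeeping.

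Finally I would specialise. Taking $\psi=\hat\varphi$ in the naturality identity and inserting $\hat\varphi\circ\hat h=\widehat{\varphi\circ h}$ from the chain above gives the first displayed equation $\exp_\alpha(\widehat{\varphi\circ h})=(\exp_\alpha\hat\varphi)\circ\hat h$. For the logarithm I would apply naturality to $\psi:=\log_\alpha\hat\varphi$, obtaining $\exp_\alpha\bigl((\log_\alpha\hat\varphi)\circ\hat h\bigr)=(\exp_\alpha\log_\alpha\hat\varphi)\circ\hat h=\hat\varphi\circ\hat h=\widehat{\varphi\circ h}$; since $\exp_\alpha$ is a bijection on functionals on $\hat B$ (the weights being monic, cf.\ \Cref{def:exp-log}), applying its inverse $\log_\alpha$ gives $(\log_\alpha\hat\varphi)\circ\hat h=\log_\alpha(\widehat{\varphi\circ h})$, as claimed.
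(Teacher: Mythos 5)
Your proof is correct and takes essentially the same route as the paper, which presents this as an observation with the chain $\widehat{\varphi\circ h}=\varphi\circ h\circ\mu_B=\varphi\circ\mu_A\circ\hat h=\hat\varphi\circ\hat h$ stated inline and the passage to $\exp_\alpha$ and $\log_\alpha$ treated as immediate from \Cref{def:exp-log}. Your explicit verification of the naturality $\exp_\alpha(\psi\circ\hat h)=(\exp_\alpha\psi)\circ\hat h$ (matching the partition sums because $\hat h$ preserves face words) and the inversion argument for $\log_\alpha$ merely spell out what the paper leaves implicit.
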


\begin{theorem}[{Adjusted and simplified from  \cite[Th.~7.2]{MaSc17}}]\label{thm:unique-highest-coefficients}\ \\
  A positive and symmetric universal product is uniquely determined by its highest coefficients. More precisely, for $a=a_1\cdots a_n$ with $a_\ell\in A_{\mathbf b(\ell)}^{\mathbf f(\ell)}$ so that $a_1\otimes\cdots \otimes a_n\in T_0\Bigl(\bigoplus_{\kappa\in[2],\bsquare\in\mathcal F }A_\kappa^\bsquare\Bigr)=\hat A_1\sqcup\hat A_2$,  
  \[\varphi_1\odot\varphi_2(a_1\cdots a_n)=\exp_\odot \bigl(\log_{\odot}(\hat\varphi_1)\oplus \log_{\odot}(\hat\varphi_2)\bigr)(a_1\otimes\cdots \otimes a_n);\]
  here we use the  direct sum as a shorthand notation for the corresponding linear functional on $T_0\Bigl(\bigoplus_{\kappa\in[2],\bsquare\in\mathcal F }A_i^\bsquare\Bigr)=\hat A_1\sqcup\hat A_2$ as described by \Cref{eq:direct-sum-extended}. 
\end{theorem}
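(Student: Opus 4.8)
The right-hand side of the asserted formula is built only from $\exp_\odot$ and $\log_\odot$, which by \Cref{def:exp-log} depend solely on the highest coefficients $\alpha_\pi$. Hence once the formula is established, uniqueness is immediate, and the whole task is to verify the identity. My plan is to reduce it to a statement purely about cumulants. Writing $\Psi:=\log_\odot(\hat\varphi_1)\oplus\log_\odot(\hat\varphi_2)$ and using that $\exp_\odot,\log_\odot$ are mutually inverse together with the identity $\widehat{\varphi_1\odot\varphi_2}(a_1\otimes\cdots\otimes a_n)=(\varphi_1\odot\varphi_2)(a_1\cdots a_n)$ from \Cref{def:hat}, the claimed formula is equivalent to the \emph{additivity of cumulants}
\[\log_\odot\bigl(\widehat{\varphi_1\odot\varphi_2}\bigr)=\log_\odot(\hat\varphi_1)\oplus\log_\odot(\hat\varphi_2),\]
that is, to the assertion that the joint cumulant functional agrees with the individual cumulants on tensors drawn from a single factor and \emph{vanishes on mixed tensors}, in accordance with \Cref{eq:direct-sum-extended}.

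The ``pure'' half is clean and needs no positivity. The inclusions $\iota_\kappa\colon A_\kappa\hookrightarrow A_1\sqcup A_2$ are multi-faced homomorphisms, and the restriction property of \Cref{def:universal-product} gives $(\varphi_1\odot\varphi_2)\circ\iota_\kappa=\varphi_\kappa$. Applying the universality of $\log_\odot$ (\Cref{obs:exp-log-universal}) to $\iota_\kappa$ yields $\log_\odot(\hat\varphi_\kappa)=\bigl(\log_\odot\widehat{\varphi_1\odot\varphi_2}\bigr)\circ\hat\iota_\kappa$, which says exactly that $\log_\odot\widehat{\varphi_1\odot\varphi_2}$ restricts to $\log_\odot\hat\varphi_\kappa$ on every tensor whose entries all lie in $A_\kappa$. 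This matches the pure part of $\Psi$.

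It remains to show that $\log_\odot\widehat{\varphi_1\odot\varphi_2}$ vanishes on a mixed tensor $X=a_1\otimes\cdots\otimes a_n$ with $\mathbf s=\mathbf b\times\mathbf f$. The \emph{leading} order is forced by the linearized product. Scaling to $(t_1\varphi_1)\odot(t_2\varphi_2)$ and reading off the bidegree-$(1,1)$ coefficient of the corresponding cumulant from the moment--cumulant relation \Cref{eq:symmetric_moment-cumulant_relation}, one checks that among all $\pi\in\mathcal P(\mathbf f)$ only the maximal partition $\sigma=\{\beta_1(\mathbf s),\beta_2(\mathbf s)\}$ and the one-block partition $1_{\mathbf f}$ contribute at this order: a partition with two blocks inside one factor carries too high a power of some $t_\kappa$, and a partition with a mixed block together with any further block carries total bidegree exceeding $(1,1)$. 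Abbreviating $P_\kappa:=\varphi_\kappa\bigl(\mathop{\overrightarrow{\prod}}_{\mathbf b(\ell)=\kappa}a_\ell\bigr)$ and using that the leading cumulant of a single factor equals its moment, this coefficient equals $\gamma(X)+\alpha_\sigma P_1P_2$, where $\gamma(X)$ is the sought bidegree-$(1,1)$ mixed cumulant. On the other hand this same coefficient is $\varphi_1\boxdot\varphi_2(a_1\cdots a_n)=\alpha_{\mathbf s}P_1P_2$ by \Cref{eq:highest-coefficients}, and $\alpha_{\mathbf s}=\alpha_\sigma$ since $\mathbf s_\sigma=\mathbf s$; hence $\gamma(X)=0$.

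The main obstacle is to upgrade this leading-order vanishing to vanishing at $t_1=t_2=1$, i.e.\ to all orders. This cannot be extracted from the moment--cumulant relation alone, since re-expanding $\widehat{\varphi_1\odot\varphi_2}$ through \Cref{eq:symmetric_moment-cumulant_relation} merely restates the identity to be proved; it is precisely the content of the additivity-of-cumulants theorem \cite[Th.~7.2]{MaSc17}. I would obtain it by reducing, via the universality of $\exp_\odot,\log_\odot$ (\Cref{obs:exp-log-universal}), to the cofree case $A_\kappa=T(V_\kappa)$ with the $a_\ell$ chosen among free generators --- every instance being a pullback of such a one along a multi-faced homomorphism --- and then invoking the dual-semigroup/convolution description of $\odot$ from \cite{MaSc17}, in which $\log_\odot$ linearizes the universal product into the direct sum so that all mixed cumulants vanish identically. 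Positivity and symmetry enter only through \Cref{thm:MS:highest-coefficients}, which ensures that wrongly ordered coefficients are absent and that the block order is irrelevant, so that the simplified maps $\exp_\odot,\log_\odot$ coincide with the cumulant transforms of \cite{MaSc17}. Combining the pure and mixed parts gives the displayed cumulant identity, hence the formula, and uniqueness follows as noted.
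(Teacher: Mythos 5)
Your proposal is correct and ultimately takes the same route as the paper: the paper's own proof is nothing but a reduction to \cite[Th.~7.2]{MaSc17}, observing that positivity rules out wrong-ordered coefficients and that symmetry makes the Manzel--Sch{\"u}rmann exponential/logarithm coincide with $\exp_\odot,\log_\odot$ and trivializes the cumulant Lie algebra --- which is exactly the point where you, too, defer the all-orders vanishing of mixed cumulants to that citation. Your supplementary direct verifications (the pure part via the restriction property and \Cref{obs:exp-log-universal}, and the bidegree-$(1,1)$ vanishing via \Cref{eq:highest-coefficients}) are sound, but they do not replace the cited theorem, so the logical core of both proofs is identical.
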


\begin{proof}
  We only explain why this is a special case of \cite[Th.~7.2]{MaSc17} and refer the reader to \cite[Theorems 2.4.12 and 2.5.13]{Var21} for a detailed discussion.  Since $\odot$ is positive, their are no wrong-ordered highest coefficients. In the symmetric case, the exponential and logarithm map used in \cite{MaSc17} coincide with the maps of \Cref{def:exp-log} and are therefore determined by the highest coefficients. Since $\odot$ is symmetric, the second ingredient which is in general needed to determine the universal product, namely the \emph{$n$th order cumulant Lie algebra}, is trivial for all $n$. 
\end{proof}

\section{Reconstruction of universal products from highest coefficients }\label{sec:reconstruction}

In this section we prove that every admissible family leads to a unique universal product. In particular, we can associate universal products with the admissible sets  $\mathrm{NC_\wcol A_\bcol},\mathrm{A_\wcol NC_\bcol},\mathrm{pNC_{\wcol\bcol}},\mathrm{pC_{\wcol\bcol}}$. However, it remains an open problem at the moment to decide whether or not those universal products are positive.

\begin{lemma}\label{lem:cumulants-of-product}
  Suppose that the weights $(\alpha_\pi)_{\pi \in \mathcal{P}} $ are admissible.  Fix a family of elements $\mathbf a=(a_\ell^\bsquare:\bsquare\in\mathcal F,\ell\in I^\bsquare)\subset \mathcal A$ in an algebraic probability space $(\mathcal A,\Phi)$ such that $[n]$ is the disjoint union of the $I^{\bsquare}$. Put ${\mathbf{f}}(\ell):=\bsquare$ if $\ell\in I^{\bsquare}$ and assume that ${\mathbf{f}}(i)={\mathbf{f}}(i+1)=\wsquare\in\mathcal F$ for a certain index $i\in[n]$. 
  We define a modified family $\tilde{\mathbf a}=(a_\ell^\bsquare:\bsquare\in\mathcal F,\ell\in \tilde{I}^\bsquare)$ where $\tilde{I}^{\wsquare}:=I^{\wsquare}\setminus\{i,i+1\}\cup\{\{i,i+1\}\}$ and $a_{\{i,i+1\}}^{\wsquare}:=a_i^{\wsquare}a_{i+1}^{\wsquare}$.
  For $X:=x_1^{{\mathbf{f}}(1)}\cdots x_n^{{\mathbf{f}}(n)}$, 
  $\tilde X:=x_1^{{\mathbf{f}}(i)}\cdots x_{i-1}^{{\mathbf{f}}(i-1)}x_{\{i,i+1\}}^{\wsquare} x_{i+2}^{{\mathbf{f}}(i+2)}\cdots x_n^{{\mathbf{f}}(n)}$
  the moments and cumulants according to \Cref{def:moments-cumulants}  fulfill
  $m_{\tilde{\mathbf a}}(\tilde X)=m_{\mathbf a}(X)$ and
  \[c_{\tilde{\mathbf a}}(\tilde X)=c_{\mathbf a}(X)+\sum_{\substack{\sigma=\{\beta_1,\beta_2\}\in\mathcal P({\mathbf{f}})\\i\in \beta_1\neq \beta_2\ni{i+1}}} \alpha_{\sigma}c_{\mathbf a}(X\restriction \beta_1)c_{\mathbf a}(X\restriction \beta_2).\]
\end{lemma}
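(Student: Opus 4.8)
The plan is to prove the two assertions separately. The moment identity is immediate: $j_{\tilde{\mathbf a}}(\tilde X)=a_1\cdots a_{i-1}(a_i^{\wsquare}a_{i+1}^{\wsquare})a_{i+2}\cdots a_n=j_{\mathbf a}(X)$, so $m_{\tilde{\mathbf a}}(\tilde X)=\Phi(j_{\tilde{\mathbf a}}(\tilde X))=\Phi(j_{\mathbf a}(X))=m_{\mathbf a}(X)$. For the cumulant identity I would induct on the length $n$ of $X$, using the symmetric moment-cumulant relation \eqref{eq:symmetric_moment-cumulant_relation} in the recursive form $c(Y)=m(Y)-\sum_{\pi,\,|\pi|>1}\alpha_\pi\prod_{\beta\in\pi}c(Y\restriction\beta)$. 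In the base case $n=2$ the word $\tilde X$ is a single leg, so $c_{\tilde{\mathbf a}}(\tilde X)=m_{\mathbf a}(X)$, while the recursion gives $c_{\mathbf a}(X)=m_{\mathbf a}(X)-\alpha_\sigma\,c_{\mathbf a}(X\restriction\{i\})c_{\mathbf a}(X\restriction\{i+1\})$ for the unique separating two-block partition $\sigma$; adding back the single correction term recovers $m_{\mathbf a}(X)$, as claimed.

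For the inductive step I would insert $m_{\tilde{\mathbf a}}(\tilde X)=m_{\mathbf a}(X)=\sum_{\pi\in\mathcal P(\mathbf f)}\alpha_\pi\prod_{\beta}c_{\mathbf a}(X\restriction\beta)$ into the recursion for $c_{\tilde{\mathbf a}}(\tilde X)$ and split the partitions $\pi\in\mathcal P(\mathbf f)$ according to whether $i$ and $i+1$ lie in the same block. Those with $i,i+1$ in one block are in canonical bijection with the partitions $\tilde\pi\in\mathcal P(\tilde{\mathbf f})$, and since $i,i+1$ are neighbouring legs of the same face, \Cref{it:highest-coeff:red} gives $\alpha_\pi=\alpha_{\operatorname{red}(\pi)}=\alpha_{\tilde\pi}$. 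On a block of $\tilde\pi$ disjoint from the merged leg the cumulants of $\mathbf a$ and $\tilde{\mathbf a}$ coincide, being determined by identical sub-families; on the block $\tilde\beta_0$ meeting the merged leg — a strictly shorter word when $|\tilde\pi|>1$, inside which $i,i+1$ remain adjacent after restriction — I would apply the inductive hypothesis. Its ``main term'' reproduces $\alpha_\pi\prod_\beta c_{\mathbf a}(X\restriction\beta)$ for the same-block partition $\pi$, so after substitution these contributions cancel against the same-block, $|\pi|>1$ part of $m_{\mathbf a}(X)$; what remains is $c_{\mathbf a}(X)$ (the $\pi=1_{\mathbf f}$ summand), plus the sum over $\pi$ with $i,i+1$ in different blocks, minus the ``splitting corrections'' supplied by the inductive hypothesis.

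It remains to match this remainder with the asserted correction term, and the decisive input is admissibility condition (iv') of \Cref{def:admissible}: for $\pi$ with $i\in\beta_1\neq\beta_2\ni i+1$ the weight factors as $\alpha_\pi=\alpha_{\pi_{\beta_1\smile\beta_2}}\cdot\alpha_{\{\beta_1,\beta_2\}}$. I would use the bijection sending such a $\pi$ to the pair $\bigl(\widetilde{\pi_{\beta_1\smile\beta_2}},\,\{\beta_1,\beta_2\}\bigr)$, where the first entry lies in $\mathcal P(\tilde{\mathbf f})$ and the second is a two-block partition of $\beta_1\cup\beta_2$ separating $i,i+1$; this is precisely the index set of the splitting corrections (a partition $\tilde\pi$ of $\tilde{\mathbf f}$ together with a separating split of its merged block). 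Under this identification the weight factors as $\alpha_{\tilde\pi}\cdot\alpha_{\{\beta_1,\beta_2\}}$ and the cumulant products agree termwise, so the contributions with $|\tilde\pi|>1$ cancel and only the $\tilde\pi=1_{\tilde{\mathbf f}}$ terms survive. For $\tilde\pi=1_{\tilde{\mathbf f}}$ the merged block is all of $[n]$ and its separating splits are exactly the two-block partitions $\sigma=\{\beta_1,\beta_2\}\in\mathcal P(\mathbf f)$ with $i\in\beta_1\neq\beta_2\ni i+1$; since $\alpha_{1_{\tilde{\mathbf f}}}=1$, the survivor is $\sum_\sigma\alpha_\sigma\,c_{\mathbf a}(X\restriction\beta_1)\,c_{\mathbf a}(X\restriction\beta_2)$, the claimed formula.

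I expect the main obstacle to be organisational: keeping the three correspondences aligned — same-block partitions of $\mathbf f$ versus $\mathcal P(\tilde{\mathbf f})$, different-block partitions versus (coarse partition, separating split), and the inductive unfolding of the block $\tilde\beta_0$ — so that weights and cumulant products cancel summand by summand, and verifying that the adjacency of $i,i+1$ is inherited by every restriction so that the induction genuinely applies. The conceptual heart is that \Cref{it:highest-coeff:red} renders the merging of $i,i+1$ invisible to the weights while (iv') makes the weight of a separated partition factor, and these two facts are exactly what force all the cancellations to be exact.
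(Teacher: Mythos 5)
Your proposal is correct and follows essentially the same route as the paper's proof: induction on $n$ with base case $n=2$, the symmetric moment-cumulant expansion split according to whether $i,i+1$ lie in the same block, the reduction property identifying same-block partitions of $\mathbf f$ with $\mathcal P(\tilde{\mathbf f})$, condition (iv') factorizing the weights of separating partitions, and the induction hypothesis applied to the block containing the merged leg. The only difference is organizational and cosmetic: the paper expands both sides of $m_{\mathbf a}(X)=m_{\tilde{\mathbf a}}(\tilde X)$ and compares termwise, whereas you substitute the expansion into the recursion defining $c_{\tilde{\mathbf a}}(\tilde X)$; the resulting cancellations are identical.
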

\begin{proof}
  The claimed equality for the moments is obvious.
  The claim for the cumulants is proved by induction on $n$. For $n=2$, i.e.\ $X=x_1^{\wsquare}x_2^{\wsquare}$, we have
  \[c_{\tilde{\mathbf a}}(x_{\{1,2\}}^{\wsquare})=m_{\tilde{\mathbf a}}(x_{\{1,2\}}^{\wsquare})=m_{{\mathbf a}}(x_1^{\wsquare}x_2^{\wsquare})=c_{\mathbf a}(x_1^{\wsquare}x_2^{\wsquare})+ \alpha\left(\sqpartxy\right)c_{\mathbf a}(x_1^{\wsquare})c_{\mathbf a}(x_2^{\wsquare}).\]

  For general $n$, we can use the moment-cumulant relations for $m_{\mathbf a}(X)=m_{\tilde{\mathbf a}}(\tilde X)$ and obtain
  \allowdisplaybreaks\begin{align*}
    m_{\mathbf a}(X)&=\sum_{\pi\in \mathcal P({\mathbf{f}})} \alpha_\pi \prod_{\beta\in\pi} c_{\mathbf a}(X\restriction \beta)\\
    &\begin{multlined}[t][.9\linewidth]=\sum_{\substack{\pi\in \mathcal P({\mathbf{f}})\\i,i+1\in \hat \beta\in \pi}}\alpha_\pi c_{\mathbf a}(X\restriction\hat \beta)\prod_{\beta\in\pi\setminus \{\hat \beta\}}c_{\mathbf a}(X\restriction \beta)\\ 
    + \sum_{\substack{\rho\in\mathcal P(\mathbf f)\\\beta_1,\beta_2\in \rho\\i\in \beta_1\neq \beta_2\ni i+1}}\alpha_\rho c_{\mathbf a}(X\restriction \beta_1) c_{\mathbf a}(X\restriction \beta_2) \prod_{\beta\in\rho\setminus{\{\beta_1,\beta_2\}}}c_{\mathbf a}(X\restriction \beta)\end{multlined} \\
    &
      \begin{multlined}[t][.9\linewidth]
        =\sum_{\substack{\pi\in\mathcal P(\mathbf f)\\i,i+1\in \hat \beta\in \pi}} \alpha_\pi\Biggl(c_{\mathbf a}(X\restriction \hat \beta)  + \sum _{\substack{\{\beta_1,\beta_2\}\in\mathcal P(\hat \beta)\\i\in \beta_1\neq \beta_2\ni i+1}} \alpha_{\{\beta_1,\beta_2\}} c_{\mathbf a}(X\restriction \beta_1) c_{\mathbf a}(X\restriction \beta_2) \Biggr)\\\cdot\prod_{\beta\in\pi\setminus \{\hat \beta\}}c_{\mathbf a}(X\restriction \beta)
      \end{multlined}
    \\
           &
             \begin{multlined}[t][.9\linewidth]
               =c_{\mathbf a}(X)+\sum_{\substack{\{\beta_1,\beta_2\}\in\mathcal P(\mathbf f)\\i\in \beta_1\neq \beta_2\ni i+1}}\alpha_{\{\beta_1,\beta_2\}}c_{\mathbf a}(X\restriction \beta_1)c_{\mathbf a}(X\restriction \beta_2)\\
                +\sum_{\substack{1_{\mathbf{f}}\neq\pi\in\mathcal P({\mathbf{f}})\\i,i+1\in \hat \beta\in \pi}} \alpha_\pi\Biggl(c_{\mathbf a}(X\restriction \hat \beta)  + \sum _{\substack{\{\beta_1,\beta_2\}\in\mathcal P(\hat \beta)\\i\in \beta_1,i+1\in \beta_2}} \alpha_{\{\beta_1,\beta_2\}} c_{\mathbf a}(X\restriction \beta_1) c_{\mathbf a}(X\restriction \beta_2) \Biggr)\\\cdot \prod_{\beta\in\pi\setminus \{\hat \beta\}}c_{\mathbf a}(X\restriction \beta)
             \end{multlined}
  \end{align*}
  where we used $\alpha_\rho=\alpha_\pi\alpha_{\{\beta_1,\beta_2\}}$ for $\pi=\rho_{\beta_1\smile \beta_2}$. On the other hand, with $\tilde{\mathbf{f}}\in \mathcal F^{[n]/(i\sim i+1)}$, $\tilde{\mathbf{f}}(\{i,i+1\}):={\mathbf{f}}(i)={\mathbf{f}}(i+1)$ and $\tilde{\mathbf{f}}(\ell):={\mathbf{f}}(\ell)$ for $\ell\neq \{i,i+1\}$,
  \begin{align*}
    m_{\tilde{\mathbf a}}(\tilde X)
        &=\sum_{\sigma\in \mathcal P(\tilde {\mathbf{f}})}\alpha_\sigma \prod_{\beta\in\sigma}c_{\tilde{\mathbf a}}(\tilde X\restriction \beta)\\
        &=\sum_{\substack{\sigma\in\mathcal P(\tilde{\mathbf{f}})\\ \{i,i+1\}\in \tilde \beta}} \alpha_{\sigma} c_{\tilde{\mathbf a}}(\tilde X\restriction \tilde \beta)\prod_{\beta\in\sigma\setminus\{\tilde \beta\}}c_{{\mathbf a}}(X\restriction \beta)\\ 
        &
        \begin{multlined}[t][.9\textwidth]                                                                                  =c_{\tilde{\mathbf a}}(\tilde X) +    \sum_{\substack{1_{\tilde{\mathbf f}}\neq\sigma\in\mathcal P(\tilde{\mathbf{f}})\\ \{i,i+1\}\in \tilde \beta}} \alpha_{\sigma} c_{\tilde{\mathbf a}}(\tilde X\restriction \tilde \beta)\prod_{\beta\in\sigma\setminus\{\tilde \beta\}}c_{\mathbf a}(X\restriction \beta).
        \end{multlined}
  \end{align*}
  Recall that there is a canonical bijection between partitions $\sigma\in \mathcal P(\tilde{\mathbf f})$ and partitions $\pi\in\mathcal P(\mathbf f)$ with $i,i+1$ in the same block $\hat \beta\in\pi$. Also, the highest coefficients $\alpha_\sigma$ and $\alpha_\pi$ agree under this bijection by \Cref{thm:properties-of-highest-coeffs} \ref{it:highest-coeff:red}. Using the induction hypothesis on $c_{\mathbf a}(X\restriction\hat \beta)$ finishes the proof. 
\end{proof}

\begin{theorem}\label{thm:reconstruction}
  Suppose that the weights $(\alpha_\pi)_{\pi \in \mathcal{P}} $ are admissible. Then there exists a unique symmetric universal product with highest coefficients $(\alpha_\pi)_{\pi \in \mathcal{P}}$.
\end{theorem}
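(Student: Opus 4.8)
The plan is to build the product directly from the exponential/logarithm maps of \Cref{def:exp-log} and then to check the axioms of \Cref{def:universal-product} one by one. For linear functionals $\varphi_1,\varphi_2$ on multi-faced algebras $A_1,A_2$, I would consider the functional
\[
  \Psi:=\exp_\alpha\bigl(\log_\alpha\hat\varphi_1\oplus\log_\alpha\hat\varphi_2\bigr)
\]
on $\hat A_1\sqcup\hat A_2$, where $\oplus$ is the extended direct sum of \Cref{eq:direct-sum-extended} (so that ``mixed cumulants'' between the two algebras vanish). Since $\hat A_1\sqcup\hat A_2=\bigsqcup_{\kappa\in[2],\bsquare\in\mathcal F}T_0(A_\kappa^\bsquare)$ and $A_1\sqcup A_2=\bigsqcup_{\kappa\in[2],\bsquare\in\mathcal F}A_\kappa^\bsquare$, the canonical homomorphism $\mu\colon\hat A_1\sqcup\hat A_2\to A_1\sqcup A_2$ is the free product of the multiplication maps $T_0(A_\kappa^\bsquare)\to A_\kappa^\bsquare$, so its kernel is generated by the elements $a\otimes b-ab$ with $a,b\in A_\kappa^\bsquare$. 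Defining $\varphi_1\odot\varphi_2$ to be the functional on $A_1\sqcup A_2$ with $\Psi=(\varphi_1\odot\varphi_2)\circ\mu$ thus reduces to showing that $\Psi$ annihilates these generators, i.e.\ that $\Psi$ is unchanged when two neighbouring legs from the same $A_\kappa^\bsquare$ are multiplied together.

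This descent is exactly \Cref{lem:cumulants-of-product}. Reading $\psi:=\log_\alpha\hat\varphi_1\oplus\log_\alpha\hat\varphi_2$ as a family of cumulants and $\Psi=\exp_\alpha\psi$ as the associated moments, merging $a_i,a_{i+1}\in A_\kappa^\bsquare$ leaves the moment value invariant provided the cumulants change by the correction term of the lemma; and because $\psi$ vanishes on every block mixing the two algebras, that correction only involves blocks contained in $A_\kappa$, where it coincides with the merging identity satisfied by the genuine cumulants $\log_\alpha\hat\varphi_\kappa$ (\Cref{lem:cumulants-of-product} applied to $\varphi_\kappa$, via \Cref{obs:log-cumulants}). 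Admissibility is used precisely at this point, through \Cref{it:highest-coeff:red} and condition (iv$'$) of \Cref{def:admissible}, which are what make the lemma's identity hold. With well-definedness in hand, universality follows from \Cref{obs:exp-log-universal} (the maps $\exp_\alpha,\log_\alpha$ intertwine with the induced homomorphisms $\hat h_i$); the restriction property holds because $\psi$ reduces to $\log_\alpha\hat\varphi_1$ on words whose legs all lie in $A_1$, so that $\Psi=\exp_\alpha\log_\alpha\hat\varphi_1=\hat\varphi_1$ there; and symmetry follows from the symmetry of $\oplus$ together with the block-permutation invariance of $\alpha$ that is already built into $\exp_\alpha$ and $\log_\alpha$. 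To see that the highest coefficients of $\odot$ are again $\alpha$, I would extract from $\Psi(a_1\otimes\cdots\otimes a_n)=\sum_{\pi\in\mathcal P(\mathbf f)}\alpha_\pi\prod_{\beta\in\pi}\psi(a_\beta)$ the part that is of degree one in each $t_\kappa$ after the rescaling $\varphi_\kappa\mapsto t_\kappa\varphi_\kappa$: only the partition grouping the legs by algebra survives, and it contributes the weight $\alpha_{\mathbf s}$, matching \Cref{eq:highest-coefficients}.

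The main obstacle is associativity. The awkwardness is that $\varphi_1\odot\varphi_2$ lives on $A_1\sqcup A_2$, whose hat-algebra $\widehat{A_1\sqcup A_2}=\bigsqcup_{\bsquare}T_0(A_1^\bsquare\sqcup A_2^\bsquare)$ has ``fat'' generators (reduced words alternating between $A_1$ and $A_2$ within a single face), whereas the clean $\oplus$-description of the cumulants is visible only on the ``thin'' generators of $\hat A_1\sqcup\hat A_2$. The plan is to show that $(\varphi_1\odot\varphi_2)\odot\varphi_3$ and $\varphi_1\odot(\varphi_2\odot\varphi_3)$ both coincide with the functional on $A_1\sqcup A_2\sqcup A_3$ induced by $\exp_\alpha\bigl(\log_\alpha\hat\varphi_1\oplus\log_\alpha\hat\varphi_2\oplus\log_\alpha\hat\varphi_3\bigr)$. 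For this I must check that the cumulant functional $\log_\alpha\widehat{\varphi_1\odot\varphi_2}$ restricts on thin generators to $\log_\alpha\hat\varphi_1\oplus\log_\alpha\hat\varphi_2$ and that its values on fat generators are forced by the merging identity already used for well-definedness; the two nestings then both collapse onto the associative and symmetric triple direct sum. This is the step where the reductions of \Cref{lem:cumulants-of-product} must be iterated with care, and I expect the bulk of the bookkeeping to sit here.

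Uniqueness, finally, I would not reprove from scratch: by \Cref{thm:unique-highest-coefficients} every symmetric universal product is recovered from its highest coefficients through exactly the $\exp_\alpha/\log_\alpha$ formula used to define $\odot$ (in the symmetric case the cumulant Lie algebra is trivial and the exponential and logarithm are fixed by the $\alpha_\pi$). Hence any symmetric universal product with highest coefficients $\alpha$ must equal the $\odot$ just constructed, completing the proof.
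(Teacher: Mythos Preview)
Your plan is correct and matches the paper's proof almost step for step: define the product via $\exp_\alpha(\log_\alpha\hat\varphi_1\oplus\log_\alpha\hat\varphi_2)$, use \Cref{lem:cumulants-of-product} together with condition (iv$'$) for well-definedness, derive universality from \Cref{obs:exp-log-universal}, read off symmetry and the restriction property, verify the highest coefficients by linearization, and cite \Cref{thm:unique-highest-coefficients} for uniqueness.

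The one place where the paper is cleaner than your sketch is associativity. Rather than iterating \Cref{lem:cumulants-of-product} to control $\log_\alpha\widehat{\varphi_1\odot\varphi_2}$ on ``fat'' generators, the paper introduces the explicit algebra homomorphism
\[
  \lambda_{12}\colon \bigsqcup_{\bsquare\in\mathcal F} T_0(A_1^{\bsquare}\sqcup A_2^{\bsquare})\longrightarrow \bigsqcup_{\bsquare\in\mathcal F}\bigl(T_0(A_1^{\bsquare})\sqcup T_0(A_2^{\bsquare})\bigr)
\]
that resolves each fat letter into its alternating word, observes the factorization $\mu_{12}=(\mu_1\sqcup\mu_2)\circ\lambda_{12}$, and uses that $\exp_\alpha$ commutes with precomposition by any face-respecting algebra homomorphism. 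This yields $\log_\alpha\widehat{\varphi_1\odot\varphi_2}=(\log_\alpha\hat\varphi_1\oplus\log_\alpha\hat\varphi_2)\circ\lambda_{12}$ in one stroke, and associativity drops out without any bookkeeping. Your intuition that the fat values are ``forced by the merging identity'' is exactly right, but recognizing $\lambda_{12}$ as an honest homomorphism and invoking naturality of $\exp_\alpha$ replaces the anticipated iteration by a two-line argument.
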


\begin{proof}
  The uniqueness statement is proved in \cite[Th.~7.2]{MaSc17}, see \Cref{thm:unique-highest-coefficients}.

  Let $\varphi_\kappa\colon A_\kappa\to\mathbb C$ be linear functionals on 2-faced algebras ($\kappa\in\{1,2\}$). Recall \Cref{def:exp-log} of $\exp_\alpha$ and $\log_\alpha$ and \Cref{def:hat}, which sets the notation for lifting $\varphi_k$ to linear functionals $\hat\varphi_\kappa=\varphi_\kappa\circ\mu_\kappa$ on the tensor algebras $T_0(\bigoplus_{\bsquare\in \mathcal F}A_\kappa^{\bsquare})$. 
  We simply write $\exp:=\exp_\alpha$ and $\log:=\log_\alpha$ in the following. 
  We define
  \begin{align}
    \varphi_1\mathbin{\widetilde\odot}\varphi_2:=\exp\bigl(\log(\hat\varphi_1)\oplus \log(\hat\varphi_2)\bigr)\in T_0\left(\bigoplus_{\bsquare\in\mathcal F} (A_1^\bsquare\oplus A_2^\bsquare)\right)'.
  \end{align}
  The main task is now to prove that $\varphi_1\mathbin{\widetilde\odot}\varphi_2$ vanishes on the ideal $\mathcal I:=\operatorname{ker}(\mu_1\sqcup \mu_2)$ in
  \[T_0\left(\bigoplus_{\bsquare\in\mathcal F} (A_1^\bsquare\oplus A_2^\bsquare)\right)=\bigsqcup_{\bsquare\in\mathcal F}\Bigl(T_0(A_1^\bsquare)\sqcup T_0(A_2^\bsquare)\Bigr)\]
  (i.e.\ the ideal generated by the relations $a\otimes b=ab$ for $a,b\in A_\kappa^\bsquare$), so that $\varphi_1\mathbin{\widetilde\odot}\varphi_2$ descends to a functional $\varphi_1\odot\varphi_2$ with $(\varphi_1\odot\varphi_2)\circ(\mu_1\sqcup\mu_2)=(\varphi_1\mathbin{\widetilde\odot}\varphi_2)$ on the quotient
  \[A_1\sqcup A_2={T_0\left(\bigoplus_{\bsquare\in\mathcal F} (A_1^\bsquare\oplus A_2^\bsquare)\right)}/{\mathcal I}.\]
  Let $\mathbf s=\mathbf f\times \mathbf b\in([2]\times\mathcal F)^n$ with $\mathbf s(i)=\mathbf s(i+1) = (j,\bsquare)$ for some $i\in[n-1],j\in[2],\bsquare\in\mathcal F$. Let $a_1\cdots a_n\in A^{\mathbf s}\subset A_1\sqcup A_2$ with $a_\ell\in A_{\mathbf b(\ell)}^{{\mathbf{f}}(\ell)}$, in particular, $a_{i}$ and $a_{i+1}$ lie in the same direct summand $A_j^{\bsquare}$ of the free product $A_1\sqcup A_2$. Define $\tilde{\mathbf{f}}\in \mathcal F^{[n]/(i\sim i+1)}$, $\tilde{\mathbf{f}}(\{i,i+1\})={\mathbf{f}}(i)={\mathbf{f}}(i+1)$ and $\tilde{\mathbf{f}}(\ell)={\mathbf{f}}(\ell)$ for $\ell\neq \{i,i+1\}$. Analogously, we define $\tilde{\mathbf{b}}$ and $\tilde{\mathbf{s}}$.  With $\mathbf a:=(a_1,\ldots, a_n)$, $X:=x_1\cdots x_n\in \mathbb C\langle x_1,\ldots, x_n\rangle$, $\tilde{\mathbf a}:=(a_1,\ldots,a_ia_{i+1},\ldots a_n)$, and $\tilde X:=x_1\cdots x_{\{i,i+1\}} x_n\in \mathbb C\langle x_1,\ldots,x_{\{i,i+1\}}, x_n\rangle$, we have
  \begin{itemize}
  \item for $\beta\subset \{1,\ldots, i,i+1,\ldots, n\}$ with $a_\ell\in A_\kappa$ for all $\ell\in \beta $
    \[\log\hat\varphi_\kappa (\mathbf a \restriction \beta)=c_{\mathbf a}(X\restriction \beta),\]
  \item for $\beta\subset \{1,\ldots, \{i,i+1\},\ldots, n\}$ with $a_\ell\in A_\kappa$ for all $\ell\in \beta $
    \[\log\hat\varphi_\kappa (\tilde{\mathbf a} \restriction \beta)=c_{\tilde{\mathbf a}}(\tilde{X}\restriction \beta).\]
  \end{itemize}
  Let us say that a partition $\pi$ is \emph{adapted to $\mathbf b$}, and write $\pi\prec \mathbf b$, if $\mathbf b$ is constant on blocks of $\pi$ (this is the first condition of $\pi$ being adapted to $\mathbf s$). Note that $(\log\hat\varphi_1\oplus\log\hat\varphi_2)^{\otimes |\pi|}(a_\pi)=0$ when $\pi$ is not adapted to $\mathbf b$; indeed, this follows directly from the way we identify the direct sum of linear functionals with a linear functional on the tensor algebra in \Cref{eq:direct-sum-extended}.
  With this in hand, we calculate
  \begin{multline}
    {\exp(\log \hat\varphi_1\oplus\log\hat\varphi_2)(a_1\otimes\cdots\otimes a_ia_{i+1}\otimes\cdots \otimes a_n)}\\=\sum_{\pi\in\mathcal P(\tilde{\mathbf{f}})} \alpha_\pi (\log\hat\varphi_1\oplus\log\hat\varphi_2)^{\otimes |\pi|}(a_\pi)
    =\sum_{\substack{\pi\prec\tilde{\mathbf{b}}\\\{i,i+1\}\in\tilde \beta\in\pi}} \alpha_\pi c_{\tilde{\mathbf a}}(\tilde X\restriction \tilde \beta)\prod_{\beta\in\pi\setminus\{\tilde \beta\}}c_{\tilde{\mathbf a}}(\tilde X\restriction \beta).\label{eq:exp(log+log)-identified}
  \end{multline}
  On the other hand, if the two legs $i$ and $i+1$ are not identified, then there are partitions adapted to $\mathbf b$
  for which $i,i+1$ lie in the same block as well as ones for which $i,i+1$ lie in different blocks. This leads to
  \begin{align}
    \MoveEqLeft\exp(\log \hat\varphi_1\oplus \log\hat\varphi_2)(a_1\otimes\cdots\otimes a_i\otimes a_{i+1}\otimes\cdots \otimes a_n)\label{eq:exp(log+log)-not_identified}\\\notag
    &=\sum_{\pi\in\mathcal P({\mathbf{f}})} \alpha_\pi (\log\hat\varphi_1\oplus\log\hat\varphi_2)^{\otimes |\pi|}(a_\pi)\\\notag
    &=\sum_{\substack{\pi\prec{\mathbf{b}}\\i,i+1\in\hat \beta\in\pi}}\alpha_\pi c_{{\mathbf a}}( X\restriction \hat \beta)\prod_{\beta\in\pi\setminus\{\hat \beta\}}  c_{{\mathbf a}}( X\restriction \beta)\\\notag
    &\quad+\sum_{\substack{\sigma\prec{\mathbf{b}}\\ \beta_1,\beta_2\in\sigma \\ i\in \beta_1\neq \beta_2\ni i+1}}
    \alpha_\sigma c_{{\mathbf a}}( X\restriction \beta_1)c_{{\mathbf a}}( X\restriction \beta_2)\prod_{\beta\in\sigma\setminus\{\beta_1,\beta_2\}}  c_{{\mathbf a}}( X\restriction \beta)\\\notag
    &=\sum_{\substack{\pi\prec{\mathbf{b}}\\i,i+1\in\hat \beta\in\pi}} \alpha_\pi\left(c_{{\mathbf a}}( X\restriction \hat \beta)+\sum_{\beta_1\dot\cup \beta_2=\hat \beta} \alpha_{\{\beta_1,\beta_2\}}c_{{\mathbf a}}( X\restriction \beta_1)c_{{\mathbf a}}( X\restriction \beta_2)\right)\prod_{\beta\in \pi\setminus\{\hat \beta\}} c_{{\mathbf a}}( X\restriction \beta),
  \end{align}
  using $\alpha_\pi\alpha_{\{\beta_1,\beta_2\}}=\alpha_\sigma$ when $\sigma=\pi\setminus\{\hat \beta\}\cup\{\beta_1,\beta_2\}$, i.e.\ $\pi=\sigma_{\beta_1\smile \beta_2}$. 
  The two expressions derived in \eqref{eq:exp(log+log)-identified} and \eqref{eq:exp(log+log)-not_identified} agree by \Cref{lem:cumulants-of-product} and, therefore, we have a well-defined map $\varphi_1\odot\varphi_2(a_1\cdots a_n)=\varphi_1\odot\varphi_2(a_1\otimes\cdots \otimes a_n+\mathcal I):=\varphi_1\mathbin{\widetilde\odot}\varphi_2(a_1\otimes\cdots \otimes a_n)$.

  Let us verify that $\odot$ is indeed a symmetric universal product. To prove universality, recall \Cref{obs:exp-log-universal}. Let $h_\kappa\colon B_\kappa\to A_\kappa$ be $\mathcal F$-faced algebra homomorphisms and $\varphi_\kappa\colon A_\kappa\to\mathbb C$ linear functionals. Then, for $b=b_1\cdots b_n\in B_1\sqcup B_2$ and $\hat b=b_1\otimes\cdots \otimes b_n\in \hat B_1\sqcup \hat B_2$,
    \begin{align*}
      (\varphi_1\circ h_1)\odot(\varphi_2\circ h_2)(b)
      &= \exp(\log(\widehat{\varphi_1\circ h_1})\oplus \log (\widehat{\varphi_2\circ h_2}))(\hat b)\\
      &=\exp((\log\hat\varphi_1 \circ \hat h_1)\oplus (\log\hat\varphi_2\circ\hat h_2))(\hat b)\\
      &= \exp((\log\hat\varphi_1\oplus \log\hat\varphi_2)\circ (\hat h_1\sqcup \hat h_2))(\hat b)\\
      &= \exp(\log\hat\varphi_1\oplus \log\hat\varphi_2)\circ (\hat h_1\sqcup \hat h_2)(\hat b)\\
      &= ((\varphi_1\odot \varphi_2)\circ(h_1\sqcup h_2))(b).
    \end{align*}
    Symmetry and unitality are immediate for $\widetilde\odot$ and therefore descend to $\odot$. To prove associativity is slightly more involved. We write $b\in A_1\sqcup A_2\sqcup A_3$ as $(\mu_1\sqcup \mu_2\sqcup \mu_3)(\hat b)$ with \[\hat b\in T_0\left(\bigoplus_{\bsquare\in\mathcal F} (A_1^\bsquare\oplus A_2^\bsquare\oplus A_3^\bsquare)\right)=\bigsqcup_{\bsquare\in\mathcal F}T_0(A_1^\bsquare)\sqcup T_0(A_2^\bsquare)\sqcup T_0(A_3^\bsquare)\]
    and claim that \[((\varphi_1\odot\varphi_2)\odot\varphi_3)(b)=(\varphi_1\odot(\varphi_2\odot\varphi_3))(b)=\exp(\log\hat\varphi_1\oplus\log\hat\varphi_2\oplus\log\hat\varphi_3)(\hat b).\]
    The crucial observation is that $\log\widehat{\varphi_1\odot\varphi_2} = (\log\hat\varphi_1\oplus\log\hat\varphi_2)\circ\lambda_{12}$ with  \[\lambda_{12}\colon T_0\left(\bigoplus_{\bsquare\in\mathcal F}(A_1\sqcup A_2)^{\bsquare}\right)=\bigsqcup_{\bsquare\in\mathcal F} T_0(A_1^{\bsquare}\sqcup A_2^{\bsquare})\to T_0\left(\bigoplus_{\bsquare\in\mathcal F} (A_1^{\bsquare}\oplus A_2^{\bsquare})\right)= \bigsqcup_{\bsquare\in\mathcal F} \left(T_0(A_1^{\bsquare})\sqcup T_0(A_2^{\bsquare})\right) \]
    the unique algebra homomorphism extending the canonical embeddings $A_1^{\bsquare}\sqcup A_2^{\bsquare}\hookrightarrow T_0(A_1^{\bsquare})\sqcup T_0(A_2^{\bsquare})$. Indeed, $\widehat{\varphi_1\odot\varphi_2}=(\varphi_1\odot\varphi_2)\circ \mu_{12}$ for the canonical map
    \[\mu_{12}\colon  T_0\left(\bigoplus_{\bsquare\in\mathcal F}(A_1\sqcup A_2)^{\bsquare}\right)\to A_1\sqcup A_2,\]
    which factorizes as $\mu_{12}=(\mu_1\sqcup \mu_2)\circ\lambda_{12}$. From $(\varphi_1\odot\varphi_2)\circ(\mu_1\sqcup\mu_2)=\varphi_1\mathbin{\widetilde\odot}\varphi_2$, we conclude that \[\widehat{\varphi_1\odot\varphi_2}=(\varphi_1\odot\varphi_2)\circ\mu_{12} = (\varphi_1\odot\varphi_2)\circ(\mu_1\sqcup\mu_2)\circ\lambda_{12} = (\varphi_1\mathbin{\widetilde\odot}\varphi_2)\circ\lambda_{12}  .\] From \Cref{def:exp-log} it is obvious that $\exp(\psi\circ\lambda_{12})=\exp(\psi)\circ\lambda_{12}$ for all $\psi\in T_0\left(\bigoplus_{\bsquare\in\mathcal F}(A_1^{\bsquare}\oplus A_2^{\bsquare})\right)'$, therefore $\log\widehat{\varphi_1\odot\varphi_2} = (\log\hat\varphi_1\oplus\log\hat\varphi_2)\circ\lambda_{12}$ as claimed. The rest is easy:
    \begin{align*} ((\varphi_1\odot\varphi_2)\odot\varphi_3)(b)&= \exp(\log(\widehat{\varphi_1\odot\varphi_2})\oplus \log \hat\varphi_3)(\hat b)\\
      &= \exp(\log(\hat\varphi_1)\oplus\log(\hat\varphi_2)\oplus \log (\hat\varphi_3))((\lambda_{12}\sqcup\mathrm{id})(\hat b))\\
      &=\exp(\log(\hat\varphi_1)\oplus \log(\hat\varphi_2)\oplus \log (\hat\varphi_3))(\hat b);
    \end{align*}
    note that $\lambda_{12}$ is actually a projection onto a subalgebra, so we can safely identify $\hat b$ with the corresponding element in the domain of $\lambda_{12}\sqcup\mathrm{id}$ instead of introducing yet another symbol for its preimage.
    The other direction, i.e.\ $(\varphi_1\odot(\varphi_2\odot\varphi_3))(b)=\exp(\log(\hat\varphi_1)\oplus \log(\hat\varphi_2)\oplus \log (\hat\varphi_3))(\hat b)$, follows by symmetry.

  To check that the highest coefficients of $\odot$ are indeed given by $\alpha$, it is enough to consider products of two functionals $\varphi_1,\varphi_2$. For $a=a_1\cdots a_n\in A^{\mathbf s}$,  $\mathbf s=\mathbf b\times\mathbf f\in([2]\times\mathcal F)^{*}$, and $\sigma\in\mathcal P(\mathbf f)$ the partition with blocks $\beta_\kappa=\{\ell:\mathbf b(\ell)=\kappa\}$, we find
  \begin{align*}
    \MoveEqLeft\left.\frac{\partial^2}{\partial t_1\partial t_2}(t_1\varphi_1)\odot(t_2\varphi_2) (a_1\cdots a_n)\right\vert_{\mathbf t=0}
    =\sum_{\pi\in\mathcal P({\mathbf{f}})} \left.\frac{\partial^2}{\partial t_1\partial t_2}\alpha_\pi\cdot (\log t_1 \hat\varphi_1\oplus \log t_2\hat\varphi_2)^{\otimes |\pi|}(a_\pi)\right\vert_{\mathbf t=0}\\
    &=\left.\frac{\partial^2}{\partial t_1\partial t_2}\alpha_\sigma\cdot\log t_1\hat\varphi_1 (a_{\beta_1}) \log t_2\hat\varphi_2 (a_{\beta_2})\right\vert_{\mathbf t=0}\\
    &= \alpha_\sigma\cdot\varphi_1 (a_{\beta_1}) \varphi_2 (a_{\beta_2})
  \end{align*}
  as needed.\footnote{The notation $a_\beta=\mathop{\overrightarrow{\prod}}_{i\in \beta}a_i$, $a_\pi=\bigotimes_{\beta\in \pi}a_\beta$ refers to $a_1\otimes\cdots \otimes a_n$, but note that by well-definedness the choice of decomposition of $a_1\cdots a_n\in A_1\sqcup A_2$ as a tensor in $T_0(\bigoplus_{\bsquare\in\mathcal F} A_1^{\bsquare}\oplus A_2^{\bsquare})$ does not influence the result!}
\end{proof}

The formula to compute mixed moments can be considerably simplified in the special case where the the highest coefficients are only 0 or 1. 

\begin{definition}\label{def:combinatorial}
  We say that a symmetric universal product is \emph{combinatorial with partition set $\Pi$} if its highest coefficients are all either 0 or 1 and $\Pi=\{\pi\in\mathcal P:\alpha_{\pi}=1\}$.
\end{definition}

\begin{theorem}\label{thm:combinatorial-mixed-moments}
  Let $\odot$ be a combinatorial universal product with admissible partition set $\Pi$ one of the 12 sets of \Cref{thm:classification-two-faced-admissible-sets} (in particular, $\mathcal F=\{\wcol, \bcol\}$ a two element set). Furthermore, let $\varphi_\kappa$ be a linear functional on a multi-faced algebra $A_\kappa$ ($\kappa\in[k]$), and $a=a_1\cdots a_n\in A^{\mathbf s}$ for $\mathbf s=\mathbf b\times \mathbf f\in([k]\times\mathcal F)^{n}$. Denote 
  \begin{itemize}
  \item $\pi\in\mathcal P(\mathbf f)$ the multi-faced partition with blocks $\beta_\kappa:=\{i:\mathbf b(i)=\kappa\}$ (whenever non-empty),
  \item $\Pi_{\leq \pi}:=\{\sigma\in \Pi:\sigma\leq \pi\}$ the set of refinements of $\pi$ inside $\Pi\cap\mathcal P(\mathbf f)$,
  \item $S$ the set of maximal elements of $\Pi_{\leq\pi}$ (i.e.\ coarsest refinements of $\pi$ inside $\Pi$),
  \item $\wedge R$ is the maximal common refinement of partitions in $R\subset \Pi\cap \mathcal P(\mathbf f)$, $\wedge\emptyset:=1_{\mathbf f}$,
  \item $\Phi:=\varphi_1\odot\cdots\odot\varphi_k$,
  \item $\hat\Phi$ the lift of $\Phi$ to $\displaystyle T_0\Bigl(\bigoplus\limits_{\ell\in[k], \bsquare\in\mathcal F} A_\ell^{\bsquare}\Bigr)$.
  \end{itemize}
  Then
  \[\Phi(a)=\sum_{\emptyset\neq R\subseteq S} (-1)^{\# R-1} \hat\Phi^{\otimes |\wedge R|}(a_{\wedge R}).\]
\end{theorem}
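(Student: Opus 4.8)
The plan is to reduce the asserted inclusion--exclusion formula to a cumulant expansion followed by a lattice argument on the poset $S$. First I would rewrite $\Phi(a)$ through the exponential of the cumulant functional. By \Cref{thm:unique-highest-coefficients}, extended from two to $k$ factors by the associativity computation in the proof of \Cref{thm:reconstruction},
\[
  \Phi(a)=\exp_\odot\Bigl(\bigoplus_{\kappa\in[k]}\log_\odot\hat\varphi_\kappa\Bigr)(a_1\otimes\cdots\otimes a_n).
\]
Since $\odot$ is combinatorial, the sum defining $\exp_\odot$ in \Cref{def:exp-log} runs only over $\rho\in\Pi\cap\mathcal P(\mathbf f)$; and since the direct-sum functional of \Cref{eq:direct-sum-extended} annihilates any tensor $a_\beta=\mathop{\overrightarrow{\prod}}_{i\in\beta}a_i$ whose legs do not all lie in a single $A_\kappa$, only the refinements $\rho\le\pi$ survive. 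Writing $\kappa(\beta)$ for the index with $\beta\subseteq\beta_{\kappa(\beta)}$, setting $c_\beta:=\log_\odot\hat\varphi_{\kappa(\beta)}(a_\beta)$ (a cumulant, by \Cref{obs:log-cumulants}) and $c_\rho:=\prod_{\beta\in\rho}c_\beta$, this yields the starting point
\[
  \Phi(a)=\sum_{\rho\in\Pi_{\le\pi}}c_\rho .
\]

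Next I would use the single-algebra moment--cumulant relation for each block $\beta\le\pi$: since $a_\beta\in A_{\kappa(\beta)}$ and all weights are $0$ or $1$, the moment $m_\beta:=\hat\Phi(a_\beta)=\varphi_{\kappa(\beta)}(a_\beta)$ satisfies, by \eqref{eq:symmetric_moment-cumulant_relation}, $m_\beta=\sum_{\sigma\in\Pi\cap\mathcal P(|\beta|)}\prod_{\gamma\in\sigma}c_\gamma$. Multiplying over the blocks of a fixed $\sigma\in\Pi$ with $\sigma\le\pi$ and expanding gives $\prod_{\beta\in\sigma}m_\beta=\sum_{\rho\le\sigma,\ \rho\restriction\beta\in\Pi\ \forall\beta\in\sigma}c_\rho$. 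The crux of the argument is then the \emph{multiplicativity} of admissible sets, namely that for $\sigma\in\Pi$ and $\rho\le\sigma$ one has $\rho\in\Pi$ if and only if $\rho\restriction\beta\in\Pi$ for every block $\beta$ of $\sigma$, together with the fact that $\Pi$ is \emph{closed under meets}. Granting these, the previous sum collapses to $\sum_{\rho\in\Pi,\ \rho\le\sigma}c_\rho$, so that
\[
  \hat\Phi^{\otimes|\sigma|}(a_\sigma)=\prod_{\beta\in\sigma}m_\beta=\sum_{\rho\in\Pi,\ \rho\le\sigma}c_\rho
  \qquad(\sigma\in\Pi,\ \sigma\le\pi).
\]

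With this identity in hand the theorem is pure inclusion--exclusion. As $\Pi_{\le\pi}$ is finite with set of maximal elements $S$, every $\rho\in\Pi_{\le\pi}$ lies below some $s\in S$, whence $\Pi_{\le\pi}=\bigcup_{s\in S}\{\rho\in\Pi:\rho\le s\}$; moreover $\bigcap_{s\in R}\{\rho\in\Pi:\rho\le s\}=\{\rho\in\Pi:\rho\le\wedge R\}$ for every $R\subseteq S$. Applying the inclusion--exclusion principle to $\Phi(a)=\sum_{\rho\in\Pi_{\le\pi}}c_\rho$, and then the displayed identity with $\sigma=\wedge R$ (which lies in $\Pi$ by meet-closure and satisfies $\wedge R\le\pi$), gives
\[
  \Phi(a)=\sum_{\emptyset\ne R\subseteq S}(-1)^{\#R-1}\sum_{\rho\in\Pi,\ \rho\le\wedge R}c_\rho
         =\sum_{\emptyset\ne R\subseteq S}(-1)^{\#R-1}\,\hat\Phi^{\otimes|\wedge R|}(a_{\wedge R}),
\]
which is the asserted formula.

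The main obstacle is the pair of structural facts invoked above: multiplicativity and meet-closure of each admissible set, which should be read off the explicit local characterizations in \Cref{def:admissible-sets-concrete}. The implications ``$\rho\in\Pi\Rightarrow\rho\restriction\beta\in\Pi$'' and the preservation of $\Pi$ under passing to a common refinement are immediate in the easy direction, because restricting or refining a diagram only deletes arcs, so an inner leg stays inner and a crossing or a connection stays a crossing or a connection. The substantive content is the converse dual principle: a crossing, or a forbidden monochromaticity or inner-leg violation, that appears in a glued partition $\rho\le\sigma\in\Pi$ (respectively in a common refinement $s_1\wedge s_2$) must already occur inside a single block of $\sigma$ or in $\sigma$ itself (respectively in $s_1$ or $s_2$). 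This is routine for the color-blind sets $\mathrm I_{\wcol\bcol}$, $\mathrm{NC}_{\wcol\bcol}$, $\mathrm A_{\wcol\bcol}$, but for the color-sensitive sets $\mathrm{biNC}_{\wcol\bcol}$, $\mathrm{pNC_{\wcol\bcol}}$, $\mathrm{pC_{\wcol\bcol}}$ and $\mathrm{NC_\wcol A_\bcol}$ (and their face-swaps) it must be checked against the precise color conditions governing which crossings and nestings are admitted, and that case analysis is where the real work lies.
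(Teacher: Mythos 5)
Your proof is correct and follows essentially the same route as the paper's: the same $k$-fold $\exp$/$\log$ cumulant expansion, the same key observation that for $\sigma\le\rho\in\Pi$ one has $\sigma\in\Pi$ if and only if $\sigma\restriction\beta\in\Pi$ for all $\beta\in\rho$, and your inclusion--exclusion over the union of the down-sets $\{\rho\in\Pi:\rho\le s\}$, $s\in S$, is just a repackaging of the paper's vanishing alternating sum $\sum_{R\subseteq S}(-1)^{\#R}\,\hat\Phi^{\otimes|\wedge R|}(a_{\wedge R})=0$ obtained there via the binomial identity. The only notable difference is that you state explicitly the meet-closure fact $\wedge R\in\Pi$, which the paper uses tacitly (it is needed there too, in order to apply the key observation with $\rho=\wedge R$), and which, like the block-multiplicativity property itself, is deferred in both arguments to a routine per-class verification for the twelve admissible sets.
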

\begin{proof}
  Put $\Psi:=\log \hat\Phi=\log \hat\varphi_1\oplus\cdots\oplus\log\hat\varphi_k$. The key observation is that a refinement $\sigma$ of a partition $\rho\in \Pi$ belongs to $\Pi$ if and only if $\sigma\restriction \beta\in \Pi$ for all blocks $\beta\in \rho$; this can be easily seen for each of the 12 admissible sets of partitions individually. Using the moment cumulant formula on each block of $\wedge R$ and the observation on refinements just made, we find
    \begin{align}\label{eq:combinatorial-proof}
      \sum_{R\subseteq S} (-1)^{\#R} \hat\Phi^{\otimes |\wedge R|}(a_{\wedge R})= \sum_{R\subseteq S} \sum_{\substack{\sigma\leq {{\wedge} R}\\\sigma\in\Pi}} (-1)^{\#R} \Psi^{\otimes |\sigma|}(a_{\sigma})
    \end{align}
    (equality of the summands for $R=\emptyset$ will be discussed below.)
    Now, the same partition $\sigma\in\Pi$ can of course be a refinement of $\wedge R$ for different $R\subseteq S$. Denote $T(\sigma):=\{\rho\in S:\sigma\leq \rho\}$ and $n(\sigma):=\# T(\sigma)$. Then $\sigma\leq \wedge R$ if and only if $R\subseteq T(\sigma)$, and for every $k\in\{0,1,\ldots,n(\sigma)\}$ there are $\binom{n(\sigma)}{k}$ many such $R$ with $\# R=k$. If $n(\sigma)=0$, i.e.\ if $\sigma$ is not a refinement of $\pi$, then $\Psi^{\otimes|\sigma|}(a_\sigma)=0$ because mixed cumulants vanish. This leads to    
    \begin{align*}
       \textrm{RHS of \eqref{eq:combinatorial-proof}}=\sum_{\sigma\in\Pi_{\leq\pi}}\underbrace{{\sum_{k=0}^{n(\sigma)} (-1)^k \binom{n(\sigma)}{k}}}_{=0} \Psi^{\otimes |\sigma|}(a_\sigma)=0.
    \end{align*}
    Recall that we defined $\wedge\emptyset:=1_{\mathbf f}$, so that
    \[\Phi(a)=\hat\Phi(a_{1_{\mathbf f}})=\sum_{\sigma\in \mathcal P(\mathbf f)\cap \Pi}\Psi^{\otimes|\sigma|}(a_\sigma)=\sum_{\substack{\sigma\leq 1_{\mathbf f}\\\sigma\in\Pi}}\Psi^{\otimes|\sigma|}(a_\sigma);\]
    this confirms that the choice is consistent with \Cref{eq:combinatorial-proof}, and it also shows that the statement of the theorem is equivalent to $\text{LHS of \eqref{eq:combinatorial-proof}}=0$.
  \end{proof}

  \begin{example}
    Let $\odot$ be the universal product associated with $\mathrm{NC_{\wcol}A_{\bcol}}$. Then $\partxyoxoxyo$ has set of coarsest refinements $S=\{\partxyoxozyo,\partxyoxoxzo\}$ in $\mathrm{NC_{\wcol}A_{\bcol}}$ with $\wedge S=\partxyoxozwo$, leading to
    \[\varphi\odot\psi(a_1^{\wcol}b_1^{\bcol} a_2^{\bcol}a_3^{\wcol}b_2^{\bcol})
      =\varphi(a_1^{\wcol}a_2^{\bcol})\varphi(a_3^{\wcol})\psi(b_1^{\bcol}b_2^{\bcol})
      +\varphi(a_1^{\wcol}a_2^{\bcol}a_3^{\wcol})\psi(b_1^{\bcol})\psi(b_2^{\bcol})
      - \varphi(a_1^{\wcol}a_2^{\bcol})\varphi(a_3^{\wcol})\psi(b_1^{\bcol})\psi(b_2^{\bcol})\]
    for all $\varphi\in A',\psi\in B'$, $a_i^{\gbullet}\in A^{\gbullet}$ ($i=1,2,3$), and $b_j^{\gbullet}\in B^{\gbullet}$ ($j=1,2$). 
  \end{example}

\section{Unit preserving universal products}\label{unit-preserving}

In \cite{DAGSV22}, Diaz-Aguilera, Gaxiola, Santos, and Vargas characterize when the moment cumulant relation associated with weights on partitions leads to independent constants, finding this to be the case if and only if the weights do not change when removing or inserting a singleton from or to the partition.
Manzel and Sch{\"u}rmann discuss in \cite[Rem.\ 3.1]{MaSc17} the relation between universal products in the category of multi-faced algebras and in the category of multi-faced unital algebras and observe that while a product for the unital category always gives rise to a product for the non-unital category, the other way round requires a condition, namely that the universal product \emph{respects the units} or is  \emph{unit preserving} as we prefer to write in this article.
In this section we briefly review universal products in the category of multi-faced unital algebras, define what exactly it means to be unit preserving, generalize the definition of \emph{singleton inductive weights} to the multi-faced setting, and finally characterize unit preserving symmetric universal product as those whose highest coefficients are singleton inductive.

In the category of unital algebras with unital algebra homomorphisms, the coproduct is given by the \emph{unital free product}, which can be constructed from the non-unital free product as \[\mathcal A_1\unitalsqcup \mathcal A_2:=(\mathcal A_1\sqcup \mathcal A_2)/{\langle 1_{\mathcal A_1}-1_{\mathcal A_2}\rangle};\]
here $\langle \cdot\rangle$ denotes the generated two-sided ideal.

\begin{definition}\ 
  \begin{itemize}[beginpenalty=10000]
  \item A \emph{multi-faced unital algebra} is a unital algebra $\mathcal \mathcal A$ with unital subalgebras $\mathcal A^{\bsquare}$, $\bsquare\in\mathcal F$, such that the canonical unital algebra homomorphism $\bigunitalsqcup_{\bsquare\in\mathcal F}\mathcal A^{\bsquare}\to \mathcal A$ is an isomorphism, in which case we write $\mathcal A=\bigunitalsqcup_{\bsquare\in\mathcal F}\mathcal A^{\bsquare}$.
  \item A \emph{multi-faced unital algebra homomorphism} is a unital algebra homomorphism which maps face into face.
  \item The unital free product of multi-faced unital algebras $\mathcal A_1,\mathcal A_2$ is a multi-faced unital algebra with $(\mathcal A_1\unitalsqcup \mathcal A_2)^{\bsquare}:=\mathcal A_1^{\bsquare}\unitalsqcup \mathcal A_2^{\bsquare}$.
  \item A linear functional $\phi\colon \mathcal A\to\mathbb C$ on a multi-faced unital algebra is \emph{unital} if $\phi(1_\mathcal A)=1$. 
  \end{itemize}
\end{definition}

Multi-faced unital algebras with multi-faced unital algebra homomorphisms form a category, in which $\unitalsqcup$ is a coproduct. One can adapt \Cref{def:universal-product} to the unital situation and obtains the following.

\begin{definition}
  A \emph{universal product in the category of multi-faced unital algebras} is a binary product operation for unital linear functionals on multi-faced unital algebras which associates with unital functionals $\phi_1,\phi_2$ on multi-faced unital algebras $\mathcal A_1,\mathcal A_2$, respectively, a unital functional $\phi_1\odot\phi_2$ on $\mathcal A_1\unitalsqcup \mathcal A_2$ such that
  \begin{itemize}
  \item $(\phi_1\circ h_1)\odot(\phi_2\circ h_2)=(\phi_1\odot\phi_2)\circ (h_1\unitalsqcup h_2)$ for all multi-faced unital algebra homomorphisms $h_i\colon\mathcal B_i\to \mathcal A_i$ (universality)
  \item $(\phi_1\odot\phi_2)\odot\phi_3 = \phi_1\odot(\phi_2\odot\phi_3)$ (associativity)
  \item $(\phi_1\odot\phi_2)\restriction \mathcal A_i=\phi_i$ (restriction property).
  \end{itemize}
\end{definition}

As Manzel and Sch{\"u}rmann noticed in \cite[Rem.\ 3.1]{MaSc17}, every universal product $\widetilde\odot$ in the category of multi-faced unital algebras gives rise to a universal product in the sense of \Cref{def:universal-product}, simply putting
\[\varphi_1\odot\varphi_2:=\tilde\varphi_1\mathbin{\widetilde\odot}\tilde\varphi_2\restriction A_1\sqcup A_2\subset \tilde A_1\unitalsqcup\tilde A_2\]
where $\tilde A_i$ denotes the unitization of a multi-faced algebra and $\tilde\varphi_i$ the unital extension of a linear functional.

Conversely, if a universal product $\odot$ in the non-unital case is given, one would like to define
\begin{align}\label{eq:unital-odot-well-defined}
\phi_1\mathbin{\widetilde\odot}\phi_2(p(a)):=\varphi_1\odot \varphi_2(a)
\end{align}
with the following conventions: 
\begin{itemize}
\item $A_\kappa :=\bigsqcup_{\bsquare\in\mathcal F}\mathcal A_\kappa ^{\bsquare}$, so that  
    \(\mathcal A_\kappa \cong A_\kappa /\mathcal I_{\mathcal A_\kappa }\) with \(\mathcal I_{\mathcal A_\kappa }:=\langle 1^{\wsquare} - 1^{\bsquare}:{\wsquare},{\bsquare}\in\mathcal F\rangle\subset A_\kappa\),
  \item $p_{\mathcal A_\kappa }\colon A_\kappa \to\mathcal A_\kappa $ denotes the canonical homomorphism,
  \item $\varphi_\kappa :=\phi_\kappa \circ p_{\mathcal A_\kappa }\colon A_\kappa \to \mathbb C$, i.e.\   \(\varphi_\kappa (a):=\phi_\kappa (a+\mathcal I_{\mathcal A_\kappa })\),
  \item $p\colon A_1\sqcup A_2\to \mathcal A_1\unitalsqcup \mathcal A_2$ denotes the canonical homomorphism.
\end{itemize}

\begin{definition}\label{def:unit-preserving}
  A universal product is \emph{unit preserving} (or \emph{respects units}) if, whenever  $A_1,A_2$ are multi-faced algebras with each $A_i^{\bsquare}$ unital and $\varphi_i$ a linear functional on $A_i$ which vanishes on the ideal $\langle 1_i^{\wsquare}-1_i^{\bsquare}:{\wsquare},{\bsquare}\in\mathcal F\rangle\subset A_i$ and such that $\varphi_i\restriction A_i^{\bsquare}$ is unital for every $\bsquare\in\mathcal F$, then $\varphi_1\odot\varphi_2$ vanishes on the ideal $\langle 1_{i}^{\wsquare} - 1_{j}^{\bsquare}:i,j\in[2],{\wsquare},{\bsquare}\in\mathcal F\rangle\subset A_1\sqcup A_2$ and $\varphi_1\odot\varphi_2\restriction A_i^{\bsquare}$ is unital for every $i\in[2]$, $\bsquare\in\mathcal F$. 
\end{definition}

\begin{remark}
  A multi-faced universal product is unit preserving if and only if \eqref{eq:unital-odot-well-defined} is well-defined, in which case it yields a universal product in the category of multi-faced unital algebras \cite[Rem.\ 3.1]{MaSc17}. Since Manzel and Sch{\"u}rmann do not give a definition of ``respecting units'', let us briefly check that \Cref{def:unit-preserving} captures what they mean.

  Assume that $\odot$ is unit preserving. The $\varphi_i=\phi_i\circ p_{\mathcal A_i}$ in \eqref{eq:unital-odot-well-defined} are linear functionals on $A_i$, vanish on $\operatorname{ker}p_{\mathcal A_i}=\mathcal I_{\mathcal A_i}=\langle 1_i^{\wsquare}-1_i^{\bsquare}:{\wsquare},{\bsquare}\in\mathcal F\rangle\subset A_i$ and fulfill $\varphi_i(1_i^{\bsquare})=\phi_i(1_i)=1$.  Therefore, we may conclude that $\varphi_1\odot\varphi_2$ vanishes on the ideal $\langle 1_{i}^{\wsquare} - 1_{j}^{\bsquare}:i,j\in[2],{\wsquare},{\bsquare}\in\mathcal F\rangle\subset A_1\sqcup A_2$, which coincides with the kernel of the canonical homomorphism $p\colon A_1\sqcup A_2\to\mathcal A_1\unitalsqcup \mathcal A_2$. This means that there is a well-defined linear functional $\phi_1\mathbin{\widetilde\odot}\phi_2$ with $\varphi_1\odot\varphi_2=(\phi_1\mathbin{\widetilde\odot}\phi_2)\circ p$. This functional is also unital because $\phi_1\mathbin{\widetilde\odot}\phi_2(1)=\phi_1\mathbin{\widetilde\odot}\phi_2(p(1_i^{\bsquare}))=\varphi_i(1_i^{\bsquare})=1$.

We leave the rest of the simple, but notationally cumbersome proof of the claim (in particular universality and associativity of $\widetilde\odot$) to the interested reader. 
\end{remark}

In the following we will need often remove a singleton block $\beta=\{s\}$ from a partition $\pi\ni \beta$. While consistent use of notation would dictate to write $\pi\setminus \{\beta\}=\pi\setminus \{\{s\}\}$, we will prefer to write $\pi\setminus \{s\}$ for better legibility.

\begin{definition}[multi-faced version of {\cite[Def.\ 3.2]{DAGSV22}}]
  A family of weights $(\alpha_\pi)_{\pi\in\mathcal P}$ is \emph{singleton inductive} if $\alpha_\pi=\alpha_{\pi\setminus \{s\}}$ for every singleton block $\{s\}\in\pi$.
\end{definition}

\begin{lemma}[multi-faced version of {\cite[Th.\ 3.2]{DAGSV22}}]\label{lem:singleton-inductive-cumulants}
  Let $\alpha$ be monic, singleton inductive weights. Suppose that $A=\bigsqcup_{\bsquare\in\mathcal F}A^{\bsquare}$ is a multi-faced algebra such that each face $A^{\bsquare}$ is unital (with unit $1^{\bsquare}$) and that $\varphi$ fulfills $\varphi(1^{\bsquare})=1$ and $\varphi$ vanishes on the ideal $\langle 1^{\wsquare}-1^{\bsquare}:{\wsquare},{\bsquare}\in\mathcal F\rangle\subset A$. Then \[\log_\alpha \hat\varphi (a_1\otimes\cdots \otimes a_n)=0 \text{ whenever $n>1$ and $a_s=1^{\bsquare}$ for some $s\in [n],\bsquare\in\mathcal F$;}\] here $\hat\varphi$ is the lift of $\varphi$ to a $T_0\left(\bigoplus_{\bsquare\in \mathcal F}A^{\bsquare}\right)$. 
\end{lemma}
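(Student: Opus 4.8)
The plan is to induct on $n$, combining the singleton-inductivity of $\alpha$ with the moment--cumulant relation \eqref{eq:symmetric_moment-cumulant_relation} (equivalently, with the identity $\exp_\alpha\log_\alpha\hat\varphi=\hat\varphi$ of \Cref{def:exp-log} and \Cref{obs:log-cumulants}). Write $c:=\log_\alpha\hat\varphi$ and, for a block $\beta=\{i_1<\cdots<i_r\}$, abbreviate $c(\mathbf a\restriction\beta):=c(a_{i_1}\otimes\cdots\otimes a_{i_r})$. Since $\alpha$ is monic, the recursion for $\log_\alpha$ has an empty correction sum on a single leg, so $c$ agrees with $\hat\varphi$ there; in particular $c(1^{\bsquare})=\hat\varphi(1^{\bsquare})=\varphi(1^{\bsquare})=1$.

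First I would record a moment-level version of the claim: if $a_s=1^{\bsquare}$ and $n\ge 2$, then deleting the leg at position $s$ leaves the moment unchanged. Indeed, using a neighbour of $s$ (the left one if $s\ge 2$, otherwise $a_{s+1}$, which exists because $n\ge 2$) one writes, say, $a_{s-1}1^{\bsquare}=a_{s-1}+a_{s-1}(1^{\bsquare}-1^{\mathbf f(s-1)})$, where $1^{\mathbf f(s-1)}$ is the unit of the face containing $a_{s-1}$; the correction term lies in the ideal $\langle 1^{\wsquare}-1^{\bsquare}\rangle$ on which $\varphi$ vanishes. Hence $\hat\varphi(a_1\otimes\cdots\otimes a_n)=\varphi(a_1\cdots a_n)=\varphi(a_1\cdots\widehat{a_s}\cdots a_n)=\hat\varphi(\mathbf a')$, where $\mathbf a'$ is the tuple with position $s$ removed.

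For the inductive step I would split the moment--cumulant expansion of $\hat\varphi(a_1\otimes\cdots\otimes a_n)$ according to the block $\beta$ of $\pi$ containing $s$. If $\{s\}$ is a singleton block of $\pi$, then $\alpha_\pi=\alpha_{\pi\setminus\{s\}}$ by singleton-inductivity and the factor $c(1^{\bsquare})=1$ disappears; summing over all such $\pi$ reproduces exactly the expansion of $\hat\varphi(\mathbf a')$ over $\mathcal P(\mathbf f')$. If $s$ lies in a block $\beta$ with $2\le|\beta|<n$, the factor $c(\mathbf a\restriction\beta)$ vanishes by the induction hypothesis, since $\mathbf a\restriction\beta$ is a shorter tuple still containing the unit $1^{\bsquare}$. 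The only surviving partition is $\pi=1_{\mathbf f}$, contributing $\alpha_{1_{\mathbf f}}\,c(a_1\otimes\cdots\otimes a_n)=c(a_1\otimes\cdots\otimes a_n)$. Thus $\hat\varphi(\mathbf a)=\hat\varphi(\mathbf a')+c(a_1\otimes\cdots\otimes a_n)$, and comparison with the moment-level identity $\hat\varphi(\mathbf a)=\hat\varphi(\mathbf a')$ forces $c(a_1\otimes\cdots\otimes a_n)=0$. For $n=2$ the middle case is vacuous, so this computation already serves as the base case.

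The individual steps are routine; the point requiring care is the singleton case, where one must verify that singleton-inductivity makes the weights restrict correctly so that the $\{s\}$-singleton terms reassemble into precisely the moment expansion of the shortened tuple $\mathbf a'$, independently of the face $\bsquare$ of the discarded unit (this face-independence is exactly the ideal-vanishing hypothesis on $\varphi$ used in the moment-level step). This interplay between the singleton-inductivity of $\alpha$ and the unit-vanishing of $\varphi$ is the crux of the argument.
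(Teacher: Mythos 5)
Your proof is correct and follows essentially the same route as the paper's: induction on $n$, splitting the moment--cumulant expansion according to the block containing $s$, using singleton inductivity together with $c(1^{\bsquare})=1$ to match the singleton-block terms with the expansion of the shortened tuple, the induction hypothesis to kill blocks of intermediate size, and the ideal-vanishing hypothesis of $\varphi$ for the moment-level deletion identity $\hat\varphi(\mathbf a)=\hat\varphi(\mathbf a')$. The only cosmetic difference is that the paper computes the case $n=2$ separately as an explicit base case, whereas you observe that the general computation already covers it.
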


\begin{proof}
  We prove the claim by induction. For $n=2$ and arbitrary ${\wsquare},{\bsquare}\in\mathcal F$,
  \[\log_\alpha \hat\varphi(1^{\wsquare}\otimes a^{\bsquare})=\varphi(1^{\wsquare}a^{\bsquare})-\varphi(1^{\wsquare}) \varphi(a^{\bsquare})=\varphi(1^{\bsquare}a^{\bsquare})-\varphi(a^{\bsquare})= 0\]
  and analogously $\log_\alpha \hat\varphi(a^{\wsquare}\otimes 1^{\bsquare})=0$. Now assume the statement holds for all $1<m<n$ and consider $a=a_1\otimes\cdots \otimes a_n$ with $a_i\in A^{\mathbf f(i)}$, $a_s=1^{\mathbf f(s)}$. Note that $\varphi(a_1\cdots a_n)=\varphi(a_1\cdots \check a_s\cdots a_n)$ (here $\check a_s$ means omission of the factor) and $\log_\alpha\hat\varphi(a_s)=\log_\alpha\hat\varphi(1^{\bsquare})=\varphi(1^{\bsquare})=1$. We find
  \begin{align*}
    \MoveEqLeft\log_\alpha \hat\varphi(a_1\otimes \cdots \otimes a_n)=\varphi(a_1\cdots a_n)-\sum_{\pi\in \mathcal P(\mathbf f)\setminus \{1_{\mathbf f}\}} \alpha_\pi (\log_\alpha \hat\varphi)^{\otimes |\pi|}(a_\pi)\\
    &=\varphi(a_1\cdots  a_n)-\sum_{\{s\}\in\pi\in \mathcal P(\mathbf f)} \alpha_\pi (\log_\alpha \hat\varphi)^{\otimes |\pi|}(a_\pi)-\underbrace{\sum_{\{s\}\notin\pi\in \mathcal P(\mathbf f)\setminus \{1_{\mathbf f}\}} \alpha_\pi (\log_\alpha \hat\varphi)^{\otimes |\pi|}(a_\pi)}_{=0 \text{ by induction hypothesis}}\\
    &=\varphi(a_1\cdots \check a_s\cdots  a_n)-\sum_{\{s\}\in\pi\in \mathcal P(\mathbf f)} \alpha_{\pi\setminus\{s\}} (\log_\alpha \hat\varphi)^{\otimes |\pi|-1}(a_{\pi\setminus\{s\}})\log_\alpha\hat\varphi(a_s)\\
    &=\varphi(a_1\cdots \check a_s\cdots  a_n)-\sum_{\sigma\in \mathcal P(\mathbf f\restriction [n]\setminus \{s\})} \alpha_{\sigma} (\log_\alpha \hat\varphi)^{\otimes |\sigma|}(a_{\sigma})=0,
  \end{align*}
  where we used that the weights are singleton inductive as well as the moment cumulant relation for $a_1\cdots \check a_s\cdots  a_n$.
\end{proof}

\begin{theorem}\label{prop:unit-preserving}
  For a multi-faced positive symmetric universal product $\odot$, the following are equivalent.
  \begin{enumerate}
  \item $\odot$ is unit preserving,
  \item $\nu_\bsquare=1$ for all $\bsquare\in\mathcal F$,
  \item the highest coefficients of $\odot$ are singleton inductive.
  \end{enumerate}
\end{theorem}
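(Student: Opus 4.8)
The plan is to prove the cyclic chain $(1)\Rightarrow(2)\Rightarrow(3)\Rightarrow(1)$, using the reconstruction formula of \Cref{thm:unique-highest-coefficients} and \Cref{lem:singleton-inductive-cumulants} for the two implications involving unit preservation, and a combinatorial induction for the remaining one. For $(1)\Rightarrow(2)$ I would test unit preservation on a configuration that isolates $\nu_\bsquare$. Fix $\bsquare\in\mathcal F$, let $A_1$ be a multi-faced algebra whose face $A_1^\bsquare$ is unital and carries a (merely linear, not necessarily positive) unital functional $\varphi_1$ with nonzero covariance $\varphi_1(a_1a_3)\neq\varphi_1(a_1)\varphi_1(a_3)$ for some $a_1,a_3\in A_1^\bsquare$ (e.g.\ $A_1^\bsquare=\mathbb C[x]/(x^3)$ with $\varphi_1(x)=0$, $\varphi_1(x^2)=1$), the other faces trivial; let $A_2$ have unital face $A_2^\bsquare$ with a unital $\varphi_2$. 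Applying \Cref{thm:unique-highest-coefficients} to $a=a_1\cdot 1_2^\bsquare\cdot a_3$, whose block structure is $(1,2,1)$, only the two partitions adapted to this block structure survive, namely $\{\{1,3\},\{2\}\}$ with weight $\nu_\bsquare$ and the all-singleton interval partition with weight $1$ (by \Cref{lem:admissible-weights-simple-properties}\Cref{it:interval-coefficients}); using $\log\hat\varphi_2(1_2^\bsquare)=1$ this gives
\[\varphi_1\odot\varphi_2(a)=\nu_\bsquare\bigl(\varphi_1(a_1a_3)-\varphi_1(a_1)\varphi_1(a_3)\bigr)+\varphi_1(a_1)\varphi_1(a_3).\]
On the other hand, unit preservation lets us replace $1_2^\bsquare$ by $1_1^\bsquare$, after which $a_1\cdot 1_1^\bsquare\cdot a_3=a_1a_3\in A_1^\bsquare$ and the restriction property gives $\varphi_1\odot\varphi_2(a)=\varphi_1(a_1a_3)$. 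Comparing forces $(\nu_\bsquare-1)\bigl(\varphi_1(a_1a_3)-\varphi_1(a_1)\varphi_1(a_3)\bigr)=0$, hence $\nu_\bsquare=1$.

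For $(3)\Rightarrow(1)$ I would feed \Cref{lem:singleton-inductive-cumulants} into the reconstruction formula. Given $\varphi_1,\varphi_2$ unital on faces and vanishing on the respective unit ideals, \Cref{lem:singleton-inductive-cumulants} shows $\log\hat\varphi_\kappa$ annihilates every tensor of length $>1$ containing a unit. Inserting a unit $a_s=1^\bsquare$ into $a=a_1\cdots a_n$ and expanding $\varphi_1\odot\varphi_2(a)=\exp(\log\hat\varphi_1\oplus\log\hat\varphi_2)(a_1\otimes\cdots\otimes a_n)$, every partition in which $s$ lies in a block of size $>1$ drops out (cross-algebra blocks vanish by the extension-by-zero convention, and one-algebra blocks of size $>1$ containing a unit vanish by the lemma), so only partitions with $\{s\}$ a singleton contribute. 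Singleton inductivity $\alpha_\pi=\alpha_{\pi\setminus\{s\}}$ then telescopes the sum, exactly as in the proof of \Cref{lem:singleton-inductive-cumulants}, to $\varphi_1\odot\varphi_2(a_1\cdots\check a_s\cdots a_n)$, a value independent of which unit was inserted. Combined with unitality on faces (immediate from the restriction property), this is precisely \Cref{def:unit-preserving}.

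The main obstacle is $(2)\Rightarrow(3)$: upgrading $\nu_\bsquare=1$ to singleton inductivity for \emph{all} partitions, including those with crossings and with nested singletons whose enclosing legs have different colors. I would prove $\alpha_\pi=\alpha_{\pi\setminus\{s\}}$ for every singleton block $\{s\}$ by induction on the number of legs $n$, distinguishing whether $s$ is outer or inner. If $s$ is outer then no block straddles it, so $\pi$ is a concatenation $\pi'\cdot\{s\}\cdot\pi''$ and \Cref{lem:admissible-weights-simple-properties}\Cref{it:concat} together with $\alpha_{\{s\}}=1$ gives the claim (this case needs no hypothesis on $\nu$). If $s$ is inner then $3\le n$ and $2\le s\le n-1$; the base case $n=3$ forces $\pi=\{\{1,3\},\{2\}\}$, the defining configuration of $\nu_{\mathbf f(s)}$, so $\alpha_\pi=\nu_{\mathbf f(s)}=1=\alpha_{\pi\setminus\{s\}}$.

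For the inductive step ($n\ge4$, $s$ inner) I would peel off the outermost structure from a side not occupied by $s$, which is possible since $2\le s\le n-1$. After matching the color of the extremal leg to that of its neighbor via \Cref{it:highest-coeff:change-color}, either the two are in the same block and \Cref{it:highest-coeff:red} merges them, or they lie in different blocks $\beta_1,\beta_2$ and \Cref{it:highest-coeff:split} yields $\alpha_\pi=\alpha_{\pi_{\beta_1\smile\beta_2}}\cdot\alpha_{\{\beta_1,\beta_2\}}$; in both cases one lands on a partition with strictly fewer legs still containing the singleton $\{s\}$. Performing the \emph{identical} peeling on $\pi\setminus\{s\}$ produces the same factor $\alpha_{\{\beta_1,\beta_2\}}$, which then cancels when comparing $\alpha_\pi$ and $\alpha_{\pi\setminus\{s\}}$, and the induction hypothesis applied to the reduced partition closes the argument. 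The delicate points to verify are that the peeling keeps $\{s\}$ a singleton and strictly decreases the leg count, and that one must peel away from $s$: peeling \emph{into} $s$ would apply \Cref{it:highest-coeff:split} with $\beta_2=\{s\}$ and become circular, which is exactly why the nested-singleton input $\nu=1$ can only be consumed in the isolated base case $n=3$.
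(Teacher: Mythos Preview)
Your argument is correct and follows essentially the same route as the paper: the same test configuration for $(1)\Rightarrow(2)$, an induction for $(2)\Rightarrow(3)$ (the paper inducts on the number of blocks and states the cleaner intermediate identity $\alpha_\pi=\nu_\bsquare\cdot\alpha_{\pi\setminus\{s\}}$, but the substance is the same), and \Cref{lem:singleton-inductive-cumulants} combined with the reconstruction formula for $(3)\Rightarrow(1)$. One small imprecision to fix in your inductive step: after applying \cref{it:highest-coeff:split}, the partition $\pi_{\beta_1\smile\beta_2}$ still has $n$ legs, not fewer; you need to follow up with \cref{it:highest-coeff:red} (the first two legs are now in the same block and the same face) to actually drop a leg before invoking the induction hypothesis.
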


\begin{proof}
  Let $\odot$ be unit preserving. To calculate $\nu_\bsquare=\alpha\bigl(\sqnestbullet\bigr)$, we can assume that also the extremal legs are $\bsquare$-legs. We can therefore ignore the faces and calculate, as in the single-faced case, \[\varphi_1\odot\varphi_2(aba')=\nu_\bsquare \cdot\varphi_1(aa')\varphi_2(b)+\gamma\cdot \varphi_1(a)\varphi_1(a')\varphi_2(b)\] for all $a,a'\in A_1^{\bsquare}, b\in A_2^{\bsquare}$, with some universal constant $\gamma\in\mathbb C$. Suppose that $\varphi_1,\varphi_2$ are as in \Cref{def:unit-preserving} and furthermore $b=1_2^{\bsquare},\varphi_1(a)=\varphi_1(a')=0,\varphi_1(aa')=1$, then \[\varphi_1\odot\varphi_2(a1_2^{\bsquare}a')=\varphi_1\odot\varphi_2(a1_1^{\bsquare}a')=\varphi_1\odot\varphi_2(aa')=\varphi_1(aa')=1\]
  because $\odot$ preserves units. We also have $\varphi_2(b)=\varphi_2(1_2^{\bsquare})=1$. Putting everything together, $\nu_\bsquare=1$.   

  A simple induction on the number of blocks shows that $\alpha_\pi=\nu_{\mathbf \bsquare}\cdot\alpha_{\pi\setminus\{s\}}$ whenever $\pi\in \mathcal P$ has a singleton block $\{s\}\in\pi$ of color $\bsquare$. Therefore, $\nu_\bsquare=1$ for all $\bsquare\in\mathcal F$ implies that the highest coefficients are singleton inductive.
  
  Now assume that the highest coefficients of a positive symmetric universal product are singleton inductive.
  Let $A_1,A_2$ be multi-faced algebras with unital faces, $\mathbf s=\mathbf b\times \mathbf f\in([2]\times \mathcal F)^{n}$, $a_\ell\in A_{\mathbf b(\ell)}^{\mathbf f(\ell)}$ for $\ell\in[n]$, $\hat a=a_1\otimes \cdots \otimes a_n$, $a=a_1\cdots a_n$ and $a_s=1_{i}^{\bsquare}$. Then, with $\log:=\log_\odot$,
  \begin{align*}
    \varphi_1\odot\varphi_2(a)=\sum_{\pi\in\mathcal P(\mathbf f)} \alpha_\pi(\log\hat\varphi_1\oplus\log\hat\varphi_2)^{\otimes |\pi|}(\hat a_\pi)
    = \sum_{\{s\}\in\pi\in\mathcal P(\mathbf f)} \alpha_\pi(\log\hat\varphi_1\oplus\log\hat\varphi_2)^{\otimes |\pi|}(\hat a_\pi)
  \end{align*}
  by \Cref{lem:singleton-inductive-cumulants}. Because $\alpha$ is singleton inductive, $\alpha_\pi=\alpha_{\pi\setminus\{s\}}$. Also, for any $\pi$ with $\{s\}\in\pi$, we have \[(\log\hat\varphi_1\oplus\log\hat\varphi_2)^{\otimes |\pi|}(\hat a_\pi)=(\log\hat\varphi_1\oplus\log\hat\varphi_2)^{\otimes |\pi\setminus\{s\}|}(\hat a_{\pi\setminus\{s\}})\underbrace{\log \hat\varphi_{\mathbf b(s)}(a_s)}_{=1}.\]
  Therefore, $\varphi_1\odot\varphi_2(a_1\cdots a_n)=\varphi_1\odot\varphi_2(a_1\cdots \check{a}_s\cdots a_n)$. This calculation works for any $i\in[2]$ and any $\bsquare\in\mathcal F$, so the statement follows. 
\end{proof}

\begin{corollary}
  A 2-faced positive symmetric universal product is unit preserving if and only if its associated set of partitions contains $\mathrm{pNC_{\wcol\bcol}}$.
\end{corollary}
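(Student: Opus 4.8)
The plan is to read this off \Cref{prop:unit-preserving} together with the lattice structure from \Cref{thm:classification-two-faced-admissible-sets}. By \Cref{prop:unit-preserving}, the positive symmetric product $\odot$ is unit preserving if and only if $\nu_\bsquare=1$ for every $\bsquare\in\mathcal F=\{\wcol,\bcol\}$, i.e.\ exactly when $\nu_\wcol=\alpha\bigl(\sqnestcirc\bigr)=1$ and $\nu_\bcol=\alpha\bigl(\sqnestbullet\bigr)=1$. So the first step is purely to translate the condition ``$\nu_\bsquare=1$ for both faces'' into a statement about the associated set $\Pi_\odot=\{\pi:\alpha_\pi\neq0\}$, which by \Cref{cor:classification} is one of the twelve admissible sets and in particular is itself admissible.

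For this translation I would invoke \Cref{lem:relations-for-basic-coefficients}(1), which gives $\nu_\bsquare\in\{0,1\}$. Hence $\nu_\bsquare=1$ is equivalent to $\nu_\bsquare\neq0$, and by the very definition of $\Pi_\odot$ this means that the corresponding nesting partition lies in $\Pi_\odot$: precisely, $\nu_\wcol=1\iff\sqnestcirc\in\Pi_\odot$ and $\nu_\bcol=1\iff\sqnestbullet\in\Pi_\odot$. Combining with the previous paragraph, $\odot$ is unit preserving if and only if $\{\sqnestcirc,\sqnestbullet\}\subseteq\Pi_\odot$.

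The final step is to recognise $\{\sqnestcirc,\sqnestbullet\}\subseteq\Pi_\odot$ as equivalent to $\mathrm{pNC_{\wcol\bcol}}\subseteq\Pi_\odot$. \Cref{fig:hasseset} records that $\mathrm{pNC_{\wcol\bcol}}=\langle\nestcirc,\nestbullet\rangle$, i.e.\ $\mathrm{pNC_{\wcol\bcol}}$ is the \emph{minimal} admissible set containing both nesting partitions. Since $\Pi_\odot$ is admissible, it contains $\sqnestcirc$ and $\sqnestbullet$ if and only if it contains the whole admissible set they generate, namely $\mathrm{pNC_{\wcol\bcol}}$; the reverse implication is trivial because $\sqnestcirc,\sqnestbullet\in\mathrm{pNC_{\wcol\bcol}}$. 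Chaining the three equivalences yields the claim.

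The whole argument is essentially bookkeeping over facts already established, so no genuinely hard step remains. The one point deserving a sentence of justification is the generation statement $\mathrm{pNC_{\wcol\bcol}}=\langle\nestcirc,\nestbullet\rangle$, which I would simply cite from \Cref{thm:classification-two-faced-admissible-sets} and \Cref{fig:hasseset} rather than re-derive; if one wanted a self-contained argument one could instead check directly from \Cref{def:admissible-sets-concrete} that a pure noncrossing partition is exactly one producible from $\nestcirc$ and $\nestbullet$ via the closure operations of \Cref{obs:operations-for-partitions}, but this is precisely the content already packaged into the classification.
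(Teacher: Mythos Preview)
Your proposal is correct and matches the paper's intended reasoning: the corollary is stated in the paper without proof, as an immediate consequence of \Cref{prop:unit-preserving} together with the identification $\mathrm{pNC_{\wcol\bcol}}=\langle\nestcirc,\nestbullet\rangle$ from \Cref{thm:classification-two-faced-admissible-sets}/\Cref{fig:hasseset}. Your unpacking via \Cref{lem:relations-for-basic-coefficients}(1) and the admissibility of $\Pi_\odot$ is exactly the argument one would supply if asked to write it out.
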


\section{Summary and outlook}
\label{sec:outlook}

We found conditions on weights that are necessarily satisfied by the highest coefficients of a positive two-faced universal product. In the symmetric case, we showed that weights which fulfill these conditions are always the highest coefficients of a uniquely determined universal product. We could also determine all families of weights which fulfill these conditions, thereby providing a list of candidates for positive symmetric universal products.   

We hope that the methods developed in this work will eventually lead to a complete classification of positive multi-faced universal products. To that end, the following problems will have to be overcome:

\begin{itemize}
\item Prove or disprove positivity of the ``exceptional cases'' which do not admit a representation on free or tensor product.  
\item Extend the classification of admissible weights to more than two faces. 
\item Extend the classification of admissible weights to the non-symmetric case.
\item Extend the reconstruction theorem to the non-symmetric case. This might be significantly more difficult because the cumulants have to be combined using the Campbell-Baker-Hausdorff formula instead of just the direct sum.  
\end{itemize}

\appendix

\section{Comparison with \texorpdfstring{\cite{Var21}}{[Var21]}}
\label{sec:comparison}

Most ideas behind the proofs in \Cref{sec:necessary-conditions,sec:classification,sec:reconstruction} go more or less back to \cite{Var21}.  A crucial difference between this article and the exposition in \cite{Var21} is that our main results are consistently formulated and proved for families of weights on partitions, while Var\v{s}o often works with sets of partitions instead, which means that in \cite{Var21} several results are only proved in the \emph{combinatorial} case in the sense of \Cref{def:combinatorial}. The weight-based approach often helped us to streamline proofs. Another difference is that we decided to focus on \emph{positive} universal products here. 

In the following we give a more detailed comparison of the results.
\begin{itemize}
\item \Cref{thm:properties-of-highest-coeffs} (iv) is basically \cite[Corollary 5.2.6]{Var21}. The remaining claims of \Cref{thm:properties-of-highest-coeffs} generalize \cite[Theorem 5.2.17]{Var21} to possibly non-symmetric universal products. Because we put more emphasis on positive products, for ease of reading, we only formulated \Cref{thm:properties-of-highest-coeffs} for positive products while Var\v{s}o formulates his results more generally for products with the ``right ordered monomials property'', i.e.\ those products for which the conclusion of \Cref{thm:MS:highest-coefficients} holds; however, we mention in the proof where exactly the positivity condition is used and where the right-ordered monomials property is enough.
\item \Cref{lem:coincide-on-2block} is closely related to \cite[Theorem 5.2.20]{Var21} (since admissible families of weights are not defined in \cite{Var21}, the statement is formulated for families of highest coefficients of certain universal products). \Cref{lem:relations-for-basic-coefficients} has overlap with \cite[Lemma 5.2.23]{Var21}; however, from \Cref{lem:relations-for-basic-coefficients} (3) it follows that all coefficients have absolute value in $\{0,1\}$, which goes beyond what was found in \cite{Var21}. Regarding the main classification results,
  \Cref{thm:classification-two-faced-admissible-sets} corresponds to \cite[Theorem 4.2.44]{Var21} and \Cref{cor:classification} strengthens \cite[Remark 5.2.28]{Var21}.
\item \Cref{obs:operations-for-partitions} draws the connection between the admissible sets of partitions as defined from admissible weights in \Cref{def:admissible,def:admissible-partitions} and Var\v{s}o's ($m$-colored) universal classes of partitions \cite[Definition 3.4.9]{Var21}. The only difference is that admissible sets are assumed to contain the interval partitions, while a universal class of partitions is also allowed to consist of the 1-block partitions alone. 
\item Our reconstruction theorem, \Cref{thm:reconstruction}, also covers universal products with non-0-1 highest coefficients, in contrast to \cite[Theorem 3.4.32]{Var21}.  The crucial \Cref{lem:cumulants-of-product} corresponds to \cite[Lemma 3.4.24]{Var21} (formulated and proved for admissible weights instead of universal classes of partitions).
\end{itemize}

\Cref{thm:combinatorial-mixed-moments} and the results of \Cref{unit-preserving} have no counterpart in \cite{Var21}.

\section*{Acknowledgements}

We are grateful to Michael Sch{\"u}rmann for numerous fruitful discussions in the course of this research. MG thanks Moritz Weber and Roland Speicher for stimulating discussions and atmosphere during his stay in Saarbr{\"u}cken for the focus semester on quantum information Autumn 2022. We thank both anonymous referees for their comments, which helped us improve the quality of this article. We truly appreciate one of the referees extraordinary detailed reading and in particular his thoughtful suggestions regarding notation.

%%% Local Variables:
%%% mode: latex
%%% TeX-master: "main"
%%% ispell-local-dictionary: "english"
%%% End:

\bibliographystyle{myalphaurl-sortbyauthor}
\bibliography{biblio}
\setlength{\parindent}{0pt}
\end{document}